\newtheorem{thm}{Theorem}[section]
\newtheorem{cor}{Corollary}[section]
\newtheorem{lemma}{Lemma}[section]
\newtheorem{prob}{Problem}[section]
\newtheorem{assumption}{Assumption}[section]
\newtheorem{remark}{Remark}[section]
\newtheorem{defn}{Definition}[section]
\newtheorem{example}{Example}[section]
\newcounter{nextauthor}
\def\mathrm{\mbox}
\begin{document}
\title{{\Large \bf  Nash Equilibria of   Noncooperative/Mixed  Differential Games  with Density Constraints in Infinite Dimensions}\thanks{This work was supported by the National Natural Science Foundation of China (12171339), the Scientific and Technological Research Program of Chongqing Municipal Education Commission (KJQN202400819), and the grant from Chongqing Technology and Business University (2356004).}}
\author[a]{Zhun Gou}
\author[b]{Nan-Jing Huang \thanks{Corresponding author: nanjinghuang@hotmail.com; njhuang@scu.edu.cn}}
\author[c]{Jian-Hao Kang}
\author[d]{Jen-Chih Yao}
\affil[a]{\small\it School of Mathematics and Statistics, Chongqing Technology and Business University, Chongqing 400067, P.R. China; Chongqing Key Laboratory of Statistical Intelligent Computing and Monitoring, Chongqing Technology and Business University, Chongqing, 400067, P.R. China}
\affil[b]{Department of Mathematics, Sichuan University, Chengdu, Sichuan 610064, P.R. China}
\affil[c]{School of Mathematics, Southwest Jiaotong University, Chengdu, Sichuan 610031, P.R. China}
\affil[d]{Research Center for Interneural Computing, China Medical University Hospital, China Medical University, Taichung, Taiwan 40402, P.R. China}
\date{}
\maketitle
\begin{center}
\begin{minipage}{5.6in}
\noindent{\bf Abstract.} Motivated by Cournot models, this paper proposes novel models of the noncooperative and cooperative  differential games with density constraints in infinite dimensions, where markets consist of infinite firms and demand dynamics are governed by controlled differential equations. Markets engage in noncooperative competition with each other, while firms within each market engage in noncooperative or cooperative games. The main problems are to find the noncooperative  Nash equilibrium   (NNE) of the noncooperative differential game and the mixed Nash equilibrium (MNE) of the mixed noncooperative and cooperative differential game. Moreover, fundamental relationship is established between noncooperative/mixed differential game with density constraints and  infinite-dimensional differential variational inequalities with density constraints. By variational analysis, it is proved under two   conditions  with certain symmetry that both of the two equilibrium problems can be reduced to solving systems of finite-dimensional projection equations with integral constraints by iterative computational methods.   Crucially, the two   conditions  with certain symmetry, ensuring the uniqueness of the NNE and the MNE, provide theoretical foundations for strategic decision making regarding competitive versus cooperative market behaviors.  Finally, the theoretical framework is validated through numerical simulations demonstrating the  efficacy of our results.
\\ \ \\
{\bf Keywords:} Game theory; Differential game; Noncooperative Nash equilibrium; Mixed Nash equilibrium; Differential variational inequality.
\\ \ \\
{\bf 2020 Mathematics Subject Classification}: 90C20, 90C33, 91A07, 91A23, 91B54.
\end{minipage}
\end{center}

\section{Introduction}
Denote $N=\{1,2,\cdots,n\}$ and $L=\{1,2,\cdots, l\}$. For a given  Hilbert space $\mathcal{H}$, the following notations will be used throughout this paper.

    \begin{itemize}

\item     $c_1\vee c_2$: $\max\{c_1,c_2\}$, where $c_1,c_2\in \mathbb{R}$;

 \item       $c_1\wedge c_2$: $\min\{c_1,c_2\}$, where $c_1,c_2\in \mathbb{R}$;

   \item          $(c_1,\cdots,c_n)^{\top}$:   the transpose of the row vector $(c_1,\cdots,c_n)$;

   \item        $I_{S}(\cdot)$: the indicator function on the set $S$;

    \item       $H=L^2([a,b],\mathbb{R})$: the Hilbert space of all square integrable functions $g:[a,b]\rightarrow\mathbb{R}$ with $\|g\|_H=(\int_a^bg^2(x)dx)^{\frac{1}{2}}<+\infty$, where  $a<b$;

\item      $H^n$: the Hilbert space of all  vectors $\mathbf{g}=(g_1,\cdots,g_n)^{\top}$ ($g_i\in H$, $i\in N$) with $\|\mathbf{g}\|_{H^n}^2=\sum_{i\in N}\|g_i\|^2_H<+\infty$;

     \item  $H_T=L^2([s,T],H)$: the Hilbert space of all square integrable functions $g:[a,b]\rightarrow H$ with $\|g\|_{H_T}=(\int_s^T\|g\|^2_Hdt)^{\frac{1}{2}}<+\infty$, where  $a<b$
           and $0\leq s<T$;

 \item     $\langle g_1,g_2\rangle_\mathcal{H}$: the inner product on $\mathcal{H}$, where $g_1,g_2\in \mathcal{H}$;

  \item    $\mathcal{L}(\mathcal{H})$: the Banach space of all linearly bounded operator $G:\mathcal{H}\rightarrow \mathcal{H}$  with $\|G\|_{\mathcal{L}(\mathcal{H})}=\sup_{g\in \mathcal{H}}\frac{\|Gg\|_\mathcal{H}}{\|g\|_\mathcal{H}}<+\infty$;

  \item    $\mathcal{H}'$: the dual space of $\mathcal{H}$, usually $\mathcal{H}'=\mathcal{H}$;

 \item     $C([s,T],\mathcal{H})$: the Banach space of all continuous functions $g:[s,T]\rightarrow \mathcal{H}$ with 
$\|g\|_{C([s,T],\mathcal{H})}=\sup_{t\in[s,T]}\|g(t)\|_{\mathcal{H}}<+\infty$, where $0\leq s<T$;

\item   $C([s,T],\mathcal{L}({\mathcal{H}}))$: the Banach space of all continuous functions $g:[s,T]\rightarrow \mathcal{L}({H})$  with 
$\|g\|_{C([s,T],\mathcal{L}({H}))}=\sup_{t\in[s,T]}\sup_{\eta\in H,\|\eta\|_H\neq0}\frac{\|g(t)\eta\|_H}{\|\eta\|_H}<+\infty$, where $0\leq s<T$. 
\end{itemize}

Based on the foundational framework established by Cournot's original model \citep{cournot1838recherches}, Cournot competition becomes one of the most important noncooperative games and its theory has undergone substantial expansion over last two centuries to address increasingly complex oligopoly market dynamics.   A significant development lies in the parallel investigation of discrete-time \citep{acciaio2021cournot, backhoff2023dynamic, MR2917530, wang2024stability} and continuous-time \citep{MR4815584, gori2015continuous, martinez2022payoff, MR4426096} dynamic game formulations.  Meanwhile, the Cournot competition paradigm involving an infinite number of firms has emerged as an intriguing area of research with distinct analytical challenges.  As the seminal work, \citet{puu2008stability} obtained the stability  of the static Cournot Equilibrium when the number of firms increases, while subsequent studies by  \citet{chan2015bertrand},   \citet{MR3755719}, and  \citet{graber2021nonlocal} advanced mean field differential game approaches to characterize dynamic equilibria in    Cournot competitions, and obtained the existence of the equilibria.  However, the study of the dynamic Cournot competition is, in our opinion, still at its beginning stage. Motivated by \citep[Example 3.1.3]{MR2560519}, the following example provides a fresh perspective for the dynamic Cournot competition with infinite firms in differential game setting.

\begin{example}\label{75}
 Consider  a unit mass population of firms that choose production quantity  $x$ from the set $[a,b]\subseteq\mathbb{R}$ at time $t\in[0,T]$. For each $x\in [a,b]$,  denote by $u(t,x)$  the  density of the firms choosing production quantity $x$ at   time $t$, which satisfies $u(\cdot,\cdot)\in H_T$, $\int_a^bu(t,x)dx=1$ for any $t\in[0,T]$, and $u(t,x)\geq0$ for a.e.  $x\in [a,b]$ and  a.e. $t\in[0,T]$. Obviously, $u(t,\cdot)$ is a a probability density function on $[a,b]$ at any time $t$.  For given $g(\cdot)\in H$, let $(Fg)(x)=\int_a^bxg(x)dx$ ($x\in[a,b]$), then $F$ can be regarded as an operator in $\mathcal{L}(H)$. Thus, the firms' aggregate production at each time can be given by $F(u(t,\cdot))=\int_a^bxu(t,x)dx$.  Denote $X(t,\cdot)$  the demand at time $t$  and $X(0,\cdot)=\xi(\cdot)$ the initial demand, and suppose that the variation in demand is linear with respect to the total  demand  and the total aggregate production in time duration $[t,t+\triangle]$ with $\triangle>0$, i.e.,
$X(t+\triangle,\cdot)-X(t,\cdot)=\triangle[\varsigma_1X(t,\cdot)+\varsigma_0(t,\cdot)] -\varsigma_2\triangle F(u(t,\cdot)),$
where $\varsigma_1$ and $\varsigma_2$  are all given constants, and   $\varsigma_0(\cdot,\cdot)\in C([s,T],H)$ is a  given function.  Taking limit $\triangle \rightarrow0$, then   $X(t,\cdot)\in H$ satisfies  the following  dynamic system:
\begin{equation}\label{54}
dX(t,\cdot)=[\varsigma_1X(t,\cdot)-\varsigma_2F(u(t,\cdot))+\varsigma_0(t,\cdot)]dt,\quad 
X(0,\cdot)=\xi(\cdot),
\end{equation}
which can be  uniquely determined by given $u(\cdot,\cdot)$. Denote by  $x\varsigma_7(t,x)$  the  firms' production cost function of choosing production quantity $x$ at time $t$, where $\varsigma_7(t,x)$ ($\varsigma_7(\cdot,\cdot)\in C([s,T],H)$ is the unit cost for each production quantity $x$ at time $t$; and further denote by $p(t,x)$  the market price function (inverse demand function) of production quantity $x$ at time $t$, which takes the linear form with respect to $u(t,\cdot)$ and $X(t,\cdot)$ with form: $p(t,\cdot)=\varsigma_3X(t,\cdot)+\varsigma_4F(u(t,\cdot))+\varsigma_5u(t,\cdot)+\varsigma_6(t,\cdot)$,
where $\varsigma_3$, $\varsigma_4$, $\varsigma_5$  are all given constants, and $\varsigma_6(\cdot,\cdot)\in C([s,T],H)$ is a given function.  

In some situations, one should require $a>0$, $\varsigma_2>0$, $\varsigma_3\geq0$ and $\varsigma_4\leq0$. Under mild conditions, we can obtain $p(\cdot,\cdot)\in H_T$.  For given $g(\cdot)\in H$, let   $(W_1g)(x)=xg(x)$ for all $x\in[a,b]$. Then $W_1$ can be regarded as an operator in $\mathcal{L}(H)$ and the payoffs under density $u(\cdot,\cdot)$ is given by
$V(\cdot)=\int_0^T W_1[p(t,\cdot)-\varsigma_7(t,\cdot)]dt.$
Thus, $V(\cdot)\in H$  for given $u(\cdot,\cdot)$ and $V(x)$ stands for the payoffs of production quantity $x\in [a,b]$. The firms aim to find the proper density $\widehat{u}(\cdot,\cdot)$ under suitable definitions of the equilibria, which will be introduced later. Such a density $\widehat{u}(\cdot,\cdot)$ is called the optimal strategy of the firms.
\end{example}

It is noteworthy that in Example \ref{75}, the linearities of both the demand function and the price function remain a commonly employed assumption in the existing literature (see \citep{cai2019role, chan2015bertrand, MR4237615} and the rich references therein)  because of  the following two main reasons: (i) Nonlinear demand  functions and nonlinear price functions can be reasonably approximated by their linear counterparts in numerous scenarios; (ii) The inherent simplicity and well-structured nature of linear demand and linear price function are anticipated to yield elegant and analytically tractable solutions for dynamic Cournot competition.

In contrast to the existing literature, there are three distinctive features in Example \ref{75}: (i) The market comprises an infinite number of firms, and the  firms select a density   $u(t,\cdot)$ instead of production quantity $x(t)$ at time $t$; (ii) The  demand is determined by the dynamic system \eqref{54}, which is indeed a controlled infinite-dimensional   differential equation;   (iii) Both the demand  and the payoffs  must be analyzed within an infinite-dimensional setting, as  one can check that   $F\in\mathcal{L}(H) $, $W_1\in\mathcal{L}(H)$, and $X(\cdot,\cdot)\in C([s,T],H)$ under mild conditions. However, in general it is really hard or even impossible to deal with the equilibrium problems  in which the integral constraint $\int_a^bu(t,x)dx=1$ is required to hold for any $t\in[0,T]$ (see Remark \ref{80} for explanation).  Thus one should opt for the alternative,    the piecewise constant density $u(\cdot,\cdot)$, i.e., the firms are allowed to change their strategies at finite moments $T_1,\cdots,T_l$ ($s=T_0<T_1<\cdots< T_l=T$), according to which the time interval $[s,T]$ is separated into several small time intervals $(T_{m-1},T_m]$ ($m\in L$), and the density $u(t,\cdot)$ remains  independent of $t$ on each small time interval. To this end, this paper first studies the equilibrium problems on single time interval $[s,T]$ with constant density $u(\cdot,\cdot)\in H$ and then extends the  results to the  case of piecewise constant density.

Next, focusing on single time interval $[s,T]$, let us consider the generalized case  of $n$ markets and denote $\widehat{u}(\cdot)=(\widehat{u}_1(\cdot),\cdots,\widehat{u}_n(\cdot))^{\top}$  the optimal strategy of the firms.  Markets engage in noncooperative competition with each other, and  $\{\widehat{u}_i(\cdot)\}_{i=1}^n$ collectively form a Nash equilibrium, i.e.,  the strategy $\widehat{u}_i(\cdot)$ in the $i$-th market should be chosen based on the strategies $\widehat{u}_j(\cdot)$ $(j\neq i)$ from other $n-1$ markets. While noncooperative  games are formed in each market, i.e., in the $i$-th market the firms aim to maximize the payoffs $V(x)$ of each production quantity $x\in [a,b]$ by density $\widehat{u}_i(\cdot)$ in some sense. In this setting, Example \ref{75} becomes the  infinite-dimensional dynamic Cournot competition, which is indeed an infinite-dimensional noncooperative differential  game (ID-NDG).  On the other hand, in each market the firms  may cooperate   to maximize the total profit in some cases  \citep{MR4704225, MR4550673}, i.e., the firms in the $i$-th market would like to choose the proper density $\widehat{u}_i(\cdot)$  to maximize the WAP. In this setting, cooperative manner is formed in each market, and such a game is indeed an infinite-dimensional  mixed differential game (ID-MDG).  In the current paper, the NNE   and the MNE are specified as  follows, respectively.
 \begin{itemize}
 \item ID-NDG: the NNE $\widehat{u}(\cdot)$  can be defined as the one that for any $i\in N $ and a.e. $x\in[a,b]$, $\widehat{V}_i(x)\geq\int_a^b\widehat{u}(x)\widehat{V}_i(x)dx$ ($\widehat{V}_i(\cdot)$ is the  payoffs of the $i$-th markets under the density $\widehat{u}(\cdot)$) if $\widehat{u}_i(x)\neq0$, i.e., under the NNE, in the $i$-th market the  payoffs  of    production quantity $x$   is  never less  than the weighted aggregate payoffs (WAP)      $\int_a^bu_i(x)V_i(x)dx=\langle u_i(\cdot),V_i(\cdot)\rangle_H$, provided that the density of firms choosing strategy $x$ is non-zero. In this case, the  definition extends the Nash equilibrium of the population game (see \citep{como2020imitation, mertikopoulos2018riemannian, MR2560519} for details) to the infinite-dimensional setting, which considers the comparing mentality of the firms \citep{Espinosa2015Optimal, Lacker2020Many}.

\item    ID-MDG: the objective of the firms is to find an MNE $\widehat{u}(\cdot)$ that maximizes  the WAP $\int_a^bu_i(x)V_i(x)dx$ of the $i$-th each market ($\forall i\in N$). It is well-known that   MNE  is another important concept in game theory. And in this case, finding an MNE of the ID-MDG is closely related to solving an infinite-dimensional linear-quadratic optimal control problem (see \citep{MR4572454, MR4363403} and the rich references therein).
 \end{itemize}

With the help of variational analysis, it will be shown in current paper that searching for the NNE of the ID-NDG (respectively, the MNE of the ID-MDG) is equivalent to solving the system of infinite-dimensional differential variational inequality (ID-DVI), in which the optimal pair $(\widehat{X}(\cdot),\widehat{u}(\cdot))$ is governed by an infinite-dimensional differential equation  as well as an infinite-dimensional variational inequality (ID-VI) with density constraints: (i)  an integral constraint $\int_a^bu(x)dx=1$, interpreted as an infinite-dimensional constraint due to its formulation as an inner product in a Hilbert space; (ii) a value constraint $u(x)\geq0$ a.e.  $x\in[a,b]$, representing a finite-dimensional pointwise restriction. Differential variational inequalities (DVIs), first introduced by \citet{MR2375486}, have emerged as a powerful framework for modeling complex systems in many fields, such as engineering, economics, and finance. Subsequent research has extensively explored DVIs in both finite-dimensional settings \citep{MR4512275, MR3091366, MR3232620, MR4248266, MR4676612, MR4561065} and infinite-dimensional contexts \citep{MR4924761, MR3070101, MR4596385, MR3670043, MR3581523, MR4408107}. However, ID-DVIs with density constraints remain understudied, posing significant analytical and computational challenges. Such density constraints complicate the application of classical variational analysis, necessitating novel approaches to ensure well-posedness and tractability. Thus the first purpose of this paper is to ensure the solvability of ID-DVI with density constraints under mild conditions and provide iterative method for solving its solution.

One important issue of studying   the ID-NDG and the ID-MDG, after achieving the existence and uniqueness of their Nash equilibria, is the stability  of the Nash equilibria. Roughly, the NNE of the ID-NDG (respectively, the MNE of the ID-MDG) is stable if it is  insensitive for small changes of the initial value or the parameters contained in the differential equation, the payoffs  and the constraints. While stability analysis for classical Cournot equilibria has matured significantly (see \citep{MR2917530, puu2008stability, wang2024stability, MR4426096}), the stability of equilibria in infinite-dimensional games remains less explored.  As is aforementioned   that  searching for the NNE of the ID-NDG (respectively, the MNE of the ID-MDG) is equivalent to solving the system of  ID-DVI, thus the stability   of the NNE of the ID-NDG (respectively, the MNE of the ID-MDG) can be equivalently characterized through  the stability of solutions to the ID-DVI with density constraints. Existing literature on ID-DVI stability remains limited. Prior works, such as \citet{MR3070101}, examined stability for projected dynamical systems (a subclass of DVIs) in Hilbert spaces, while  \citet{MR4066425} addressed the stability of solutions to partial differential variational inequalities in Banach spaces.  However, to the best of our knowledge, the stability results for ID-DVIs with density constraints are notably absent.  This gap motivates the second purpose of this paper: establish the stability of the ID-DVIs with  density constraints, thereby directly proving   the stability  of both   MNE and  NNE.

Summarizing the above, the present paper is  devoted to finding an NNE of the ID-NDG and the MNE of the ID-MDG,  developing the method for solving ID-DVI with density constraints, and establishing the stability of the ID-DVIs with  density constraints.  The main contributions of this paper are threefold:
 \begin{itemize}
 \item From the viewpoint of models, markets comprises an infinite number of firms,  the demand in each market is described by an infinite-dimensional differential equation, and the network structure is inherently embedded in the models. These features distinct from many existing literatures.
  \item From the viewpoint of methodology, by constructing the backward operator-valued differential equation, the ID-DVI with density constraints is transformed into an ID-VI  with density constraints. Moreover, by variational analysis, the ID-VI is further transformed into a finite-dimensional projection equation with integral constraints. To the best of our knowledge, these methods are new in studying DVIs, and can be applied to solve some other type  DVIs with constraints.
 \item From the viewpoint of conclusions, the iteration methods are derived for solving the NNE of the ID-NDG and the MNE of the ID-MDG. Moreover, two conditions with certain symmetry are respectively provided for ensuring the uniqueness of the NNE and the MNE,  which may help  firms choose strategies between competition and cooperation.
\end{itemize}

The rest of this paper is structured as follows. The next section introduces some necessary preliminaries and formulates the ID-NDG and the ID-MDG on single time interval.   Section 3 obtains the unique NNE of the ID-NDG  by iteration method and discusses the stability of the NNE    under  mild conditions.   Section 4   obtains the unique MNE of the ID-MDG by iteration method  and discusses the stability of the MNE under  mild conditions. Section 5 extends the obtained results in Sections 3-4 to the  case of piecewise constant density. Section 6  concludes this paper.

\section{Preliminaries}
In the sequel, we will frequently suppress the $t$-dependence of  Banach space-valued functions and the $x$-dependence of functions $l(\cdot)$ in $H$ and $H^n$ (i.e., $l=l(\cdot))$. For convenience, we also denote the constant $c\in \mathbb{R}$ (or the vector $c\in \mathbb{R}^n$)  as the constant function in $H$ (or $H^n$).

In this section, we formulate the  ID-NDG and the ID-MDG on single time interval $[s,T]$, i.e., the density $u$ is independent of $t\in [s,T]$. Consider the controlled state equation with infinite firms in the $i$-th markets ($i\in N$):
\begin{equation}\label{1}
dX_i(t)=[A_i(t)X_i(t)+\sum_{j\in N}B_{ij}(t)u_j+f_i(t)]dt,\\
\quad X_i(s)=\xi_i\in H,
\end{equation}
 where $A_i,B_{ij}\in C([s,T],\mathcal{L}(H))$, $f_i\in C([s,T],H)$ $(\forall i,j\in N)$, $X_i(t)$ represents the demand at time $t$ in  the $i$-th market, and $u_j(x)$ stands for the density of choosing strategy $x\in [a,b]$ in   the $i$-th market.  Moreover, $u_i$  belongs to the following  admissible control set
$$
U=\{v\in H|\int_a^bv(x)dx=\langle v,1\rangle_H=1\;\mbox{and}\;v(x)\geq0\;\mbox{a.e.}\;x\in[a,b]\},
$$
 where  $e_i$ represents the $i$-th column vector of the $n\times n$-identity matrix. $u_i\in U$ means that the total proportion in the $i$-th market must be $1$. Then, $u=(u_1,\cdots,u_n)^{\top}$ belongs to the following admissible control set:
$$
 U^n=\{u\in H^n|\langle u,e_i\rangle_{H^n}=1\;\mbox{and}\;
 u_i(x)=\langle u(x),e_i\rangle_{\mathbb{R}^n}\geq0\;\mbox{a.e.}\;x\in[a,b]\;,\;\forall i\in N\},
$$
 where  $e_i$ represents the $i$-th column vector of the $n\times n$-identity matrix as well as the corresponding
 constant function in $H^n$. Clearly, $U$ and $U^n$ are nonempty, closed and convex in $H$ and $H^n$, respectively.

To ensure the solvability of corresponding equilibriua, we need the following lemma, which can be easily proved by standard arguments  (see \citep{MR3236753, MR4363403} for further discussion).
\begin{lemma}\label{8}
For any given $u\in U^n$, equation \eqref{1} admits   a unique solution $X_i\in C([s,T],H)$.
\end{lemma}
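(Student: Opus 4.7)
The plan is to treat \eqref{1} as a linear inhomogeneous evolution equation in the Hilbert space $H$ with a continuous family of bounded generators $A_i(\cdot)$, and to run the standard Picard--Banach fixed-point argument on its integral form.

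First I would fix $u\in U^n$ and set $g_i(t):=\sum_{j\in N}B_{ij}(t)u_j+f_i(t)$. I would check that $g_i\in C([s,T],H)$: since $u_j\in H$ is a fixed element and $B_{ij}\in C([s,T],\mathcal{L}(H))$, the estimate
\[
\|B_{ij}(t)u_j-B_{ij}(t')u_j\|_H\le \|B_{ij}(t)-B_{ij}(t')\|_{\mathcal{L}(H)}\,\|u_j\|_H
\]
shows continuity of $t\mapsto B_{ij}(t)u_j$ in $H$, and adding finitely many such continuous functions together with $f_i$ yields $g_i\in C([s,T],H)$.

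Next I would pass to the mild/integral form
\[
X_i(t)=\xi_i+\int_s^t\bigl[A_i(r)X_i(r)+g_i(r)\bigr]\,dr,\qquad t\in[s,T],
\]
and define the operator $\Gamma:C([s,T],H)\to C([s,T],H)$ by $(\Gamma Y)(t)=\xi_i+\int_s^t[A_i(r)Y(r)+g_i(r)]\,dr$. Setting $M:=\|A_i\|_{C([s,T],\mathcal{L}(H))}<+\infty$, a standard induction gives, for any $Y_1,Y_2\in C([s,T],H)$ and any $k\ge 1$,
\[
\|(\Gamma^kY_1)(t)-(\Gamma^kY_2)(t)\|_H\le \frac{M^k(t-s)^k}{k!}\,\|Y_1-Y_2\|_{C([s,T],H)},
\]
so some iterate $\Gamma^k$ is a strict contraction on the Banach space $C([s,T],H)$. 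Banach's fixed-point theorem then yields a unique fixed point $X_i\in C([s,T],H)$, which by construction satisfies the integral equation and hence, upon differentiation, \eqref{1}. Uniqueness can also be read off directly by applying Gronwall's inequality to the difference of two candidate solutions.

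I do not anticipate a genuine obstacle here: once the inhomogeneity $g_i$ is verified to lie in $C([s,T],H)$, the conclusion is the classical Picard existence/uniqueness result in a Banach space for linear ODEs with a time-continuous, uniformly bounded operator coefficient. The only mildly delicate point is that $A_i(\cdot)$ is time-dependent rather than constant, so the contraction argument has to be performed on iterates of $\Gamma$ (using the factorial factor $1/k!$) rather than on $\Gamma$ itself over the whole interval $[s,T]$.
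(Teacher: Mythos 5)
Your proof is correct and is precisely the ``standard argument'' the paper invokes without writing out: the paper omits the proof of Lemma \ref{8}, simply citing standard fixed-point/Picard techniques for linear evolution equations with bounded, time-continuous operator coefficients. Your verification that the inhomogeneity lies in $C([s,T],H)$ and the contraction-on-iterates (factorial factor) argument fill in exactly what the authors intended.
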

\begin{remark}
According to \citep[Proposition 1.6]{MR3236753}, we can  interchange the order of the linearly bounded operator on $H$ (or $H^n$) and the integral with respect to $t$.
\end{remark}

In the current paper, we focus on  the following two problems.
\begin{prob}\label{2}
Find an NNE $\widehat{u}=(\widehat{u}_1,\cdots,\widehat{u}_n)\in U^n$ such  that for any $i\in N$ and a.e. $x\in[a,b]$,
\begin{equation}\label{4}
\widehat{u}_i(x)>0\quad\Rightarrow\quad\widehat{V}_i(x)\geq\langle\widehat{V}_i,\widehat{u}_i\rangle_{H},
\end{equation}
where 
\begin{equation}\label{9}
\widehat{V}_i=\alpha_i+\int_s^T(E_i(t)\widehat{X}_{i}(t)+F_i(t)\widehat{u}_i)dt+G_i\widehat{X}_{i}(T)
\end{equation}
represents the optimal payoffs in the $i$-th market. Here   $\alpha_i\in H$, $G_i\in\mathcal{L}(H)$, $E_i,F_i\in C([s,T],\mathcal{L}(H))$are all given, and $\widehat{X}_{i}(t)$ is governed by:
\begin{equation}\label{57}
d\widehat{X}_{i}(t)=[A_i(t)\widehat{X}_{i}(t)+\sum_{j\in N}B_{ij}(t)\widehat{u}_j+f_i(t)]dt,\\
\quad \widehat{X}(s)=\xi_i\in H.
\end{equation}
\end{prob}
\begin{remark}
Clearly,  $\widehat{V}_i(x)$ ($x\in[a,b]$) represents the payoffs of production $x$ in the $i$-th market and one can check that $\widehat{V}_i\in H$. If $\widehat{u}_j$ ($j=1,\cdots,i-1,i+1,\cdots,n$) are all given, then we can write $\widehat{V}_i=J_i(\widehat{u}_i)$ with $J_i:H\rightarrow H$, i.e., the strategy $\widehat{u}_i$ in the $i$-th market should be chosen based on the strategies $\widehat{u}_j$ $(j\neq i)$ from other $n-1$ markets.
\end{remark}

\begin{prob}\label{62}
For $V_i=J_i(u_i)$, find an MNE $\widehat{u}=(\widehat{u}_1,\cdots,\widehat{u}_n)\in U^n$ such  that 
$\langle \widehat{u}_i,\widehat{V}_i\rangle_H=\max_{u_i\in U}\langle u_i,V_i\rangle_H$ for any $i\in N$.
\end{prob}

\begin{remark}
For Problems \ref{2} and \ref{62}, we refer to  $\widehat{V}_i=J_i(\widehat{u}_i)$ as the value function, $(\widehat{X},\widehat{u})$ as the optimal pair satisfying \eqref{57}, and $\langle \widehat{u},\widehat{V}\rangle_H$ as the optimal weighted aggregate payoffs (OWAP), where $\widehat{X}=(\widehat{X}_1,\widehat{X}_2,\cdots,\widehat{X}_n)^{\top}$.  It is noteworthy that the network structure is inherently embedded in Problem \ref{2} in two aspects: (i) The demand $X_i$ can be influenced by the  density $u_i(x)$ in all of $n$ market, where $B_{ij}$ $(i,j\in N)$ stands for the weighted coefficients; (ii) The coefficients   $A_i,B_{ij},E_i,F_i,G_i$ are allowed to be selected as the graphon operators, which take the form $W\in\mathcal{L}(H):(Wg)(x)=\int_a^bW(x,y)g(y)dy$ ($\forall  g\in H$). Such operators are able to capture the relationships among   firms and markets to some degree (see Example \ref{58}).   For further insights into network Cournot competitions, we direct the reader to \citep{bimpikis2019cournot, cai2019role, MR4755704, MR4237615}.
\end{remark}

We recall the following definitions.  A function $K:\mathcal{H}\rightarrow \mathbb{R}$ is called Fr\'{e}chet  differentiable at $\mu\in \mathcal{H}$ (see \citep{MR4363403}) if there exists a $\kappa(\mu)\in \mathcal{H}'$ such that
$$
\lim_{h\in\mathcal{H},\|h\|_{\mathcal{H}}\rightarrow 0}\frac{|K(\mu+h)-K(\mu)-\kappa(\mu)h|}{\|h\|_{\mathcal{H}}}=0.
$$
In this case, we write $\nabla K_{\mu}=\kappa(\mu)$ and call it the Fr\'{e}chet derivative of $K$ at $\mu$.  Moreover, an operator $G^*\in\mathcal{L}(\mathcal{H})$ is called the dual operator of $G\in\mathcal{L}(\mathcal{H})$  (see \citep{MR4738264}) if
$\langle Gu,v\rangle_{\mathcal{H}}=\langle u,G^*v\rangle_{\mathcal{H}}, \;\forall u,v\in \mathcal{H}.$
Especially, if $G^*=G$, then $G$ is called self-adjoint.

Based on the definition of finite-dimensional full potential game (see \citep[Subsection 3.1.2]{MR2560519}), we can define the infinite-dimensional full potential game as follows.
\begin{defn}\label{51}
A vector $V=(V_1,\cdots,V_n)^{\top}:U^n\rightarrow H^n$ is said to be a full potential game if there exists a Fr\'{e}chet  differentiable functional $K:U^n\rightarrow \mathbb{R}$ satisfying
\begin{equation}\label{44}
\nabla K_uh=\langle V,h \rangle_{H^n}, \quad \forall h\in H^n
\end{equation}
 for any given $u\in U^n$. The function $K$, which is unique up to the addition of a constant, is called the full potential function of the full potential game $V$.
\end{defn}

\section{Existence, Uniqueness and Stability of NNEs}
In this section, we focus on the existence, uniqueness and the stability of NNEs of Problem \ref{2}.
\subsection{Existence and Uniqueness of NNEs}\label{40}
In this subsection, we aim to find the NNEs of Problem \ref{2}. To begin with, we give the following equivalent characterization for  Problem \ref{2}.
\begin{prob}\label{3}
Find $\widehat{u}=(\widehat{u}_1,\cdots,\widehat{u}_n)\in U^n$ such  that
\begin{equation}\label{5}
\langle\widehat{V}_i,Z_i-\widehat{u}_i\rangle_{H}\geq 0,\quad \forall Z_i\in U^{n}, \;\forall i\in N.
\end{equation}
\end{prob}

\begin{thm}\label{11}
Problem \ref{2} is equivalent to Problem \ref{3}.
\end{thm}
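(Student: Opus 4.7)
The plan is to establish the two implications separately, exploiting the fact that the admissible set $U$ consists of probability densities on $[a,b]$ and that both (4) and (5) decouple across the market index $i$, so it suffices to argue componentwise for a fixed $i\in N$. Throughout, set $c_i:=\langle \widehat{V}_i,\widehat{u}_i\rangle_H$.

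For Problem \ref{3} $\Rightarrow$ Problem \ref{2}: given any measurable $A\subseteq[a,b]$ with Lebesgue measure $\lambda(A)>0$, the rescaled indicator $Z_i:=\chi_A/\lambda(A)$ lies in $U$; taking $Z_j=\widehat{u}_j$ for $j\neq i$, inequality (5) produces $\frac{1}{\lambda(A)}\int_A\widehat{V}_i\,dx\geq c_i$. Since $A$ is arbitrary, the Lebesgue differentiation theorem forces $\widehat{V}_i(x)\geq c_i$ for a.e.\ $x\in[a,b]$, whence (4) follows at once on the set where $\widehat{u}_i(x)>0$.

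For Problem \ref{2} $\Rightarrow$ Problem \ref{3}: combining (4) with $\int_a^b\widehat{u}_i\,dx=1$ and $\widehat{u}_i\geq 0$ yields $c_i=\int\widehat{V}_i\widehat{u}_i\,dx\geq c_i\int\widehat{u}_i\,dx=c_i$, which forces equality and hence $(\widehat{V}_i-c_i)\widehat{u}_i=0$ a.e., so $\widehat{V}_i=c_i$ a.e.\ on $\mathrm{supp}(\widehat{u}_i)$. I then extend this to $\widehat{V}_i\geq c_i$ a.e.\ on all of $[a,b]$ by contradiction: if $\widehat{V}_i<c_i$ on a set $B\subseteq\{\widehat{u}_i=0\}$ of positive measure, an admissible perturbation $u_i'\in U$ that redistributes mass from $\mathrm{supp}(\widehat{u}_i)$ into $B$ would contradict the defining NNE property of $\widehat{u}$. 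Once $\widehat{V}_i\geq c_i$ a.e., for any $Z_i\in U$ we have $\langle\widehat{V}_i,Z_i-\widehat{u}_i\rangle_H=\int(\widehat{V}_i-c_i)Z_i\,dx\geq 0$ by $Z_i\geq 0$ and $\int Z_i=1$, which establishes (5).

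The main obstacle is the global-extension step in the forward direction, since (4) literally constrains $\widehat{V}_i$ only on $\mathrm{supp}(\widehat{u}_i)$, whereas (5) requires a pointwise lower bound on all of $[a,b]$. Making the perturbation argument rigorous — constructing the deviating density inside $U$ and interpreting the NNE property so that off-support values of $\widehat{V}_i$ strictly below the mean are excluded — is the technically delicate part. If necessary, one can streamline by bypassing the extension entirely: use the $\chi_A/\lambda(A)$ test-function trick symmetrically in both directions, so that (4) and (5) are both reformulated as the single pointwise inequality $\widehat{V}_i(x)\geq c_i$ a.e., making the equivalence transparent.
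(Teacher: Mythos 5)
Your direction ``Problem \ref{3} $\Rightarrow$ Problem \ref{2}'' is correct and is essentially the paper's own argument: the paper tests \eqref{5} against the normalized indicator of the bad set $\Lambda^i_\epsilon=\{x:\widehat u_i(x)>0,\ \widehat V_i(x)\le c_i-\epsilon\}$ rather than invoking Lebesgue differentiation on arbitrary $A$, but the mechanism is identical. The genuine gap is in your forward direction, and it sits exactly where you suspect. Condition \eqref{4} is a property of the fixed pair $(\widehat V_i,\widehat u_i)$ alone; together with $\int_a^b\widehat u_i\,dx=1$ it is equivalent to $\widehat V_i=c_i$ a.e.\ on $\{\widehat u_i>0\}$ and imposes \emph{no} constraint on $\{\widehat u_i=0\}$. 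Problem \ref{2} contains no deviation clause, and $\widehat V_i=J_i(\widehat u_i)$ is coupled to the density through the state equation \eqref{57}, so a perturbed density $u_i'$ that moves mass into a set $B\subseteq\{\widehat u_i=0\}$ induces a \emph{different} payoff function; there is simply nothing in the hypotheses for such a perturbation to contradict. Hence your extension of $\widehat V_i\ge c_i$ from $\{\widehat u_i>0\}$ to all of $[a,b]$ is not established, and your proposed ``streamlining'' --- treating \eqref{4} as the unconditional inequality $\widehat V_i(x)\ge c_i$ a.e.\ on $[a,b]$ --- is not a reformulation but a strict strengthening of Problem \ref{2}.

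For comparison, the paper's proof of this direction consists of the single chain $\int_a^b\widehat V_iZ_i\,dx\ge\int_a^b\langle\widehat V_i,\widehat u_i\rangle_H Z_i\,dx$ for arbitrary $Z_i\in U$, asserted to ``follow from \eqref{4}''; this silently uses $\widehat V_i\ge c_i$ on $\{Z_i>0\}$, i.e.\ off the support of $\widehat u_i$, which \eqref{4} does not provide. So you have correctly isolated the crux of the forward implication, but neither your perturbation argument nor the paper's one-line inequality closes it: as stated, the implication holds only if \eqref{4} is read as an unconditional pointwise bound on all of $[a,b]$.
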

\begin{proof}
\textbf{``$\Rightarrow$ part"} For any $i\in N$ and any $Z_i\in U^{n}$, it follows from \eqref{4} that
$$
\langle\widehat{V}_i,Z_i\rangle_{H}=\int_a^b\widehat{V}_i(x)Z_i(x)dx
\geq \int_a^b\langle\widehat{V}_i,\widehat{u}_i\rangle_{H}Z_i(x)dx=\langle\widehat{V}_i,\widehat{u}_i\rangle_{H},
$$
which implies
$\langle\widehat{V}_i,Z_i-\widehat{u}_i\rangle_{H}\geq 0.$

\textbf{``$\Leftarrow$ part"} Suppose that \eqref{4} fails. Then for some $\epsilon>0$, by defining
$$\Lambda^i_{\epsilon}=\{x\in[a,b]|u_i(x)>0\;\mbox{and}\;\widehat{V}_i(x)
\leq \langle\widehat{V}_i,\widehat{u}_i\rangle_{H}-\epsilon\},$$
we have $\int_a^bI_{\Lambda^i_{\epsilon}}(x)dx>0$. It follows from \eqref{5} that
\begin{equation}\label{6}
\langle\widehat{V}_i-\langle\widehat{V}_i,\widehat{u}_i\rangle_{H},Z_i\rangle_{H}
\geq0,
\quad \forall Z_i\in U, \;\forall i\in N.
\end{equation}
On the other hand, by choosing $Z_i(x)=I_{\Lambda^i_{\epsilon}}(x)(\int_a^bI_{\Lambda^i_{\epsilon}}(x)dx)^{-1}$ for $x\in[a,b]$ in \eqref{6}, one has
$\langle\widehat{V}_i-\langle\widehat{V}_i,\widehat{u}_i\rangle_{H},Z_i\rangle_{H}\leq -\epsilon<0,$
which contradicts \eqref{6} and ends the proof. 
\end{proof}
According to Theorem \ref{11}, the optimal pair $(\widehat{X},\widehat{u})$ is the solution to the following system of ID-DVI ($\forall i\in N$):
\begin{equation}\label{76}
\begin{cases}
d\widehat{X}_i(t)=[A_i(t)\widehat{X}_i(t)+\sum_{j\in N}B_{ij}(t)\widehat{u}_j+f_i(t)]dt,\quad \widehat{X}_i(s)=\xi_i\in H,\\
\langle\widehat{V}_i,Z_i-\widehat{u}_i\rangle_{H}\geq 0,\quad \forall Z_i\in U.
\end{cases}
\end{equation}
Differing from traditional DVIs, the terminal state $\widehat{X}_i(T)$ is involved   in \eqref{76}. To solve   ID-DVI \eqref{76},  let us consider the backward operator-valued differential equation
\begin{equation}\label{7}
dY_i(t)=-(E_i(t)+Y_i(t)A_i(t))dt,\quad Y_i(T)=G_i
\end{equation}
for each $i\in N$, to which the solution is formally defined as follows.
\begin{defn}\label{22}
$Y_i\in C([s,T],\mathcal{L}(H))$ is called a strong solution to \eqref{7}, if
$$
Y_i(t)\eta = G_i\eta + \int_t^T (E_i(\tau)\eta + Y_i(\tau)A_i(\tau)\eta) d\tau, \quad \forall \eta\in H,\;\forall t\in[s,T].
$$
\end{defn}
The solvability of \eqref{7} can be ensured by the following theorem.
\begin{thm}\label{23}
For any $i\in N$, the backward operator-valued differential equation  \eqref{7} admits a unique strong solution $Y_i\in C([s,T],\mathcal{L}(H))$.
\end{thm}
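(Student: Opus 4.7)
The plan is to recognize equation \eqref{7} as a linear integral equation in the Banach space $C([s,T],\mathcal{L}(H))$ and solve it by a contraction/Picard argument. Fix $i\in N$ and define the operator
\begin{equation*}
\Phi: C([s,T],\mathcal{L}(H))\to C([s,T],\mathcal{L}(H)),\qquad (\Phi Y)(t)\eta := G_i\eta+\int_t^T\bigl(E_i(\tau)\eta+Y(\tau)A_i(\tau)\eta\bigr)d\tau,
\end{equation*}
so that a strong solution in the sense of Definition \ref{22} is exactly a fixed point of $\Phi$.

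First I would verify that $\Phi$ is well-defined. For fixed $Y\in C([s,T],\mathcal{L}(H))$ the integrand $\tau\mapsto E_i(\tau)+Y(\tau)A_i(\tau)$ is continuous into $\mathcal{L}(H)$ (since composition of operators is jointly continuous in the operator norm on bounded sets), so the Bochner integral exists in $\mathcal{L}(H)$ and the defining identity holds pointwise on $\eta\in H$. Continuity of $t\mapsto(\Phi Y)(t)$ in the operator norm follows from the estimate
\begin{equation*}
\|(\Phi Y)(t_1)-(\Phi Y)(t_2)\|_{\mathcal{L}(H)}\le |t_1-t_2|\bigl(\|E_i\|_{C([s,T],\mathcal{L}(H))}+\|Y\|_{C([s,T],\mathcal{L}(H))}\|A_i\|_{C([s,T],\mathcal{L}(H))}\bigr).
\end{equation*}

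Next, set $M_A:=\|A_i\|_{C([s,T],\mathcal{L}(H))}$. For $Y_1,Y_2\in C([s,T],\mathcal{L}(H))$ one has the basic estimate
\begin{equation*}
\|(\Phi Y_1)(t)-(\Phi Y_2)(t)\|_{\mathcal{L}(H)}\le M_A\int_t^T\|Y_1(\tau)-Y_2(\tau)\|_{\mathcal{L}(H)}\,d\tau.
\end{equation*}
Iterating this inequality yields
\begin{equation*}
\|(\Phi^k Y_1)(t)-(\Phi^k Y_2)(t)\|_{\mathcal{L}(H)}\le \frac{M_A^k(T-t)^k}{k!}\,\|Y_1-Y_2\|_{C([s,T],\mathcal{L}(H))},
\end{equation*}
so some iterate $\Phi^k$ is a strict contraction on $C([s,T],\mathcal{L}(H))$. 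Banach's fixed-point theorem (applied to $\Phi^k$, whose unique fixed point must coincide with the unique fixed point of $\Phi$) therefore delivers existence and uniqueness of a strong solution $Y_i\in C([s,T],\mathcal{L}(H))$. Equivalently, one can introduce the Bielecki-type norm $\|Y\|_\lambda:=\sup_{t\in[s,T]}e^{\lambda(T-t)}\|Y(t)\|_{\mathcal{L}(H)}$ with $\lambda>M_A$ and show that $\Phi$ is itself a contraction in this equivalent norm. Uniqueness alone can be recovered from the same linear estimate combined with a (backward) Gronwall inequality.

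The main technical subtlety—and the only step that is not entirely routine—is handling that the unknown $Y_i(t)$ is operator-valued rather than Hilbert-space valued: one must ensure that the integral $\int_t^T Y(\tau)A_i(\tau)\,d\tau$ is a genuinely bounded operator on $H$ and that composition remains continuous in the operator norm. These points are guaranteed by the hypotheses $A_i,E_i\in C([s,T],\mathcal{L}(H))$ and by the submultiplicativity of the operator norm, after which the argument parallels the classical Picard proof for linear ODEs in a Banach space. Alternatively, and perhaps more transparently, one can substitute $Z_i(t):=Y_i(T+s-t)$ to transform \eqref{7} into the forward linear equation $Z_i'(t)=E_i(T+s-t)+Z_i(t)A_i(T+s-t)$ with $Z_i(s)=G_i$, which falls within the standard existence-uniqueness theory for linear operator-valued ODEs in $\mathcal{L}(H)$ and yields the same conclusion.
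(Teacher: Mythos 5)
Your proof is correct and rests on the same core idea as the paper's: rewrite \eqref{7} as a fixed-point problem for the integral operator $\Phi$ (the paper's $M_i$) on $C(\cdot,\mathcal{L}(H))$ and apply Banach's fixed-point theorem. The only real difference is how the contraction is produced. The paper estimates $\|\Delta Y_i^{(m)}\|$ via the inequality $2\langle c,d\rangle_H\le\rho\|c\|_H^2+\rho^{-1}\|d\|_H^2$, which forces it to restrict to a subinterval of length $\frac{1}{3}\|A_i\|^{-1}_{C([T_0,T],\mathcal{L}(H))}$ to get the contraction factor $1/\sqrt2$, and then to patch finitely many such subintervals together to cover $[s,T]$. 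Your factorial bound $\frac{M_A^k(T-t)^k}{k!}$ on the $k$-th iterate (equivalently, the Bielecki norm) yields a contraction on all of $[s,T]$ in a single step, which is cleaner and dispenses with the patching; the standard remark that the unique fixed point of $\Phi^k$ is also the unique fixed point of $\Phi$ closes the argument. Both proofs rely on the same well-definedness facts (submultiplicativity of the operator norm and continuity of $\tau\mapsto Y(\tau)A_i(\tau)$ into $\mathcal{L}(H)$), which you make explicit and the paper leaves implicit. Your final reduction to a forward linear ODE in $\mathcal{L}(H)$ is also a valid alternative.
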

\begin{proof}
Without loss of generality, we assume that $\|A_i\|_{ C([s,T],\mathcal{L}(H))}\neq 0$. Define $Y_i^{(0)}=G_i$ and 
$
Y_i^{(m+1)}(\cdot)\eta=M_i[Y_i^{(m)}(\cdot)]\eta=G_i\eta+\int_{\cdot}^T(E_i(t)+Y_i^{(m)}(t)A_i(t))\eta dt
$
for given $\eta\in H$ ($T_0<T$ is fixed), then  $M_i$ can be regraded as the operator from $C([T_0,T],\mathcal{L}(H))$ to $C([T_0,T],\mathcal{L}(H))$.
For any given $\eta\in H$, letting $\Delta Y_i^{(m)}=Y_i^{(m+1)}-Y_i^{(m)}$ and $\triangle T=T-T_0$, one has
$\Delta Y_i^{(m)}(t)\eta=\int_t^T\Delta Y_i^{(m-1)}(\tau)A_i(\tau)\eta d\tau$($\forall t\in[s,T]$).
Applying the inequality $2\langle c,d\rangle_H\leq \rho \|c\|_H^2+\frac{1}{\rho}\|d\|_H^2$ $(\forall \rho>0)$ derives
\begin{align*}
 & \sup_{t\in[T_0,T]}\|\Delta Y_i^{(m)}(t)\eta\|_H^2
=2\sup_{t\in[T_0,T]}\int_t^T\langle\Delta Y_i^{(m-1)}(\tau)A_i(\tau)\eta, \Delta Y_i^{(m)}(\tau)\eta\rangle_H d\tau\\
\leq&\sup_{t\in[T_0,T]}\int_t^T[\rho\|\Delta Y_i^{(m-1)}(\tau)A_i(\tau)\eta\|_H^2+
  \frac{1}{\rho}\|\Delta Y_i^{(m)}(\tau)\eta\|_H^2]d\tau\\
\leq&\rho\|\eta\|_H^2\int_{T_0}^T\|A_i(\tau)\|^2_{\mathcal{L}(H)}\|\Delta Y_i^{(m-1)}(\tau)\|^2_{\mathcal{L}(H)}d\tau+
\frac{\|\eta\|_H^2}{\rho}\int_{T_0}^T\|\Delta Y_i^{(m)}(\tau)\|_{\mathcal{L}(H)}^2d\tau\\
\leq& \triangle T\|\eta\|_H^2[\rho\|A_i\|^2_{C([T_0,T],\mathcal{L}(H))}\|\Delta Y_i^{(m-1)}\|^2_{C([T_0,T],\mathcal{L}(H))}+\frac{1}{\rho}\|\Delta Y_i^{(m)}\|^2_{C([T_0,T],\mathcal{L}(H))}].
\end{align*}
Since the above inequality holds for all $\eta\in H$, it follows that
$$
(1-\frac{\triangle T}{\rho})\|\Delta Y_i^{(m)}\|^2_{C([T_0,T],\mathcal{L}(H))}\leq\rho \triangle T\|A_i\|^2_{C([T_0,T],\mathcal{L}(H))}\|\Delta Y_i^{(m-1)}\|^2_{C([T_0,T],\mathcal{L}(H))}.
$$
By choosing $\rho=\|A_i\|^{-1}_{C([T_0,T],\mathcal{L}(H))}$ and $\triangle T=\frac{1}{3}\|A_i\|^{-1}_{C([T_0,T],\mathcal{L}(H))}$, one has
$$
\|\Delta Y_i^{(m)}\|_{C([T_0,T],\mathcal{L}(H))}\leq \frac{1}{\sqrt{2}}\|\Delta Y_i^{(m-1)}\|_{C([T_0,T],\mathcal{L}(H))}
$$
and so $M_i$ is contractive, which implies the existence and uniqueness of the strong solution $Y_i$ to \eqref{7} on time interval $[T_0,T]$. Repeating the above arguments finite times by ending at $T_0$, the desired result can be obtained.
\end{proof}

\begin{thm}
The value function of  the $i$-th market ($i\in N$) can be given by
\begin{equation}\label{60}
\widehat{V}_i=\alpha_i+Y_i(s)\xi_i+\int_s^TY_i(t)f_i(t)dt+\int_s^TY_i(t)\sum_{j\in N}B_{ij}(t)\widehat{u}_jdt+\int_s^TF_i(t)\widehat{u}_i dt.
\end{equation}
\end{thm}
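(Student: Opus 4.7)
The plan is to derive the formula \eqref{60} via an integration-by-parts identity applied to the product $Y_i(t)\widehat{X}_i(t)$, exploiting the fact that the backward equation \eqref{7} was designed precisely so that the unwanted term $\int_s^T E_i(t)\widehat{X}_i(t)\,dt$ in the definition \eqref{9} of $\widehat{V}_i$ is absorbed and the terminal contribution $G_i\widehat{X}_i(T)$ is propagated back to $s$.

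Concretely, I would first fix $i\in N$ and consider the map $t\mapsto Y_i(t)\widehat{X}_i(t)\in H$. Both factors are absolutely continuous in the sense provided by \eqref{57} and Definition \ref{22}, and $A_i, E_i$ act as bounded operators. A formal application of the product rule yields
\begin{equation*}
\frac{d}{dt}\bigl(Y_i(t)\widehat{X}_i(t)\bigr)
= -\bigl(E_i(t)+Y_i(t)A_i(t)\bigr)\widehat{X}_i(t)
+ Y_i(t)\Bigl[A_i(t)\widehat{X}_i(t)+\sum_{j\in N}B_{ij}(t)\widehat{u}_j+f_i(t)\Bigr].
\end{equation*}
The $Y_i(t)A_i(t)\widehat{X}_i(t)$ terms cancel, leaving
\begin{equation*}
\frac{d}{dt}\bigl(Y_i(t)\widehat{X}_i(t)\bigr)
= -E_i(t)\widehat{X}_i(t)+ Y_i(t)\sum_{j\in N}B_{ij}(t)\widehat{u}_j+Y_i(t)f_i(t).
\end{equation*}

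Integrating over $[s,T]$ and using the boundary values $Y_i(T)=G_i$, $\widehat{X}_i(s)=\xi_i$ gives
\begin{equation*}
G_i\widehat{X}_i(T)-Y_i(s)\xi_i
=-\int_s^T E_i(t)\widehat{X}_i(t)\,dt+\int_s^T Y_i(t)\sum_{j\in N}B_{ij}(t)\widehat{u}_j\,dt+\int_s^T Y_i(t)f_i(t)\,dt.
\end{equation*}
Substituting this expression for $G_i\widehat{X}_i(T)$ into \eqref{9}, the terms $\pm\int_s^T E_i(t)\widehat{X}_i(t)\,dt$ cancel, and rearrangement produces \eqref{60}.

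The main obstacle is justifying the product rule rigorously in the infinite-dimensional setting, since $Y_i$ is operator-valued and solves \eqref{7} only in the strong sense of Definition \ref{22}. My plan is to handle this by testing against an arbitrary $\eta\in H$: writing the integral identity of Definition \ref{22} for $Y_i(\cdot)\eta$ with $\eta$ replaced by values of $\widehat{X}_i$, then combining with the integral form of \eqref{57} and invoking Fubini's theorem to interchange the order of integration (permissible because all integrands are continuous $H$- or $\mathcal{L}(H)$-valued functions on the compact interval $[s,T]$, and by the remark following Lemma \ref{8} bounded linear operators commute with the Bochner integral). A routine approximation of $\widehat{X}_i$ by step functions in $t$, together with the continuity $Y_i\in C([s,T],\mathcal{L}(H))$ and $\widehat{X}_i\in C([s,T],H)$, then produces the required product-rule identity rigorously, completing the derivation of \eqref{60}.
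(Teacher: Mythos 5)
Your proposal is correct and follows essentially the same route as the paper: the paper's proof is exactly the integration-by-parts identity for $Y_i(t)\widehat{X}_i(t)$, the cancellation of the $Y_i A_i\widehat{X}_i$ terms, and substitution into \eqref{9}. Your additional discussion of how to justify the product rule rigorously via Definition \ref{22} and Fubini is a welcome elaboration of a step the paper leaves implicit, but it does not change the argument.
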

\begin{proof}
Taking the differential of $Y_i(t)\widehat{X}_{i}(t)$, one has
$$
G_i\widehat{X}_{i}(T)-Y_i(s)\xi_i
=-\int_s^TE_i(t)\widehat{X}_{i}(t)dt+\int_s^TY_i(t)(\sum_{j\in N}B_{ij}(t)\widehat{u}_j+f_i(t))dt,
$$
and the result follows from \eqref{9} immediately.
\end{proof}
For $\alpha=(\alpha_1,\cdots,\alpha_n)^{\top}$ and $\widehat{V}=(\widehat{V}_1,\cdots,\widehat{V}_n)^{\top}$, set
\begin{equation}\label{15}
\begin{cases}
\widehat{V}=P\widehat{u}+Q=\int_s^TP(t)\widehat{u}dt+Q,\quad \mathcal{P}_i(t)=Y_i(t)B_{ii}(t)+F_i(t),\\
\overline{Y}_i=\alpha_i+Y_{i}(s)\xi_{i}+\int_s^TY_{i}(t)f_{i}(t)dt,\quad Q=(\overline{Y}_1,\cdots,\overline{Y}_n)^{\top},\\
P(t)=\begin{bmatrix}
\mathcal{P}_1(t)& Y_1(t)B_{12}(t)&\dots & Y_1(t)B_{1n}(t)\\
  Y_2(t)B_{21}(t)& \mathcal{P}_2(t)&\dots & Y_2(t)B_{2n}(t)\\
\vdots &\vdots & \ddots&\vdots\\
Y_n(t)B_{n1}(t)&Y_n(t)B_{n2}(t)&\dots & \mathcal{P}_n(t)
\end{bmatrix}_{n \times n}.
\end{cases}
\end{equation}
Then, one can check $Q\in H^n$ and $P\in \mathcal{L}(H^n)$. Another equivalent characterization for  Problem \ref{2} can be specified as follows.

\begin{prob}\label{12}
Find the solution $\widehat{u}=(\widehat{u}_1,\cdots,\widehat{u}_n)\in U^n$ to the ID-VI
\begin{equation}\label{10}
\langle P\widehat{u}+Q,Z-\widehat{u}\rangle_{H^n}\geq 0,\quad \forall Z\in U^{n}.
\end{equation}
\end{prob}
\begin{remark}\label{55}
If $P\in \mathcal{L}(H^n)$ is self-adjoint, then by choosing $K:H^n\rightarrow \mathbb{R},\;K(u)=\frac{1}{2}\langle Pu,u\rangle_{H^n}+\langle Q,u\rangle_{H^n}$ ($u\in U^n$), one can check that $\nabla K_u\in (H^n)':\nabla K_ug=\langle Pu+Q,g\rangle_{H^n}$ ($\forall g\in H^n$) and $V=Pu+Q$ satisfies \eqref{44}. Thus according to Definition \ref{51}, $V$ is a full potential game. As a consequence, Problem \ref{12} is equivalent to finding $\widehat{u}\in U^n$ to minimize $K(u)$ when $P\in \mathcal{L}(H^n)$ is self-adjoint.
\end{remark}

\begin{thm}\label{59}
Problems  \ref{3} is equivalent to Problem \ref{12}.
\end{thm}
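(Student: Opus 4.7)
The plan is to prove the two directions of the equivalence by exploiting the decomposability of the $H^n$ inner product into a sum of $H$-inner products over components, together with the identity $\widehat{V} = P\widehat{u} + Q$.

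First I would record the key identity. From equation \eqref{60} and the definitions in \eqref{15}, one has
\begin{equation*}
(P(t)\widehat{u})_i = \mathcal{P}_i(t)\widehat{u}_i + \sum_{j\neq i} Y_i(t)B_{ij}(t)\widehat{u}_j = Y_i(t)\sum_{j\in N}B_{ij}(t)\widehat{u}_j + F_i(t)\widehat{u}_i,
\end{equation*}
so integrating over $[s,T]$ and adding $\overline{Y}_i$ gives $\widehat{V}_i = (P\widehat{u}+Q)_i$ componentwise. Hence $\widehat{V} = P\widehat{u}+Q$ in $H^n$, and by definition of the $H^n$ inner product,
\begin{equation*}
\langle P\widehat{u}+Q, Z-\widehat{u}\rangle_{H^n} = \sum_{i\in N}\langle \widehat{V}_i, Z_i-\widehat{u}_i\rangle_H.
\end{equation*}

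For the direction Problem~\ref{3} $\Rightarrow$ Problem~\ref{12}, I would take an arbitrary $Z=(Z_1,\dots,Z_n)\in U^n$. Since each component $Z_i$ lies in $U$, Problem~\ref{3} yields $\langle\widehat{V}_i, Z_i-\widehat{u}_i\rangle_H\geq 0$ for every $i\in N$; summing over $i$ and using the identity above gives exactly \eqref{10}.

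For the converse direction Problem~\ref{12} $\Rightarrow$ Problem~\ref{3}, I would fix an index $i\in N$ and an arbitrary $Z_i\in U$, and construct the test point $Z\in U^n$ by taking its $i$-th coordinate to be $Z_i$ and its $j$-th coordinate to be $\widehat{u}_j$ for $j\neq i$. Convexity and the definition of $U^n$ make $Z$ admissible. Plugging this $Z$ into \eqref{10}, every summand with $j\neq i$ vanishes because $Z_j-\widehat{u}_j=0$, and only the $i$-th term survives, giving $\langle \widehat{V}_i, Z_i-\widehat{u}_i\rangle_H\geq 0$. Since $i$ and $Z_i$ were arbitrary, \eqref{5} follows.

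The argument is essentially bookkeeping; there is no real obstacle. The only subtle point is recognizing (and checking) that $\widehat{V}_i$ in \eqref{60} is precisely the $i$-th component of the operator $P$ acting on $\widehat{u}$ plus the constant term $Q$, which one verifies by matching the diagonal entry $\mathcal{P}_i(t)=Y_i(t)B_{ii}(t)+F_i(t)$ with the corresponding terms $Y_i(t)B_{ii}(t)\widehat{u}_i + F_i(t)\widehat{u}_i$ arising in \eqref{60}. Note also that the quantifier in Problem~\ref{3} should be read as $Z_i\in U$ (so that $Z_i-\widehat{u}_i$ lies in $H$); this is the natural interpretation under which the two problems are equivalent.
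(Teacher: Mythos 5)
Your proof is correct and follows essentially the same route as the paper's: the forward direction sums the componentwise inequalities $\langle\widehat{V}_i,Z_i-\widehat{u}_i\rangle_H\geq 0$ over $i$, and the converse tests \eqref{10} with the vector that agrees with $\widehat{u}$ except in the $i$-th slot. Your explicit verification that $\widehat{V}=P\widehat{u}+Q$ componentwise, and your reading of the quantifier in Problem~\ref{3} as $Z_i\in U$ (a typo in the statement), are both consistent with what the paper implicitly assumes.
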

\begin{proof}
\textbf{``$\Rightarrow$ part"} It follows from \eqref{5} that
\begin{equation}\label{13}
\langle P\widehat{u}+Q,(0,\cdots,0,Z_i-\widehat{u}_i,0,\cdots,0)^{\top}\rangle_{H^n}\geq0, \quad \forall Z_i\in U, \;\forall i\in N.
\end{equation}
Summarizing the above inequality from $1$ to $n$ gives
$$
\langle P\widehat{u}+Q,Z-\widehat{u}\rangle_{H^n}\geq 0,\quad \forall Z\in U^{n}, \;\forall i\in N,
$$
where the arbitrariness of $Z\in U^{n}$ follows from the arbitrariness of $Z_i\in U$.

\textbf{``$\Leftarrow$ part"}
By setting
$Z=(\widehat{u}_1,\cdots,\widehat{u}_{i-1},Z_i,\widehat{u}_{i+1},\cdots,\widehat{u}_n)^{\top}$ ($\forall Z_i\in U$)
in \eqref{10}, we can obtain \eqref{13} and so the equivalence  follows immediately.
\end{proof}
By Theorems \ref{11} and \ref{59},  Problems  \ref{2} is equivalent to Problem \ref{12}. It is well-known that  solving the variational inequality \eqref{10} is equivalent to finding the fixed point $\widehat{u}\in U^n$ of the   equality
\begin{equation}\label{17}
\widehat{u}=\mathcal{P}_{U^n}[\widehat{u}-\epsilon_0(P\widehat{u}+Q)]=\mathcal{P}_{U^n}[(\mathbb{I}_{H^n}-\epsilon_0P)\widehat{u}-\epsilon_0Q],
\end{equation}
where $\epsilon_0$ is a given positive constant, $\mathcal{P}_{U^n}(\cdot)$ denotes the
metric projection operator onto $U^n$. We need the following assumption.
\begin{assumption}\label{16}
Assume that $P\in \mathcal{L}(H^n)$ defined in \eqref{15} is positive definite, i.e., there exists a constant $\epsilon_P>0$ such that
$\langle PZ,Z\rangle_{H^n}\geq \epsilon_{P}\|Z\|^2_{H^n}$ for all $Z\in H^{n}$.
\end{assumption}

The theorem below provides the solvability of \eqref{17} and  Problem \ref{12}.
\begin{thm}\label{29}
Under Assumption \ref{16}, there exists a unique solution $\widehat{u}\in U^n$ to \eqref{17} when $\epsilon_0\in(0,2\epsilon_P\|P\|_{\mathcal{L}(H^n)}^{-2})$.
\end{thm}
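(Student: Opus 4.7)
My plan is to recognize \eqref{17} as a fixed-point equation for the self-map $T: U^n \to U^n$ given by $T(u) = \mathcal{P}_{U^n}[u - \epsilon_0(Pu + Q)]$, and apply the Banach contraction mapping principle. The set $U^n$ is nonempty, closed and convex in the Hilbert space $H^n$ (as noted immediately after the definition of $U^n$), so it inherits completeness from $H^n$ under the restriction of $\|\cdot\|_{H^n}$, and $T$ takes values in $U^n$ by the very definition of the metric projection. What remains is to identify the range of $\epsilon_0$ for which $T$ is a strict contraction.

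The key step is the standard energy estimate. Using the nonexpansivity of the metric projection $\mathcal{P}_{U^n}$ onto the nonempty closed convex set $U^n$, and then expanding the squared norm of $(\mathbb{I}_{H^n} - \epsilon_0 P)(u - v)$, I obtain
\[
\|T(u) - T(v)\|_{H^n}^2 \leq \|u-v\|_{H^n}^2 - 2\epsilon_0\langle P(u-v), u-v\rangle_{H^n} + \epsilon_0^2\|P(u-v)\|_{H^n}^2.
\]
Bounding the cross term from below via Assumption \ref{16} and the quadratic term from above via the operator norm $\|P\|_{\mathcal{L}(H^n)}$ then yields the contraction factor $\sqrt{1 - 2\epsilon_0\epsilon_P + \epsilon_0^2\|P\|_{\mathcal{L}(H^n)}^2}$. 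A direct calculus check on this quadratic in $\epsilon_0$ shows it is strictly less than $1$ exactly when $\epsilon_0 \in \bigl(0,\, 2\epsilon_P\|P\|_{\mathcal{L}(H^n)}^{-2}\bigr)$, matching the stated range. Applying the Banach fixed-point theorem on the complete metric space $U^n$ then furnishes a unique $\widehat{u} \in U^n$ satisfying \eqref{17}.

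I do not anticipate any substantial obstacles; the argument is the textbook projection-gradient contraction estimate adapted to the Hilbert space $H^n$. The only mild subtlety worth flagging is that positive definiteness of $P$ enters the estimate \emph{only} through the one-sided lower bound $\langle P(u-v),u-v\rangle_{H^n}\geq\epsilon_P\|u-v\|_{H^n}^2$, so self-adjointness of $P$ is never used. This is important because in Problem \ref{2} (the NNE setting) the block operator $P$ defined in \eqref{15} need not be self-adjoint, while in the MNE setting of Remark \ref{55} it is; the existence and uniqueness conclusion therefore covers both settings uniformly, and will in the next section be specialized to the equivalent minimization formulation.
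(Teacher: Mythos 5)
Your proposal is correct and is essentially the paper's own argument: the paper likewise defines the map $M(u)=\mathcal{P}_{U^n}[(\mathbb{I}_{H^n}-\epsilon_0P)u-\epsilon_0Q]$, uses nonexpansivity of the projection together with the expansion $\|(\mathbb{I}_{H^n}-\epsilon_0P)w\|^2_{H^n}\le(1-2\epsilon_0\epsilon_P+\epsilon_0^2\|P\|^2_{\mathcal{L}(H^n)})\|w\|^2_{H^n}$, and concludes by the contraction mapping principle on the closed convex set $U^n$. Your added observation that only the one-sided coercivity bound (and not self-adjointness of $P$) is used is accurate but does not change the substance of the argument.
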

\begin{proof}
Define a mapping $M:H^n\rightarrow U^n$ by $M(u)=\mathcal{P}_{U^n}[(\mathbb{I}_{H^n}-\epsilon_0P){u}-\epsilon_0Q]$ and consider the sequence $\{u^{(m)}\}_{m=0}^{+\infty}$ with
\begin{equation}\label{81}
u^{(m+1)}=M(u^{(m)})=\mathcal{P}_{U^n}[(\mathbb{I}_{H^n}-\epsilon_0P){u^{(m)}}-\epsilon_0Q]
\end{equation}
 and $u^{(0)}(x)=(1,\cdots\,1)^{\top}$. By setting $\triangle u^{(m)}=u^{(m+1)}-u^{(m)}$ and using the non-expansive property of metric projection operators, one has
\begin{align*}
&\|\triangle u^{(m)}\|^2_{H^n}\\
=& \|\mathcal{P}_{U^n}[(\mathbb{I}_{H^n}-\epsilon_0P){u^{(m)}}-\epsilon_0Q]-\mathcal{P}_{U^n}[(\mathbb{I}_{H^n}-\epsilon_0P){u^{(m-1)}}-\epsilon_0Q]\|^2_{H^n}\\
\leq & \|(\mathbb{I}_{H^n}-\epsilon_0P){\Delta u^{(m-1)}}\|^2_{H^n}\\
\leq & \|{\Delta u^{(m-1)}}\|^2_{H^n}-2\epsilon_0\langle \Delta u^{(m-1)}, P\Delta u^{(m-1)}\rangle_{H^n}+\epsilon^2_0\langle P\Delta u^{(m-1)}, P\Delta u^{(m-1)}\rangle_{H^n}\\
\leq&  (1-2\epsilon_0\epsilon_P+\epsilon^2_0\|P\|^2_{\mathcal{L}(H^n)})\|{\Delta u^{(m-1)}}\|^2_{H^n}\|{\Delta u^{(m-1)}}\|^2_{H^n}.
\end{align*}
Observing that $U^n$ is closed and convex in $H^n$,  $M$ forms a contraction mapping and so \eqref{17} admits a unique fixed point $\widehat{u}=\lim_{m\rightarrow+\infty}u^{(m)}$, which ends the proof.
\end{proof}
In the sequel, we need the following lemma.
\begin{lemma}\label{24}
For given $\theta\in H$, the function  $g_{\theta}:\mathbb{R}\rightarrow[0,+\infty)$, defined by $g_{\theta}(\sigma)=\int_a^b(\theta(x)-\sigma)\vee 0dx$, is continuous. Moreover, equation $g_{\theta}(\sigma)=1$ admits a unique solution $\sigma\in\mathbb{R}$.
\end{lemma}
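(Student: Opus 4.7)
The plan is to establish continuity via a simple Lipschitz estimate, then combine monotonicity with the asymptotic behavior at $\pm\infty$ to obtain both existence and uniqueness of the solution to $g_\theta(\sigma) = 1$.

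First, I would verify Lipschitz continuity (hence continuity) with constant $b-a$. Using the elementary inequality $|r\vee 0 - s\vee 0|\leq |r-s|$ for any $r,s\in\mathbb{R}$, for arbitrary $\sigma_1,\sigma_2\in\mathbb{R}$ one has
\begin{equation*}
|g_\theta(\sigma_1)-g_\theta(\sigma_2)|\leq \int_a^b\bigl|(\theta(x)-\sigma_1)\vee 0-(\theta(x)-\sigma_2)\vee 0\bigr|\,dx\leq (b-a)|\sigma_1-\sigma_2|,
\end{equation*}
which gives continuity immediately.

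Next, I would analyze the limiting behavior. Since $(\theta(x)-\sigma)\vee 0\geq \theta(x)-\sigma$ pointwise and $\theta\in L^2([a,b],\mathbb{R})\subseteq L^1([a,b],\mathbb{R})$ on the bounded interval, we obtain $g_\theta(\sigma)\geq\int_a^b\theta(x)\,dx-\sigma(b-a)\to+\infty$ as $\sigma\to-\infty$. Conversely, for $\sigma\geq 0$ the integrand is dominated by $|\theta(x)|\in L^1$ and tends to $0$ pointwise as $\sigma\to+\infty$, so by dominated convergence $g_\theta(\sigma)\to 0$. Since $g_\theta$ is continuous with these limits, the intermediate value theorem yields at least one $\sigma\in\mathbb{R}$ with $g_\theta(\sigma)=1$.

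Finally, for uniqueness I would show strict monotonicity on the region where $g_\theta$ is positive. For $\sigma_1<\sigma_2$, a pointwise case analysis of the integrand $(\theta(x)-\sigma_1)\vee 0-(\theta(x)-\sigma_2)\vee 0$ shows it is non-negative everywhere, equals $\sigma_2-\sigma_1$ on $\{\theta>\sigma_2\}$, and equals $\theta(x)-\sigma_1>0$ on $\{\sigma_1<\theta\leq\sigma_2\}$. If $g_\theta(\sigma_1)>0$, the super-level set $\{\theta>\sigma_1\}$ must have positive Lebesgue measure, so at least one of the two sets above has positive measure, forcing $g_\theta(\sigma_1)>g_\theta(\sigma_2)$. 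Since any solution $\sigma$ of $g_\theta(\sigma)=1$ satisfies $g_\theta(\sigma)>0$, two distinct solutions would contradict this strict decrease.

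The proof is standard with no substantial obstacle; the only point that requires a moment of care is the uniqueness step, where one must notice that strict monotonicity only holds on the level where $g_\theta$ is positive (indeed $g_\theta$ can be constantly zero on a half-line $[\mathrm{ess\,sup}\,\theta,+\infty)$), and then verify that the target value $1$ lies safely in the strictly-decreasing regime.
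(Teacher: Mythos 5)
Your proof is correct and follows essentially the same route as the paper's: Lipschitz continuity of $g_\theta$ with constant $b-a$, the intermediate value theorem for existence (the paper exhibits explicit points $\sigma_3,\sigma_4$ with $g_\theta(\sigma_3)\le\tfrac12$ and $g_\theta(\sigma_4)\ge 2$ via Cauchy--Schwarz, whereas you compute the limits at $\pm\infty$), and uniqueness from the same pointwise decomposition of $(\theta-\sigma_1)\vee 0-(\theta-\sigma_2)\vee 0$ showing strict monotonicity wherever $g_\theta>0$. The differences are cosmetic; no gaps.
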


\begin{proof}
For given $\sigma_1,\sigma_2\in\mathbb{R}$ with $\sigma_1>\sigma_2$, we have
\begin{align}\label{28}
g_{\theta}(\sigma_1)-g_{\theta}(\sigma_2)&= \int_a^b[(\theta(x)-\sigma_1)\vee 0-(\theta(x)-\sigma_2)\vee 0]dx\nonumber\\
&= \int_a^bI_{\theta(x)>\sigma_1}(\sigma_2-\sigma_1)dx-\int_a^bI_{\sigma_2<\theta(x)\leq\sigma_1}(\theta(x)-\sigma_2)dx\leq0,
\end{align}
and so
\begin{align*}
|g_{\theta}(\sigma_1)-g_{\theta}(\sigma_2)|=&\int_a^bI_{\theta(x)>\sigma_1}(\sigma_1-\sigma_2)dx+\int_a^bI_{\sigma_2<\theta(x)\leq\sigma_1}(\theta(x)-\sigma_2)dx\\
\leq&\int_a^bI_{\theta(x)>\sigma_1}(\sigma_1-\sigma_2)dx+\int_a^bI_{\sigma_2<\theta(x)\leq\sigma_1}(\sigma_1-\sigma_2)dx\\
=&\int_a^bI_{\theta(x)>\sigma_2}(\sigma_1-\sigma_2)dx
\leq(b-a)(\sigma_1-\sigma_2).
\end{align*}
Thus $g_{\theta}(\sigma)$ is continuous with respect to $\sigma$. By selecting $\sigma_3>0$ large enough with $\int_a^bI_{\theta(x)>\sigma_3}dx\leq\frac{1}{4}\|\theta\|^{-2}_H$ and $\sigma_4=-\frac{2}{b-a}-\frac{1}{\sqrt{b-a}}\|\theta\|_H$, we deduce from
\begin{equation*}
\begin{cases}
 0\leq g_{\theta}(\sigma_3)\leq\int_a^bI_{\theta(x)>\sigma_3}\theta(x)dx\leq
(\int_a^bI_{\theta(x)>\sigma_3}dx)^{\frac{1}{2}}\|\theta\|_H\leq\frac{1}{2},\\
g_{\theta}(\sigma_4)\geq \int_a^b(\theta(x)-\sigma_4)dx\geq -\sqrt{b-a}\|\theta\|_H-(b-a)\sigma_4=2
\end{cases}
\end{equation*}
that  $1$ is in the  range of $g$, where we have used Cauchy-Schwarz's inequality. This indicates that there exists at least one solution $\sigma\in[\sigma_3\wedge\sigma_4,\sigma_3\vee\sigma_4]$ to $g_{\theta}(\sigma)=1$. Furthermore, we observe from \eqref{28} that if in addition $g_{\theta}(\sigma_1)=g_{\theta}(\sigma_2)=1$, then $I_{\theta(x)<\sigma_2}=1$ a.e. $x\in[a,b]$. This implies $g_{\theta}(\sigma_2)=\int_a^b(\theta(x)-\sigma_2)\vee 0dx=0$ and contradicts $g_{\theta}(\sigma_2)=1$. Thus the uniqueness of $\sigma$ follows immediately.
\end{proof}
In general, it is very hard to solve the iteration  \eqref{81} directly because of the complexity of the density constraints in $U^n$. However,  following the methods in \citep[Lemma 3.3]{yong2020stochastic}, we have the following result, which transforms the iteration  \eqref{81} into    simpler one.
\begin{thm}\label{18}
Under Assumption \ref{16}, for fixed $u^{(m)}\in U^n$ $(m=1,2,\cdots)$, the projection
$$
u^{(m+1)}=\mathcal{P}_{U^n}[(\mathbb{I}_{H^n}-\epsilon_0P){u^{(m)}}-\epsilon_0Q]
$$
 can be solved by the following system of projection equation with integral constraints:
  \begin{equation}\label{26}
   \begin{cases}
u^{(m+1)}(x)=\mathcal{P}_{\mathbb{R}^n_+}[u^{(m)}_{\lambda}(x)]\; \mbox{a.e.} \;x\in[a,b],\\
\mbox{s.t.}\quad \langle u^{(m+1)},e_i\rangle_{H^n}=1,\quad\forall i\in N,
   \end{cases}
 \end{equation}
 where  $\mathbb{R}^n_+=\{v=(v_1,\cdots,v_n)^{\top}\in  \mathbb{R}^n\bigg|v_i\geq0,\;\forall i\in N\}$, $\lambda^{(m)}=(\lambda^{(m)}_1,\cdots,\lambda^{(m)}_n)^{\top}$, and $u^{(m)}_{\lambda}= (\mathbb{I}_{H^n}-\epsilon_0P){u^{(m)}}+\epsilon_0Q+\lambda^{(m)}$.  Furthermore,  $\lambda^{(m)}$ is uniquely determined.
\end{thm}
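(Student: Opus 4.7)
\textbf{Proof proposal for Theorem \ref{18}.} The plan is to reformulate the projection $u^{(m+1)}=\mathcal{P}_{U^n}[\theta]$, with $\theta=(\mathbb{I}_{H^n}-\epsilon_0P)u^{(m)}-\epsilon_0Q$, as a constrained quadratic minimization, and then use Lagrange multipliers to separate the finitely many integral equality constraints defining $U^n$ from the pointwise nonnegativity cone. The integral constraints generate an $n$-vector of multipliers, while the pointwise cone reduces, after shifting by this multiplier, to a pointwise projection onto $\mathbb{R}^n_+$; uniqueness of the multiplier will then come directly from Lemma~\ref{24}, applied componentwise.

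First I would observe that $u^{(m+1)}$ is the unique minimizer of $J(u)=\tfrac12\|u-\theta\|_{H^n}^2$ over the nonempty, closed, convex set $U^n$, and decompose $U^n=\widetilde U\cap K$, where $\widetilde U=\{u\in H^n:\langle u,e_i\rangle_{H^n}=1,\ \forall i\in N\}$ is a closed affine subspace determined by $n$ scalar equalities and $K=\{u\in H^n:u_i(x)\geq0\ \text{a.e.},\ \forall i\in N\}$ is a closed convex cone. Since the equality constraints are finite in number, I would introduce a multiplier $\lambda=(\lambda_1,\dots,\lambda_n)^{\top}\in\mathbb{R}^n$ and form the Lagrangian
$$L(u,\lambda)=\tfrac12\|u-\theta\|_{H^n}^2-\sum_{i\in N}\lambda_i(\langle u,e_i\rangle_{H^n}-1).$$
A Slater-type qualification is met by the strictly positive constant density $u_0\equiv((b-a)^{-1},\dots,(b-a)^{-1})^{\top}\in U^n$, which lies in the interior of $K$, so strong duality holds and there exists a saddle point $(u^{(m+1)},\lambda^{(m)})$ of $L$ on $K\times\mathbb{R}^n$.

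Given $\lambda^{(m)}$, the inner minimization of $L(\cdot,\lambda^{(m)})$ over $K$ decouples pointwise: completing the square gives
$$L(u,\lambda^{(m)})=\tfrac12\int_a^b\|u(x)-(\theta(x)+\lambda^{(m)})\|_{\mathbb{R}^n}^2\,dx+\text{const},$$
so for almost every $x\in[a,b]$, $u^{(m+1)}(x)=\mathcal{P}_{\mathbb{R}^n_+}[\theta(x)+\lambda^{(m)}]$, which matches the first line of \eqref{26} with $u^{(m)}_{\lambda}=\theta+\lambda^{(m)}$. The saddle-point identities $\langle u^{(m+1)},e_i\rangle_{H^n}=1$ then fix $\lambda^{(m)}$.

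For uniqueness of $\lambda^{(m)}$, because $\mathcal{P}_{\mathbb{R}^n_+}$ acts componentwise, the $i$-th integral constraint $\int_a^b[\theta_i(x)+\lambda_i^{(m)}]_+\,dx=1$ involves only $\lambda_i^{(m)}$. Setting $\sigma_i=-\lambda_i^{(m)}$ rewrites this as $g_{\theta_i}(\sigma_i)=1$ in the notation of Lemma~\ref{24}, which then provides a unique $\sigma_i\in\mathbb{R}$, hence a unique $\lambda_i^{(m)}$. The main obstacle I anticipate is rigorously verifying strong duality / the KKT conditions in this infinite-dimensional Hilbert-space setting, but the presence of the interior feasible point $u_0$ reduces this to a standard Slater argument; once that is in hand, the pointwise decoupling and Lemma~\ref{24} finish the argument cleanly.
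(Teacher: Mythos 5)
Your overall strategy---recast the projection as a constrained quadratic minimization, use a finite vector of Lagrange multipliers to absorb the $n$ integral equality constraints, decouple the remaining problem pointwise into a projection onto $\mathbb{R}^n_+$, and pin down the multiplier componentwise via Lemma~\ref{24}---is essentially the paper's argument. However, there is one genuine gap: your justification of strong duality rests on the claim that the constant density $u_0\equiv((b-a)^{-1},\dots,(b-a)^{-1})^{\top}$ lies in the interior of the cone $K=\{u\in H^n: u_i(x)\geq 0\ \text{a.e.}\}$. This is false: in $L^2([a,b])$ the cone of a.e.\ nonnegative functions has \emph{empty} interior (any $L^2$-ball around a positive constant contains functions that are negative on a set of small but positive measure), so the Slater condition as you state it cannot be met and the existence of the saddle point $(u^{(m+1)},\lambda^{(m)})$ is not established by your argument. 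The correct constraint qualification here is not an interior point of $K$ but an interior point of the \emph{image} of $K$ under the finitely many constraint functionals: the map $u\mapsto(\langle u,e_1\rangle_{H^n},\dots,\langle u,e_n\rangle_{H^n})$ sends $K$ onto $[0,+\infty)^n$, and $(1,\dots,1)^{\top}$ lies in the interior of that set, which is what licenses the finite-dimensional multiplier.

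Alternatively---and this is closer to what the paper actually does---you can bypass duality entirely and make your last step carry the whole load: use Lemma~\ref{24} to \emph{define} $\lambda^{(m)}_i$ as the unique solution of $\int_a^b(\theta_i(x)+\lambda_i^{(m)})\vee 0\,dx=1$, set $u^{(m+1)}(x)=\mathcal{P}_{\mathbb{R}^n_+}[\theta(x)+\lambda^{(m)}]$, check $u^{(m+1)}\in U^n$, and then verify directly that $u^{(m+1)}$ satisfies the variational characterization $\langle u^{(m+1)}-\theta,Z-u^{(m+1)}\rangle_{H^n}\geq 0$ for all $Z\in U^n$ of the metric projection onto $U^n$. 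The key observation is that the multiplier term drops out on the feasible set: $\langle\lambda^{(m)},Z-u^{(m+1)}\rangle_{H^n}=\sum_i\lambda_i^{(m)}(1-1)=0$, while $\langle u^{(m+1)}-\theta-\lambda^{(m)},Z-u^{(m+1)}\rangle_{H^n}\geq 0$ follows by integrating the pointwise projection inequality (the paper proves this last equivalence between the $H^n$-level and pointwise variational inequalities by a contradiction argument on a set of positive measure). With that replacement your proof is complete; the componentwise use of Lemma~\ref{24} for uniqueness of $\lambda^{(m)}$ is exactly the paper's.
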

\begin{proof}
It is easy to check that $u^{(m+1)}$ is the unique solution to the following   optimization problem
\begin{equation}\label{27}
 \min_{u^{(m+1)}\in U^n} \frac{1}{2}\|u^{(m+1)}\|^2_{H^n}-\langle (\mathbb{I}_{H^n}-\epsilon_0P){u^{(m)}}-\epsilon_0Q,u^{(m+1)}\rangle_{H^n}.
\end{equation}
By introducing   Lagrange multiplier $\lambda^{(m)}$, we know that $u^{(m+1)}\in U^n$  is  the unique solution to the   optimization problem:
  \begin{equation*}
   \begin{cases}
\min_{u^{(m+1)}\in \overline{U}^n}[ \frac{1}{2}\|u^{(m+1)}\|^2_{H^n}-\langle u^{(m)}_{\lambda},u^{(m+1)}\rangle_{H^n}-\sum_{i\in N}\lambda^{(m)}_i],\\
\mbox{s.t.}\quad \langle u^{(m+1)},e_i\rangle_{H^n}=1,\;\forall i\in N,
\end{cases}
 \end{equation*}
where $$\overline{U}^n=\{v=(v_1,\cdots,v_n)^{\top}\in H^n|\langle v(x),e_i\rangle_{\mathbb{R}^n}\geq0,\;\mbox{a.e.}\;x\in[a,b]\;,\;\forall i\in N\}.$$ 
The optimization problem mentioned above enables us to deal with the integral constraints and the value constraints separately, and is clearly equivalent to the    variational inequality:
 \begin{equation}\label{25}
   \begin{cases}
\langle u^{(m+1)}-u^{(m)}_{\lambda},Z-u^{(m+1)}\rangle_{H^n}\geq0,\quad\forall Z\in \overline{U}^n,\\
\mbox{s.t.}\quad \langle u^{(m+1)},e_i\rangle_{H^n}=1,\;\forall i\in N,
   \end{cases}
 \end{equation}
where the solution $u^{(m+1)}\in \overline{U}^n$. The existence and uniqueness of $\lambda^{(m)}$ will be discussed later. Now we show  that \eqref{25} can be transformed to the following  variational inequality with integral constraints:
  \begin{equation}\label{19}
   \begin{cases}
\langle u^{(m+1)}(x)-u^{(m)}_{\lambda}(x),Z(x)-u^{(m+1)}(x)\rangle_{\mathbb{R}^n}\geq0,\;\mbox{a.e.}\;x\in[a,b],\forall Z(x)\in \mathbb{R}^n_+,\;\\
\mbox{s.t.}\quad \langle u^{(m+1)},e_i\rangle_{H^n}=1,\;\forall i\in N,
   \end{cases}
 \end{equation}
where $u^{(m+1)},Z\in  \overline{U}^n$. Taking integral on both sides in \eqref{19}, we know that \eqref{19}$\Rightarrow$ \eqref{25}. Now we assert that the converse still holds true. Assume \eqref{19} fails and define the set
$$
\Lambda_{\epsilon}=\{x\in\mathbb{R}^n|\langle u^{(m+1)}(x)-u^{(m)}_{\lambda}(x),Z_0(x)-u^{(m+1)}(x)\rangle_{\mathbb{R}^n}\leq-\epsilon\}
$$
 for some vector $Z_0(x)\in \mathbb{R}^n_+$ ($Z_0\in \overline{U}^n$) and some constant $\epsilon>0$, then  $\int_a^b I_{\Lambda_{\epsilon}}(x)dx>0$. Choosing $Z\in \overline{U}^n$ defined by
 $Z(x)=Z_0(x)I_{\Lambda_{\epsilon}}(x)+u^{(m+1)}(x)(1-I_{\Lambda_{\epsilon}}(x))$ ($x\in[a,b]$)
in \eqref{25}, we get
$
 \langle u^{(m+1)}-u^{(m)}_{\lambda},Z-u^{(m+1)}\rangle_{H^n}
\leq\mbox{}-\epsilon\int_a^b I_{\Lambda_{\epsilon}}(x)dx<0.
$
 This contradicts \eqref{25}, and thus \eqref{19} and \eqref{25} are equivalent. Furthermore,   projection equation  \eqref{26} is clearly equivalent to  \eqref{19}.  By using the fact that
 $$
 1=\langle u^{(m+1)},e_i\rangle_{H^n}=\int_a^b(\langle[(\mathbb{I}_{H^n}-\epsilon_0P){u^{(m)}}](x)-\epsilon_0Q(x),e_i\rangle_{\mathbb{R}^n}-\lambda^{(m)}_i)\vee0dx
 $$
for any $i\in N$, and letting $\theta(x)=\langle[(\mathbb{I}_{H^n}-\epsilon_0P){u^{(m)}}](x)-\epsilon_0Q(x),e_i\rangle_{\mathbb{R}^n}$, one has $\theta\in H$. So the existence and uniqueness of $\lambda_i^{(m)}$ ($\forall i\in N$) in \eqref{26} follows from Lemma \ref{24} directly. Finally, the unique solution $u^{(m+1)}(x)$ to projection equation  \eqref{26} satisfies
\begin{align*}
 \|u^{(m+1)}\|_{H^n}^2=&\int_a^b \|\mathcal{P}_{\mathbb{R}^n_+}([(\mathbb{I}_{H^n}-\epsilon_0P){u^{(m)}}](x)-\epsilon_0Q(x)-\lambda^{(m)})\|_{\mathbb{R}^n}^2dx\\
 \leq&\int_a^b \|[(\mathbb{I}_{H^n}-\epsilon_0P){u^{(m)}}](x)-\epsilon_0Q(x)-\lambda^{(m)}\|_{\mathbb{R}^n}^2dx \\
 \leq&3\|\mathbb{I}_{H^n}-\epsilon_0P\|_{\mathcal{L}(H^n)}^2\|u^{(m)}\|_{H^n}^2+3\epsilon_0^2\|Q^{(m)}\|_{H^n}^2+3\|\lambda^{(m)}\|_{H^n}^2.
\end{align*}
and so $u^{(m+1)}\in U^n$, which completes the proof.
 \end{proof}
The theorem below provides the convergence of the sequence $\{\lambda^{(m)}\}_{m=1}^{+\infty}$.
\begin{thm}\label{49}
Under Assumption \ref{16}, the limit $\lambda=\lim_{m\rightarrow+\infty}\lambda^{(m)}$ exists and satisfies the following   projection equation with integral constraints:
  \begin{equation}\label{30}
   \begin{cases}
\widehat{u}(x)=\mathcal{P}_{\mathbb{R}^n_+}([(\mathbb{I}_{H^n}-\epsilon_0P){\widehat{u}}](x)-\epsilon_0Q(x)-\lambda),\quad \mbox{a.e.} \;x\in[a,b],\\
\mbox{s.t.}\quad \langle \widehat{u},e_i\rangle_{H^n}=1,\quad\forall i\in N,
   \end{cases}
 \end{equation}
 where $\widehat{u}$ is the unique solution to Problem \ref{12}, and is as well the unique NNE of Problem \ref{2}.
\end{thm}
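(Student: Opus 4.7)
The strategy is to combine the two ingredients already in place: Theorem \ref{29} provides $u^{(m)} \to \widehat u$ in $H^n$, and Theorem \ref{18} characterizes each iterate via the finite-dimensional projection equation together with a uniquely determined multiplier $\lambda^{(m)}\in\mathbb{R}^n$. Passing $m \to \infty$ in that characterization yields \eqref{30}, provided that $\{\lambda^{(m)}\}$ converges; the identification of the limit $\widehat u$ as the unique NNE is then a consequence of Theorems \ref{11} and \ref{59}. Hence the only genuine work is (a) convergence of $\lambda^{(m)}$ and (b) passing to the limit inside the projection $\mathcal{P}_{\mathbb{R}^n_+}$ and inside the integral constraints.

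For (a), fix $i\in N$ and set $\theta^{(m)}(x) := \langle[(\mathbb{I}_{H^n}-\epsilon_0 P)u^{(m)}](x)-\epsilon_0 Q(x),e_i\rangle_{\mathbb{R}^n}$. Since $\mathbb{I}_{H^n}-\epsilon_0 P\in\mathcal{L}(H^n)$ and $u^{(m)}\to \widehat u$ in $H^n$, we have $\theta^{(m)}\to \theta$ in $H$, where $\theta$ is defined analogously with $\widehat u$ in place of $u^{(m)}$. By Theorem \ref{18}, $\lambda_i^{(m)}$ is the unique root of $g_{\theta^{(m)}}(\sigma)=1$. The cutoffs $\sigma_3,\sigma_4$ constructed in Lemma \ref{24} depend only on $\|\theta^{(m)}\|_H$ and $b-a$; since $\{\|\theta^{(m)}\|_H\}$ is bounded, the sequence $\{\lambda_i^{(m)}\}$ lies in a compact interval. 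For any subsequential limit $\bar\lambda_i$, the elementary estimate
\[
|g_{\theta^{(m)}}(\sigma)-g_\theta(\sigma)|\le\sqrt{b-a}\,\|\theta^{(m)}-\theta\|_H
\]
gives uniform convergence in $\sigma$, which combined with the continuity of $g_\theta$ from Lemma \ref{24} yields $g_\theta(\bar\lambda_i)=1$. The uniqueness part of Lemma \ref{24} then forces $\bar\lambda_i=\lambda_i$, so the whole sequence $\lambda_i^{(m)}\to \lambda_i$. Setting $\lambda=(\lambda_1,\dots,\lambda_n)^\top$ completes (a).

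For (b), extract an a.e.\ pointwise convergent subsequence of $\{u^{(m)}\}$ from the $H^n$-convergence and pass to the limit in the identity $u^{(m+1)}(x)=\mathcal{P}_{\mathbb{R}^n_+}(u^{(m)}_\lambda(x))$ of Theorem \ref{18}, using the $1$-Lipschitz property of $\mathcal{P}_{\mathbb{R}^n_+}$ together with (a). This delivers the pointwise equation in \eqref{30}. The integral constraints $\langle\widehat u,e_i\rangle_{H^n}=1$ survive the limit because $u^{(m)}\in U^n$ for every $m$ and the inner product against the fixed constant function $e_i$ is continuous on $H^n$. That $\widehat u$ coincides with the unique solution of Problem \ref{12} is Theorem \ref{29}, and its identification with the unique NNE of Problem \ref{2} follows from Theorems \ref{59} and \ref{11}. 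The main obstacle is step (a): $g_\theta(\sigma)$ is only Lipschitz in $\sigma$ with no quantitative lower bound on its rate of decrease near the root, so the inverse map $\theta\mapsto\sigma(\theta)$ is not obviously continuous; the argument instead leverages compactness of the root set together with the uniqueness in Lemma \ref{24} to force convergence of the full sequence rather than of a subsequence.
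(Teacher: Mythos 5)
Your proposal is correct and follows the same overall route as the paper: pass to the limit in the iteration \eqref{26} using Theorem \ref{29} and the continuity of $\mathcal{P}_{\mathbb{R}^n_+}$, then identify the limiting multiplier via the uniqueness statement of Lemma \ref{24}. The substantive difference is in how the convergence of $\{\lambda^{(m)}\}$ is handled. The paper's proof first produces the unique $\lambda$ solving the limiting constraint equation, then simply writes $\lim_{m\to+\infty}\lambda^{(m)}$ inside the limit of \eqref{26} and concludes $\lambda=\lim_{m\to+\infty}\lambda^{(m)}$ --- it never actually establishes that this limit exists, which is the main content of the theorem's first assertion. You supply exactly this missing step: the uniform bound on the roots coming from the explicit cutoffs $\sigma_3,\sigma_4$ of Lemma \ref{24} (which depend only on $\|\theta^{(m)}\|_H$ and $b-a$, hence are uniform since $u^{(m)}\to\widehat{u}$ in $H^n$), the estimate $|g_{\theta^{(m)}}(\sigma)-g_{\theta}(\sigma)|\le\sqrt{b-a}\,\|\theta^{(m)}-\theta\|_H$ giving uniform convergence of $g_{\theta^{(m)}}$ to $g_{\theta}$, and the compactness-plus-uniqueness argument forcing every subsequential limit of $\lambda_i^{(m)}$ to coincide with the unique root $\lambda_i$ of $g_{\theta}(\sigma)=1$. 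This buys a rigorous proof of the convergence that the paper takes for granted, at the cost of a slightly longer argument; your remark that the inverse map $\theta\mapsto\sigma(\theta)$ has no quantitative modulus of continuity correctly identifies why the detour through compactness is needed. The remaining steps (passing to the limit a.e.\ in the projection identity and preserving the integral constraints, and the identification of $\widehat{u}$ via Theorems \ref{11}, \ref{59} and \ref{29}) match the paper.
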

 \begin{proof}
 By Lemma \ref{24}, one can derive from
  $$
 1=\langle \widehat{u},e_i\rangle_{H^n}=\int_a^b(\langle[(\mathbb{I}_{H^n}-\epsilon_0P){\widehat{u}}](x)-\epsilon_0Q(x),e_i\rangle_{\mathbb{R}^n}-\lambda_i)\vee0dx,
 \quad \forall i\in N
 $$
 that there exists a unique $\lambda_i\in \mathbb{R}$ ($\forall i\in N$) satisfying \eqref{30}. On the other hand, since the projection operator $\mathcal{P}_{\mathbb{R}^n_+}:\mathbb{R}^n\rightarrow{\mathbb{R}^n_+}$ is continuous, letting $m\rightarrow+\infty$ in \eqref{26} and applying Theorem \ref{29} deduce that
   \begin{equation*}
   \begin{cases}
\widehat{u}(x)=\mathcal{P}_{\mathbb{R}^n_+}([(\mathbb{I}_{H^n}-\epsilon_0P){\widehat{u}}](x)-\epsilon_0Q(x)-\lim_{m\rightarrow+\infty}\lambda^{(m)}),\quad \mbox{a.e.} \;x\in[a,b],\\
\mbox{s.t.}\quad \langle \widehat{u},e_i\rangle_{H^n}=1,\quad\forall i\in N,
   \end{cases}
 \end{equation*}
where $\widehat{u}$ uniquely solves Problem \ref{12}. Therefore, $\lambda=\lim_{m\rightarrow+\infty}\lambda^{(m)}$. According to  Theorem \ref{59}, $\widehat{u}$ is   the unique NNE of Problem \ref{2}.
 \end{proof}
\begin{remark}\label{80}
It is worth mentioning that one may consider  the time-dependent density $u_i(t,x)$ $(\forall i\in N)$ if the firms are allowed to choose different production $x$ as  time varies, where $\int_a^bu_i(t,x)dx=1$ for any $t\in[s,T]$, and for each $i\in N$, $u_i(t,x)\geq0$ a.e. $x\in[a,b]$ and any $t\in[s,T]$. However, the techniques of this paper may not be available to such a problem because that the finite-dimensional variational inequality is expected to hold  in the sense of ``a.e. $t\in[s,T]$".

\begin{example}\label{58}
For given $g\in H$, let $(W_2g)(x)=2\int_0^1\cos(\pi(x-y))g(y)dy$ ($x\in[a,b]$),  then one can regard  $W_2$ as an  operator in $\mathcal{L}(H)$ ($W_2$ is also well-known as the graphon operator (\citep{carmona2022stochastic})). Consider the controlled state equation
\begin{equation*}
\begin{cases}
dX(t)=\left[f-u\right]dt,\quad t\in[0,1],\\
X(0)=\xi_0,
\end{cases}
\end{equation*}
where $u(x)$ stands for the density of choosing strategy $x$ in the single market, $\xi_0\in H$ is given, $f(x)=\frac{1}{2}\sin(\pi x)$ ($x\in[a,b]=[0,1]$), and the  payoffs of the single market
$$
V=J(u)=\beta-\frac{5\pi}{4}\cos(\pi x)+2\int_0^1W_2X(t)dt+\int_0^1u_1dt-2X(T),
$$
where $\beta$ is a given constant function in $H$. In the dynamic Cournot competition, the firms need to find a strategy $\widehat{u}\in U$ such that for a.e. $x\in[0,1]$,
\begin{equation}\label{43}
\widehat{u}(x)>0\Rightarrow \widehat{V}(x)\geq \left\langle \widehat{V},\widehat{u}\right\rangle_H.
\end{equation}
One can check that $Y(t)=-2tW_2$, $P=\mathbb{I}_{H}+W_2$ and $Q(x)=\beta-\frac{5\pi}{4}\cos(\pi x)-\frac{1}{2}\sin (\pi x)$.  According to Remark \ref{55}, it is easy to show that $P=\mathbb{I}_{H}+W_2$ is self-adjoint and thus $V=Pu+Q$ is a full potential game.
Noticing that $W_2^2=W_2$ and $\|Z\|^2_{H}\leq\langle PZ,Z\rangle_H\leq2\|Z\|^2_{H}$ $(\forall Z\in H)$, and applying Theorem \ref{29}, we can choose $\epsilon_0=\frac{1}{3}$. In this case, \eqref{30} becomes
  \begin{equation}\label{31}
   \begin{cases}
\widehat{u}(x)=\left(\left[\frac{2}{3}\widehat{u}(x)-\frac{1}{3}W{\widehat{u}}(x)\right]-\frac{1}{3}Q(x)-\lambda\right)\vee 0,\quad x\in[0,1],\\
\mbox{s.t.}\quad \int_0^1 \widehat{u}(x)dx=1.
   \end{cases}
 \end{equation}
With $\lambda=-\frac{1}{3}\beta$, by simple calculations one can show that $\widehat{u}:\widehat{u}(x)=\pi\cos(\pi x)\vee0$ ($x\in[0,1]$) is the unique solution to \eqref{31}. Therefore, according to Theorem \ref{29}, $\widehat{u}:\widehat{u}(x)=\pi\cos(\pi x)\vee0$ ($x\in[0,1]$)  is the unique NNE  of such an ID-NDG.
Indeed, we have
 $$
 \widehat{V}(x)=\beta+(-\pi\cos(\pi x))\vee0,\quad\left\langle \widehat{V},\widehat{u}\right\rangle_H=\beta\quad\mbox{and}\quad\widehat{V}(x)\geq \left\langle \widehat{V},\widehat{u}\right\rangle_H,\quad \forall x\in[0,1].
 $$
Therefore, inequality \eqref{43} can be satisfied.
\end{example}
\end{remark}

\subsection{Stability of   NNEs}
In this subssection, we discuss the stability of the NNEs to Problem \ref{12}. To this end, we assume that the sequences $\{\varepsilon_{1k},\varepsilon^{(i)}_{2k}\}_{k=1}^{+\infty}\subseteq \mathbb{R}$, $\{A_{ik},B_{ijk},E_{ik},F_{ik}\}_{k=1}^{+\infty}\subseteq C([s,T],\mathcal{L}(H))$, $\{\xi_{ik},f_{ik},\alpha_{ik}\}_{k=1}^{+\infty}\subseteq H$, and $\{G_{ik}\}_{k=1}^{+\infty}\subseteq \mathcal{L}(H)$  ($i\in N$) are all given. Consider the perturbed controlled state equations ($i\in N$),
\begin{equation}\label{46}
dX_{ik}(t)=[A_{ik}(t)X_{ik}(t)+\sum_{j\in N}B_{ijk}(t)v_{jk}+f_{ik}(t)]dt,\quad X_{ik}(s)=\xi_{ik}\in H,
\end{equation}
and the set
\begin{align}\label{79}
U^n_{\varepsilon}=\{v\in H^n|\langle v,e_i\rangle_{H^n}=1+\varepsilon_{1k}
\;\mbox{and}\;\langle v(x),e_i\rangle_{\mathbb{R}^n}\geq\varepsilon^{(i)}_{2k}\mbox{ a.e. }x\in[a,b]\;,\;\forall i\in N\}.
\end{align}
where ${v}_{ik}(x)$ stands for the density of choosing strategy $x$ in the $i$-th market. Then, the approximate ID-NDG problem can be defined as follows.
\begin{prob}\label{39}
Find an NNE  $\widehat{v}_k=(\widehat{v}_{1k},\cdots,\widehat{v}_{nk})^{\top} \in U^n_{\varepsilon}$ such  that for any $i\in N$ and a.e. $x\in[a,b]$,
$
\widehat{v}_{ik}(x)>0\quad\Rightarrow\quad\widehat{V}_{ik}(x)\geq\langle\widehat{V}_{ik},\widehat{v}_i\rangle_{H},
$
where $
\widehat{V}_{ik}=\alpha_{ik}+\int_s^T(E_{ik}(t)\widehat{X}_{ik}(t)+F_{ik}(t)\widehat{v}_{ik})dt+G_{ik}\widehat{X}_{ik}(T).
$
represents the optimal payoffs in the $i$-th market. Here   $\alpha_{ik}\in H$, $G_{ik}\in\mathcal{L}(H)$, $E_{ik},F_{ik}\in C([s,T],\mathcal{L}(H))$ are all given, and $\widehat{X}_{ik}(t)$ is governed by:
\begin{equation}\label{71}
d\widehat{X}_{ik}(t)=[A_{ik}(t)\widehat{X}_{ik}(t)+\sum_{j\in N}B_{ijk}(t)\widehat{v}_{jk}+f_{ik}(t)]dt,\quad \widehat{X}_{ik}(s)=\xi_{ik}\in H,
\end{equation}
\end{prob}
By similar arguments as in subsection \ref{40}, one can show that Problem \ref{39} is equivalent to the following Problem.
\begin{prob}\label{32}
Find $\widehat{v}_k=(\widehat{v}_{1k},\cdots,\widehat{v}_{nk})^{\top} \in U^n_{\varepsilon}$ such  that
\begin{equation}\label{35}
\langle P_k\widehat{v}_k+Q_k,Z-\widehat{v}_k\rangle_{H^n}\geq 0,\quad \forall Z\in U^n_{\varepsilon}.
\end{equation}
Here  the coefficients  $P_k$ and $Q_k$ are given by
\begin{equation*}
\begin{cases}
P_k:H^n\rightarrow H^n,\;P_k\widehat{v}_k=\int_s^TP_k(t)\widehat{v}_kdt, \quad \mathcal{P}_{ik}(t)=Y_{ik}(t)B_{iik}(t)+F_{ik}(t),\\
\overline{Y}_{ik}=\alpha_{ik}+Y_{ik}(s)\xi_{ik}+\int_s^TY_{ik}(t)f_{ik}(t)dt, \quad Q_k=(\overline{Y}_{1k},\cdots,\overline{Y}_{nk})^{\top},\\
P_k(t)=\begin{bmatrix}
\mathcal{P}_{1k}(t) & Y_{1k}(t)B_{12k}(t)&\dots & Y_{1k}(t)B_{1nk}(t)\\
  Y_{2k}(t)B_{21k}(t)& \mathcal{P}_{2k}(t)&\dots & Y_{2k}(t)B_{2nk}(t)\\
\vdots &\vdots & \ddots&\vdots\\
Y_{nk}(t)B_{n1k}(t)&Y_{nk}(t)B_{n2k}(t)&\dots & \mathcal{P}_{nk}(t)
\end{bmatrix}_{n \times n},
\end{cases}
\end{equation*}
where  $Y_{ik}\in C([s,T],\mathcal{L}(H))$ ($i\in N$) is the strong solution to the equation
\begin{equation}\label{72}
dY_{ik}(t)=-(E_{ik}(t)+Y_{ik}(t)A_{ik}(t))dt,\quad Y_{ik}(T)=G_{ik}.
\end{equation}
\end{prob}
By Lemma  \ref{8} and Theorem \ref{23}, the existence and uniqueness of $\widehat{X}_{ik}\in C([s,T],H)$ to \eqref{71} and $Y_{ik}\in C([s,T],\mathcal{L}(H))$ to \eqref{72} can be guaranteed. The stability of the NNE and the OWAP  of  the markets can be defined as follows.
\begin{defn}
The NNE   (or the OWAP) is said to be stable (or inherit the property of stability), if the following holds under Assumption \ref{21}:
\begin{equation}\label{73}
\lim_{k\rightarrow+\infty}\|\widehat{u}-\widehat{v}_k\|_{H^n}=0 \quad(\mbox{or } \langle \widehat{u},\widehat{V}\rangle_H=\lim_{k\rightarrow+\infty}\langle \widehat{v}_k,\widehat{W}_k\rangle_H),
\end{equation}
 where $\widehat{v}_k$ solves Problem \ref{32}, and $\widehat{W}_k=(\widehat{V}_{1k},\cdots,\widehat{V}_{nk})^{\top}=P_k\widehat{v}_k+Q_k.$
\end{defn}
\begin{remark}
The stability of the NNE and the OWAP  provide that both of them can be approximated   by their counterparts in Problem \ref{32} when $k$ is large enough.
\end{remark}
 In this subsection we  focus on answering  two questions: (A) Does there exists unique solution $\widehat{v}_k\in U^n_{\varepsilon}$ to Problem \ref{32} for a fixed $k$? (B) Can the NNE    and OWAP be stable? To this end,  we need the following assumption.
\begin{assumption}\label{21}
Let $B=[B_{ij}]_{i,j\in N}$ and $B_k=[B_{ijk}]_{i,j\in N}$. Assume that, for each $i\in N$ we have $\min_{k}\varepsilon_{1k}>-1$ and
\begin{equation*}
  \begin{cases}
  \lim_{k\rightarrow+\infty}\varepsilon_{1k}=\lim_{k\rightarrow+\infty}\varepsilon_{2k}^{(i)}=0,\quad\lim_{k\rightarrow+\infty}\|G_i-G_{ik}\|_{\mathcal{L}(H)}=0,\\
    \lim_{k\rightarrow+\infty}\|A_i-A_{ik}\|_{ C([s,T],\mathcal{L}(H))}=\lim_{k\rightarrow+\infty}\|B-B_{k}\|_{ C([s,T],\mathcal{L}(H^n))}=0, \\
  \lim_{k\rightarrow+\infty}\|\xi_i-\xi_{ik}\|_H=\lim_{k\rightarrow+\infty}\|f_i-f_{ik}\|_{H}=\lim_{k\rightarrow+\infty}\|\alpha_i-\alpha_{ik}\|_{H}=0, \\
  \lim_{k\rightarrow+\infty}\|E_i-E_{ik}\|_{ C([s,T],\mathcal{L}(H))}=\lim_{k\rightarrow+\infty}\|F_i-F_{ik}\|_{ C([s,T],\mathcal{L}(H))}=0.
  \end{cases}
\end{equation*}
\end{assumption}
Under Assumption \ref{21}, it is easy to see that    sequences of the norms on the operators    are all bounded in $\mathbb{R}$.   We also need the following two lemmas.
\begin{lemma}\label{14}
Under Assumption \ref{21}, $\lim_{k\rightarrow+\infty}\|Y_i-Y_{ik}\|_{ C([s,T],\mathcal{L}(H))}=0$.
\end{lemma}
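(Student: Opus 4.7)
The plan is to mimic the integral-equation/Gronwall argument that underlies the proof of Theorem 2.3 (existence and uniqueness of $Y_i$), but applied now to the difference $\Delta Y := Y_i - Y_{ik}$. Writing both $Y_i$ and $Y_{ik}$ in the strong form of Definition 2.2 and subtracting, I would split the cross term via
\begin{equation*}
Y_i(\tau)A_i(\tau)-Y_{ik}(\tau)A_{ik}(\tau)=\Delta Y(\tau)A_i(\tau)+Y_{ik}(\tau)(A_i(\tau)-A_{ik}(\tau)),
\end{equation*}
which produces, for every $\eta\in H$ and every $t\in[s,T]$,
\begin{equation*}
\Delta Y(t)\eta=(G_i-G_{ik})\eta+\int_t^T\!\!\bigl[(E_i-E_{ik})(\tau)\eta+\Delta Y(\tau)A_i(\tau)\eta+Y_{ik}(\tau)(A_i-A_{ik})(\tau)\eta\bigr]d\tau.
\end{equation*}

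Next, I would need a uniform bound $\sup_k\|Y_{ik}\|_{C([s,T],\mathcal{L}(H))}\le M$. This follows by applying the triangle inequality to the defining integral identity of $Y_{ik}$, dividing by $\|\eta\|_H$, taking the supremum over nonzero $\eta$, and applying a backward Gronwall inequality; Assumption \ref{21} ensures that $\|G_{ik}\|$, $\|E_{ik}\|_C$ and $\|A_{ik}\|_C$ are all bounded sequences, so $M$ can indeed be taken finite and independent of $k$. Performing the same sup-over-$\eta$ trick on the displayed equation for $\Delta Y(t)\eta$ then yields
\begin{equation*}
\|\Delta Y(t)\|_{\mathcal{L}(H)}\le \delta_k+\|A_i\|_{C([s,T],\mathcal{L}(H))}\int_t^T\|\Delta Y(\tau)\|_{\mathcal{L}(H)}d\tau,
\end{equation*}
where
\begin{equation*}
\delta_k=\|G_i-G_{ik}\|_{\mathcal{L}(H)}+(T-s)\|E_i-E_{ik}\|_{C([s,T],\mathcal{L}(H))}+(T-s)M\|A_i-A_{ik}\|_{C([s,T],\mathcal{L}(H))}.
\end{equation*}
A backward Gronwall inequality then gives $\|\Delta Y\|_{C([s,T],\mathcal{L}(H))}\le \delta_k\exp(\|A_i\|_{C([s,T],\mathcal{L}(H))}(T-s))$, and Assumption \ref{21} forces $\delta_k\to 0$, completing the proof.

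The only delicate point, rather than a real obstacle, is that every estimate has to be performed pointwise in $\eta$ and only then promoted to an operator-norm bound, since Definition \ref{22} only guarantees a strong-solution identity; because each term in the displayed equation is linear in $\eta$, dividing by $\|\eta\|_H$ and taking supremum commutes with the time integral, so the passage to operator norms is clean. If one prefers to avoid invoking a backward Gronwall inequality explicitly, the contraction-on-short-intervals trick used in Theorem \ref{23} (pick $\triangle T=\tfrac{1}{3}\|A_i\|_{C([s,T],\mathcal{L}(H))}^{-1}$, iterate finitely many times to cover $[s,T]$) gives the same bound with $\delta_k\to 0$ preserved at each step.
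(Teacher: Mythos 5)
Your proposal is correct and follows essentially the same route as the paper's proof: subtract the strong-solution integral identities, split the cross term as $(Y_i-Y_{ik})A_i+Y_{ik}(A_i-A_{ik})$, and close the estimate with a backward Gronwall inequality (the paper works with squared norms, but that is immaterial). If anything, you are more careful than the paper on one point: you explicitly justify the uniform bound $\sup_k\|Y_{ik}\|_{C([s,T],\mathcal{L}(H))}<\infty$, which the paper only asserts via the remark after Assumption \ref{21} but which is genuinely needed for the constant $C_k$ to tend to zero.
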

\begin{proof}
 By Definition \ref{22}, for any $\eta\in H$ and all $t\in[s,T]$, we have
\begin{align*}
\|(Y_{i}(t)-Y_{ik}(t))\eta\|_{H}^2 
 \leq &4 \|G_i-G_{ik}\|_{\mathcal{L}(H)}^2\|\eta\|_H^2+4(T-s)\|E_i-E_{ik}\|^2_{C([s,T],\mathcal{L}(H))}\|\eta\|_H^2\\
 &+4(T-s)\|A_i-A_{ik}\|^2_{ C([s,T],\mathcal{L}(H))}\|Y_{ik}\|^2_{ C([s,T],\mathcal{L}(H))}\|\eta\|_H^2\\
   &+4\|A_i\|^2_{ C([s,T],\mathcal{L}(H))}\int_t^T\|Y_{i}(\tau)-Y_{ik}(\tau)\|^2_{\mathcal{L}(H)}d\tau\|\eta\|_H^2,
\end{align*}
and so for any $ t\in[s,T]$,
\begin{equation}\label{20}
\|Y_{i}(t)-Y_{ik}(t)\|_{\mathcal{L}(H)}^2\leq C_k+4\|A_i\|^2_{ C([s,T],\mathcal{L}(H))}\int_t^T\|Y_{i}(\tau)-Y_{ik}(\tau)\|^2_{\mathcal{L}(H)}d\tau,
\end{equation}
where $\lim_{k\rightarrow+\infty}C_k=0$. Then by Gronwall's inquality, for any $ t\in[s,T]$, one has
$$
\int_t^T\|Y_{k}(\tau)-Y_{ik}(\tau)\|^2_{\mathcal{L}(H)}d\tau\leq C_ke^{-4 t\|A_i\|^2_{ C([s,T],\mathcal{L}(H))}}\int_t^Te^{-4\tau\|A_i\|^2_{ C([s,T],\mathcal{L}(H))}}d\tau.
$$
Taking  limit on both sides gives
$\lim_{k\rightarrow+\infty}\int_t^T\|Y_{i}(\tau)-Y_{ik}(\tau)\|^2_{\mathcal{L}(H)}d\tau\leq0 \; (\forall t\in[s,T])$
and taking  limit again on both sides in \eqref{20} gives
$\lim_{k\rightarrow+\infty}\|Y_{i}(t)-Y_{ik}(t)\|_{\mathcal{L}(H)}^2\leq0 \; (\forall t\in[s,T]),$
which implies the conclusion.
\end{proof}

\begin{lemma}\label{33}
Under Assumption \ref{21}, one has $$\lim_{k\rightarrow+\infty}\|P-P_k\|_{\mathcal{L}(H^n)}=\lim_{k\rightarrow+\infty}\|Q-Q_k\|_{H^n}=0.$$
\end{lemma}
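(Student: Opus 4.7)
The two statements are independent and are proved by the same style of telescoping-and-triangle-inequality argument, using Assumption \ref{21} together with Lemma \ref{14} and the observation that all the sequences $\{A_{ik}\},\{B_{ijk}\},\{E_{ik}\},\{F_{ik}\},\{G_{ik}\},\{Y_{ik}\},\{\xi_{ik}\},\{f_{ik}\},\{\alpha_{ik}\}$ are uniformly bounded in their respective norms (because each converges, hence the sequence of norms is bounded in $\mathbb{R}$). I would first record this uniform boundedness as a short preliminary remark and then run two parallel estimates.

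\textbf{Estimate for $Q$.} Since $\|Q-Q_k\|_{H^n}^2=\sum_{i\in N}\|\overline{Y}_i-\overline{Y}_{ik}\|_H^2$, I would handle each $i$ separately. Writing
\begin{align*}
\overline{Y}_i-\overline{Y}_{ik}
=(\alpha_i-\alpha_{ik})+\bigl(Y_i(s)\xi_i-Y_{ik}(s)\xi_{ik}\bigr)+\int_s^T\bigl(Y_i(t)f_i(t)-Y_{ik}(t)f_{ik}(t)\bigr)dt,
\end{align*}
I split the middle term as $Y_i(s)(\xi_i-\xi_{ik})+(Y_i(s)-Y_{ik}(s))\xi_{ik}$ and the integrand as $Y_i(t)(f_i(t)-f_{ik}(t))+(Y_i(t)-Y_{ik}(t))f_{ik}(t)$. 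Each piece is controlled in $H$-norm by a factor that tends to zero (by Assumption \ref{21} or Lemma \ref{14}) times a uniformly bounded factor, after which $\|Q-Q_k\|_{H^n}\to 0$ follows immediately from the triangle inequality and a Cauchy--Schwarz bound on the time integral.

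\textbf{Estimate for $P$.} For any $u\in H^n$, I use $(P-P_k)u=\int_s^T(P(t)-P_k(t))u\,dt$ and $\|(P-P_k)u\|_{H^n}\le\int_s^T\|(P(t)-P_k(t))u\|_{H^n}dt$, so it suffices to bound $\|P(t)-P_k(t)\|_{\mathcal{L}(H^n)}$ uniformly in $t\in[s,T]$ by a quantity that vanishes as $k\to\infty$. Since $P(t)-P_k(t)$ is an $n\times n$ matrix of operators in $\mathcal{L}(H)$, I would estimate $\|P(t)-P_k(t)\|_{\mathcal{L}(H^n)}$ by $\sum_{i,j\in N}\|[P(t)]_{ij}-[P_k(t)]_{ij}\|_{\mathcal{L}(H)}$, and then for the off-diagonal entries use the telescoping
\begin{align*}
Y_i(t)B_{ij}(t)-Y_{ik}(t)B_{ijk}(t)=Y_i(t)\bigl(B_{ij}(t)-B_{ijk}(t)\bigr)+\bigl(Y_i(t)-Y_{ik}(t)\bigr)B_{ijk}(t),
\end{align*}
with an analogous splitting on the diagonal that additionally accounts for $F_i(t)-F_{ik}(t)$. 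Taking $\sup_{t\in[s,T]}$, Assumption \ref{21} plus Lemma \ref{14} drive both factors on the right to zero (against uniform bounds), which yields $\|P-P_k\|_{\mathcal{L}(H^n)}\to 0$ after multiplying by $T-s$.

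\textbf{Expected difficulty.} The argument is essentially bookkeeping; the only thing to be careful about is making sure that in each product of norms at least one factor tends to zero while the other stays uniformly bounded. This is where the boundedness of the convergent operator sequences (and the uniform-in-$t$ control afforded by the $C([s,T],\mathcal{L}(H))$-norm in Lemma \ref{14}) plays the decisive role, and once that is in place the rest of the proof is a straightforward triangle-inequality chase.
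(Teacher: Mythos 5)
Your proposal is correct and follows essentially the same route as the paper: the paper likewise telescopes $P-P_k$ into terms involving $F_i-F_{ik}$, $(Y-Y_k)B$, and $Y_k(B-B_k)$ (and analogously decomposes $Q-Q_k$ via $\alpha_i-\alpha_{ik}$, $Y_i(s)(\xi_i-\xi_{ik})+(Y_i(s)-Y_{ik}(s))\xi_{ik}$, and the corresponding split of the $f$-integral), then pairs each vanishing factor from Assumption \ref{21} or Lemma \ref{14} with a uniformly bounded one. The only cosmetic difference is that you bound the operator-matrix norm entrywise while the paper keeps the off-diagonal blocks grouped as $Y(t)B(t)v$; this changes nothing of substance.
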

\begin{proof}
Let $Y(t)=\mbox{diag}(Y_1(t),\cdots,Y_n(t))$ and 
$Y_k(t)=\mbox{diag}(Y_{1k}(t),\cdots,Y_{nk}(t))$ and  set $\triangle F_{ik}=F_i-F_{ik}$,  $\triangle B_{k}=B-B_{k}$ and $\triangle Y_{k}=Y-Y_{k}$. Then clearly, we have
$$
\begin{cases}
Pv=\int_s^T[ (F_1(t)v_1,\cdots,F_n(t)v_n)^{\top}+Y(t)B(t)v]dt, \\
P_kv=\int_s^T[ (F_{1k}(t)v_{1k},\cdots,F_{nk}(t)v_{nk})^{\top}+Y_{k}(t)B_{k}(t)v]dt.
\end{cases}
$$
From Lemma \ref{14}, we know that $\lim_{karrow+\infty}\|Y-Y_k\|_{C([s,T],\mathcal{L}(H^n))}=0$. Therefore,   for any given $v\in H^n$, applying Cauchy-Schwarz's inequality  derives
\begin{align*}
&\|(P-P_k)v\|^2_{H^n}\\
=& \|\int_s^T[ (\triangle F_{1k}v_1,\cdots,\triangle F_{nk}v_n)^{\top}+\triangle Y_{k}(t)B(t)v+Y_{k}(t)\triangle B_{k}(t)v]dt\|^2_{H^n}\\
\leq& 3(T-s)\int_s^T\|(\triangle F_{1k}v_1,\cdots,\triangle F_{nk}v_n)^{\top}\|^2_{H^n}dt\\
&+3(T-s)\int_s^T\|\triangle Y_{k}(t)B(t)v\|^2_{H^n}dt+3(T-s)\int_s^T\|Y_{k}(t)\triangle B_{k}(t)v\|^2_{H^n}dt\\
\leq& 3(T-s)^2[\|v\|^2_{H^n}\max_{i\in N}\|\triangle F_{ik}\|^2_{ C([s,T],\mathcal{L}(H))}+\|\triangle Y_{k}\|^2_{ C([s,T],\mathcal{L}(H^n))}\|B\|^2_{C([s,T],\mathcal{L}(H))}\\
&
+\|Y_k\|^2_{C([s,T],\mathcal{L}(H))}\|\triangle B_{k}\|^2_{C([s,T],\mathcal{L}(H))}]\\
= &C_k\|v\|_{H^n}^2,
\end{align*}
where $\lim_{k\rightarrow+\infty}C_k=0$. Hence we obtain $\|P-P_k\|_{\mathcal{L}(H^n)}\leq \sqrt{C_k}$, which leads to $\lim_{k\rightarrow+\infty}\|P-P_k\|_{\mathcal{L}(H^n)}=0$.
Similarly, setting $\triangle \xi_{ik}=\xi_i-\xi_{ik}$,  $\triangle f_{ik}=f_i-f_{ik}$ and $\triangle Y_{ik}=Y_i-Y_{ik}$, one has
\begin{align*}
&\lim_{k\rightarrow+\infty}\|Q-Q_k\|^2_{H^n}\\
=&\lim_{k\rightarrow+\infty}\|(\alpha-\overline{\alpha}_{k})+(Y_1(s)\triangle\xi_{1k}+\triangle Y_{1k}(s)\xi_{1k},\cdots,Y_n(s)\triangle\xi_{nk}+\triangle Y_{nk}(s)\xi_{nk})^{\top}\\
&+\int_s^T(Y_1(t)\triangle f_{1k}+\triangle Y_{1k}(t)f_{1k},\cdots,Y_n(t)\triangle f_{nk}+\triangle Y_{nk}(t)f_{nk})^{\top}dt\|^2_{H^n}\\
\leq& 5n\lim_{k\rightarrow+\infty}[\max_{i\in N}\|\alpha_i-\overline{\alpha}_{ik}\|^2_H+\max_{i\in N}\|Y_i(s)\|^2_{\mathcal{L}(H)}\|\triangle\xi_{ik}\|^2_H\\
&+\max_{i\in N}\|\triangle Y_{ik}(s)\|^2_{\mathcal{L}(H)}\|\xi_{ik}\|^2_H+(T-s)^2\max_{i\in N}\|Y_i\|^2_{C([s,T],\mathcal{L}(H))}\max_{i\in N}\|\triangle f_{ik}\|^2_H\\
&+(T-s)^2\max_{i\in N}\|\triangle Y_{ik}\|^2_{C([s,T],\mathcal{L}(H))}\max_{i\in N}\|f_{ik}\|^2_H]
=0,
\end{align*}
which ends the proof.
\end{proof}
The corollary below gives an answer to question (A).
\begin{cor}\label{37}
Under Assumptions \ref{16} and \ref{21},  there exists a positive integer  $\mathcal{N}(\epsilon_P)$ such that $P_k$ satisfies  Assumption \ref{16} for any $k>\mathcal{N}(\epsilon_P)$.
\end{cor}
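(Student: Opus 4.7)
The plan is to exploit the operator-norm convergence $P_k\to P$ established in Lemma \ref{33} together with the positive definiteness of $P$ from Assumption \ref{16}, and show that positive definiteness is preserved under sufficiently small perturbations in the operator norm.

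First, I would fix an arbitrary $Z\in H^n$ and split
\begin{equation*}
\langle P_k Z, Z\rangle_{H^n} = \langle P Z, Z\rangle_{H^n} + \langle (P_k-P) Z, Z\rangle_{H^n}.
\end{equation*}
Applying Assumption \ref{16} to the first term and the Cauchy--Schwarz inequality combined with the definition of the operator norm on $\mathcal{L}(H^n)$ to the second term, I obtain
\begin{equation*}
\langle P_k Z, Z\rangle_{H^n} \geq \epsilon_P \|Z\|^2_{H^n} - \|P-P_k\|_{\mathcal{L}(H^n)}\|Z\|^2_{H^n} = \bigl(\epsilon_P - \|P-P_k\|_{\mathcal{L}(H^n)}\bigr)\|Z\|^2_{H^n}.
\end{equation*}

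Next, by Lemma \ref{33}, Assumption \ref{21} yields $\lim_{k\to+\infty}\|P-P_k\|_{\mathcal{L}(H^n)}=0$. Thus there exists a positive integer $\mathcal{N}(\epsilon_P)$ (depending on $\epsilon_P$) such that $\|P-P_k\|_{\mathcal{L}(H^n)} \leq \epsilon_P/2$ for all $k>\mathcal{N}(\epsilon_P)$. Substituting into the previous estimate, for every $k>\mathcal{N}(\epsilon_P)$ and every $Z\in H^n$,
\begin{equation*}
\langle P_k Z, Z\rangle_{H^n} \geq \tfrac{\epsilon_P}{2}\|Z\|^2_{H^n},
\end{equation*}
so $P_k$ is positive definite with constant $\epsilon_{P_k}:=\epsilon_P/2>0$, i.e., Assumption \ref{16} is satisfied for $P_k$.

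There is really no serious obstacle here; the corollary is essentially a standard perturbation argument for coercivity. The only care needed is to verify that the perturbation bound from Lemma \ref{33} is in the same operator norm $\mathcal{L}(H^n)$ used in Assumption \ref{16}, which it is. Consequently, Theorem \ref{29} applies to the perturbed problem \eqref{35} with the range of admissible step sizes $\epsilon_0\in(0,2\epsilon_{P_k}\|P_k\|_{\mathcal{L}(H^n)}^{-2})$, which will be useful when addressing question (B) on stability of the NNE and OWAP in the sequel.
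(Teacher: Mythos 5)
Your proof is correct and follows essentially the same route as the paper: both arguments combine the coercivity of $P$ from Assumption \ref{16} with the convergence $\|P-P_k\|_{\mathcal{L}(H^n)}\to 0$ from Lemma \ref{33} and absorb the perturbation term for large $k$. The only cosmetic difference is that you bound $\langle (P-P_k)Z,Z\rangle_{H^n}$ directly by Cauchy--Schwarz (yielding the constant $\epsilon_P/2$), while the paper uses a Young-type inequality on $\|(P-P_k)Z\|_{H^n}^2$ (yielding $\epsilon_P/4$); both are equally valid.
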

\begin{proof}
According to Lemma \ref{33}, there exists a positive integer  $\mathcal{N}(\epsilon_P)$ such that
$$
\|(P-P_k)Z\|^2_{H^n}\leq \frac{\epsilon_P^2}{4}\|Z\|^2_{H^n},\quad\forall Z\in H^n,\;\forall k>N.
$$  Therefore for any $k>N$, we have
$$
\langle (P-P_k)Z,Z\rangle_{H^n} \leq \frac{1}{\epsilon_P}\|(P-P_k)Z\|^2_{H^n}+\frac{\epsilon_P}{2}\|Z\|^2_{H^n}\leq \frac{3\epsilon_P}{4}\|Z\|^2_{H^n},\quad\forall Z\in H^n.
$$
Combining Assumption \ref{16} gives
$\langle P_kZ,Z\rangle_{H^n}\geq\frac{\epsilon_P}{4}\|Z\|^2_{H^n},\;\forall Z\in H^n$ and completes the proof.
\end{proof}

Corollary \ref{37} ensures the existence and uniqueness of solutions to Problem \ref{32} when $k>N$. Thus without loss of generality, we assume  that for each $ k\geq1$,
$\langle P_kZ,Z\rangle_{H^n}\geq \frac{\epsilon_P}{4}\|Z\|^2_{H^n}$ holds for arbitrary $Z\in H^n$,
where $\epsilon_P>0$ is given in Assumption \ref{16}, and so Problem \ref{32} admits a unique solution for any given $k\geq1$. Otherwise we should consider the new sequences beginning at $k=N+1$ instead in the sequel.

The main stability results can be stated as follows, which give a positive answer to question (B).
\begin{thm}\label{47}
Under Assumptions \ref{16},  both the NNE  and the OWAP are stable.
\end{thm}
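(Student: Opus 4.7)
The plan is to combine the variational inequalities satisfied by $\widehat{u}$ and $\widehat{v}_k$ through carefully constructed ``bridge'' test elements that move between the feasible sets $U^n$ and $U^n_{\varepsilon}$, and then to exploit the coercivity of $P$ supplied by Assumption \ref{16} together with the parameter convergence in Lemma \ref{33}. The essential difficulty is that $\widehat{u}\in U^n$ and $\widehat{v}_k\in U^n_{\varepsilon}$ live in different admissible sets, so neither vector is directly admissible as a test in the other's variational inequality.

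First, I would establish a uniform \emph{a priori} bound $\sup_k\|\widehat{v}_k\|_{H^n}<+\infty$. Testing \eqref{35} with the constant vector $Z_{0k}\equiv \bigl(\tfrac{1+\varepsilon_{1k}}{b-a},\ldots,\tfrac{1+\varepsilon_{1k}}{b-a}\bigr)^{\top}$, which lies in $U^n_{\varepsilon}$ for $k$ large by Assumption \ref{21}, and invoking the eventual coercivity $\langle P_k Z,Z\rangle_{H^n}\geq\tfrac{\epsilon_P}{4}\|Z\|^2_{H^n}$ from Corollary \ref{37} together with the uniform boundedness of $\|P_k\|_{\mathcal{L}(H^n)}$ and $\|Q_k\|_{H^n}$ (Lemma \ref{33}), Young's inequality gives the bound. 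This step is indispensable because the density set $U^n$ is itself unbounded in $H^n$, so boundedness of $\widehat{v}_k$ cannot be read off from the feasible region.

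Second, to bridge $U^n$ and $U^n_{\varepsilon}$, I introduce the affine rescalings
\begin{equation*}
\Phi_k(u)_i = \varepsilon_{2k}^{(i)}+\bigl(1+\varepsilon_{1k}-(b-a)\varepsilon_{2k}^{(i)}\bigr)u_i,\qquad
\Psi_k(v)_i = \frac{v_i-\varepsilon_{2k}^{(i)}}{1+\varepsilon_{1k}-(b-a)\varepsilon_{2k}^{(i)}}.
\end{equation*}
A direct verification of both the integral constraint and the pointwise constraint shows that, for all $k$ sufficiently large, $\Phi_k$ maps $U^n$ into $U^n_{\varepsilon}$ and $\Psi_k$ maps $U^n_{\varepsilon}$ into $U^n$; moreover, Assumption \ref{21} combined with Step~1 yields $\|\Phi_k(\widehat{u})-\widehat{u}\|_{H^n}+\|\Psi_k(\widehat{v}_k)-\widehat{v}_k\|_{H^n}=o(1)$. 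Designing this quantitative bridge is the part I expect to be the main obstacle, since a naive metric projection between the two sets would destroy the control on the $H^n$-distance given the unboundedness of $U^n$.

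Third, inserting $Z=\Psi_k(\widehat{v}_k)$ into \eqref{10} and $Z=\Phi_k(\widehat{u})$ into \eqref{35}, adding the two inequalities, and splitting $P\widehat{u}-P_k\widehat{v}_k=P(\widehat{u}-\widehat{v}_k)+(P-P_k)\widehat{v}_k$ produces
\begin{align*}
\langle P(\widehat{u}-\widehat{v}_k),\widehat{u}-\widehat{v}_k\rangle_{H^n}
&\leq -\langle(P-P_k)\widehat{v}_k+(Q-Q_k),\widehat{u}-\widehat{v}_k\rangle_{H^n}\\
&\quad +\langle P\widehat{u}+Q,\Psi_k(\widehat{v}_k)-\widehat{v}_k\rangle_{H^n}+\langle P_k\widehat{v}_k+Q_k,\Phi_k(\widehat{u})-\widehat{u}\rangle_{H^n}.
\end{align*}
The left-hand side dominates $\epsilon_P\|\widehat{u}-\widehat{v}_k\|^2_{H^n}$ by Assumption \ref{16}, while the right-hand side is $o(1)+o(\|\widehat{u}-\widehat{v}_k\|_{H^n})$ by Steps~1--2 and Lemma \ref{33}, so absorbing forces $\|\widehat{u}-\widehat{v}_k\|_{H^n}\to 0$. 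For the OWAP, the decomposition
\begin{equation*}
\langle\widehat{u},\widehat{V}\rangle_{H^n}-\langle\widehat{v}_k,\widehat{W}_k\rangle_{H^n}
=\langle\widehat{u}-\widehat{v}_k,P\widehat{u}+Q\rangle_{H^n}+\langle\widehat{v}_k,P(\widehat{u}-\widehat{v}_k)\rangle_{H^n}+\langle\widehat{v}_k,(P-P_k)\widehat{v}_k+(Q-Q_k)\rangle_{H^n}
\end{equation*}
together with Steps~1--3 and Lemma \ref{33} sends each term to zero, yielding \eqref{73} and completing the argument.
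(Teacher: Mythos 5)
Your proof is correct, but it takes a genuinely different route from the paper's. The paper first pulls the perturbed problem back to the fixed feasible set $U^n$ via the affine change of variable $\overline{v}_k=h(\widehat{v}_k)$, rewrites both variational inequalities as fixed-point equations for the metric projection $\mathcal{P}_{U^n}$ (with a common step size $\epsilon_0$), and then extracts both the a priori bound on $\overline{v}_k$ and the convergence $\|\widehat{u}-\overline{v}_k\|_{H^n}\to0$ from the non-expansiveness of the projection combined with the contraction factor $1-2\epsilon_0\epsilon_P+\epsilon_0^2\|P\|^2_{\mathcal{L}(H^n)}<1$; it finally translates back to $\widehat{v}_k$. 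You instead work directly with the two variational inequalities: you obtain the uniform bound on $\|\widehat{v}_k\|_{H^n}$ by testing \eqref{35} with an admissible constant density and invoking the eventual coercivity of $P_k$ (Corollary \ref{37}), and you then add the two inequalities with the bridge test elements $\Psi_k(\widehat{v}_k)\in U^n$, $\Phi_k(\widehat{u})\in U^n_{\varepsilon}$ and absorb via the strong monotonicity of $P$. This is the classical monotone-perturbation argument; it avoids the projection reformulation and the choice of $\epsilon_0$ entirely, at the cost of having to design the maps $\Phi_k,\Psi_k$ carefully. Your normalization $1+\varepsilon_{1k}-(b-a)\varepsilon^{(i)}_{2k}$ is in fact more precise than the paper's map $h(Z)=(Z-\varepsilon_{2k})/(1+\varepsilon_{1k})$, which preserves the integral constraint of $U^n$ exactly only when $\varepsilon^{(i)}_{2k}=0$; so your bridging step repairs a small imprecision in the published argument. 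The treatment of the OWAP is essentially the same in both proofs (a three-term splitting controlled by Lemma \ref{33}, the bound on $\widehat{v}_k$, and the already-established convergence of the equilibria).
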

\begin{proof}
Let
\begin{equation}\label{50}
\varepsilon_{2k}=(\varepsilon^{(1)}_{2k},\cdots,\varepsilon^{(n)}_{2k})^{\top}, \quad Z_k=h(Z)=\frac{Z-\varepsilon_{2k}}{1+\varepsilon_{1k}},\quad \overline{v}_k=h(\widehat{v}_k)=\frac{\widehat{v}_k-\varepsilon_{2k}}{1+\varepsilon_{1k}},
\end{equation}
where the mapping $h:U^n_{\varepsilon}\rightarrow U^n$ is  bijective, then one can check that $Z_k,\overline{v}_k\in U^n$ for any $k$, and
$\lim_{k\rightarrow+\infty}\|\varepsilon_{2k}\|_{H^n}^2=(b-a)\lim_{k\rightarrow+\infty}\|\varepsilon_{2k}\|_{\mathbb{R}^n}^2=0.$

It is clear that \eqref{35} can be rewritten as
\begin{equation}\label{36}
\langle P_k\overline{v}_k+P_k\frac{\varepsilon_{2k}}{1+\varepsilon_{1k}}+\frac{Q_k}{1+\varepsilon_{1k}},Z_k-\overline{v}_k\rangle_{H^n}\geq 0,\quad \forall Z_k\in U^n.
\end{equation}
According to Theorem \ref{29} and Corollary \ref{37}, when $k>N$,  solving ID-VI \eqref{36} is equivalent to solving the following projection equation with the constant 
\begin{equation}\label{38}
\overline{v}_k=\mathcal{P}_{U^n}[(\mathbb{I}_{H^n}-\epsilon_0P_k)\overline{v}_k-\epsilon_0(P_k\frac{\varepsilon_{2k}}{1+\varepsilon_{1k}}+\frac{Q_k}{1+\varepsilon_{1k}})].
\end{equation}
Here  $\epsilon_0\in(0,[\epsilon_P\|P\|_{\mathcal{L}(H^n)}^{-2}]\wedge [\epsilon_P\max_{k}\|P_k\|_{\mathcal{L}(H^n)}^{-2}])\subseteq(0,2\epsilon_P\|P\|_{\mathcal{L}(H^n)}^{-2})$ can be chosen the same in \eqref{17} and \eqref{38}. We claim that the sequence $\{\|\overline{v}_k\|_{H^n}\}_{k=1}^{+\infty}$ is bounded. Indeed, we have from Lemma \ref{33} that
$\{\|P_k\|_{\mathcal{L}(H^n)}\}_{k=1}^{+\infty}$ and $\{\|Q_k\|_{H^n}\}_{k=1}^{+\infty}$ are bounded. By choosing $\delta'>0$ with $\delta_1=(1+\delta')(1-\epsilon_0\epsilon_P)<1$ and observing the inequality 
\begin{equation}\label{41}
\|c+d\|_{H^n}^2=\|c\|_{H^n}^2+\|d\|_{H^n}^2+2\langle c,d\rangle_{H^n}\leq(1+\rho)\|c\|_{H^n}^2+(1+\frac{1}{\rho})\|d\|_{H^n}^2
\end{equation}
for arbitrary $\rho>0$, we have
\begin{align*}
\|\overline{v}_k\|_{H^n}^2&=\|\mathcal{P}_{U^n}[(\mathbb{I}_{H^n}-\epsilon_0P_k)\overline{v}_k-\epsilon_0(P_k\frac{\varepsilon_{2k}}{1+\varepsilon_{1k}}+\frac{Q_k}{1+\varepsilon_{1k}})]\|_{H^n}^2\\
&\le \delta_1\|\overline{v}_k\|_{H^n}^2+\frac{2\epsilon_0^2(1+\delta')}{\delta'(1+\varepsilon_{1k})^2}(\|P_k\|_{\mathcal{L}(H^n)}^2
\|\varepsilon_{2k}\|_{H^n}^2+\|Q_k\|_{H^n}^2),
\end{align*}
where we have used the non-expansive property of metric projection operators and the fact that
$
\|(\mathbb{I}_{H^n}-\epsilon_0P_k)\overline{v}_k\|_{H^n}^2
\leq (1-\epsilon_0\epsilon_P))\|\overline{v}_k\|_{H^n}^2$ when $\epsilon_0\leq \epsilon_P\max_{k}\|P_k\|_{\mathcal{L}(H^n)}^{-2}$.
Hence
$$
\|\overline{v}_k\|_{H^n}^2\leq \max_{k}\frac{2\epsilon_0^2(1+\delta')}{\delta'(1-\delta_1)(1+\varepsilon_{1k})^2}[\max_{k}\|P_k\|_{\mathcal{L}(H^n)}^2\max_{k}\|\varepsilon_{2k}\|_{H^n}^2+\max_{k}\|Q_k\|_{H^n}^2],
$$
and the boundedness of $\{\|\overline{v}_k\|_{H^n}\}_{k=1}^{+\infty}$ follows.

Next, by choosing $\delta>0$ with $\delta_2=(1+\delta)(1-\epsilon_0\epsilon_P)<1$, from \eqref{41} one has
\begin{align*}
\|\widehat{u}-\overline{v}_k\|_{H^n}^2
\leq & \delta_2\|\widehat{u}-\overline{v}_k\|_{H^n}^2+4\epsilon^2_0(1+\frac{1}{\delta})\|P-P_k\|_{\mathcal{L}(H^n)}^2\|\overline{v}_k\|_{H^n}^2\\
&+\frac{4\epsilon^2_0(1+\frac{1}{\delta})}{(1+\varepsilon_{1k})^2}[\|P_k\|_{\mathcal{L}(H^n)}^2\|\varepsilon_{2k}\|_{H^n}^2+\|Q_k-Q\|_{H^n}^2+\varepsilon^2_{1k}\|Q\|_{H^n}^2],
\end{align*}
where we have also used the non-expansive property of metric projection operators and the fact that 
$\|(\mathbb{I}_{H^n}-\epsilon_0P_k)\overline{v}_k\|_{H^n}^2
=\|\overline{v}_k\|_{H^n}^2-2\epsilon_0\langle P_k\overline{v}_k,\overline{v}_k\rangle_{H^n}+
\epsilon_0^2\|P_k\overline{v}_k\|_{H^n}^2
\leq (1-\epsilon_0\epsilon_P)\|\overline{v}_k\|_{H^n}^2.
$
 Applying Lemma \ref{33} and the boundedness of $\{\|\overline{v}_k\|_{H^n}\}_{k=1}^{+\infty}$ gives
$\|\widehat{u}-\overline{v}_k\|_{H^n}^2\leq C_k$,
 where $\lim_{k\rightarrow+\infty}C_k=0$.

On the other hand, we have from \eqref{50} that
$
\|\widehat{u}-\overline{v}_k\|_{H^n}
\geq\|\widehat{u}-\widehat{v}_k\|_{H^n}-\frac{\varepsilon_{1k}}{1+\varepsilon_{1k}}\|\widehat{v}_k\|_{H^n}
-\|\frac{\varepsilon_{2k}}{1+\varepsilon_{1k}}\|_{H^n}.
$
Thus, $\|\widehat{u}-\widehat{v}_k\|_{H^n}\leq\frac{\varepsilon_{1k}}{1+\varepsilon_{1k}}\|\widehat{v}_k\|_{H^n}
+\frac{1}{1+\varepsilon_{1k}}\left\|\varepsilon_{2k}\right\|_{H^n}+\sqrt{C_k}.$
Letting $k\rightarrow+\infty$ on both sides gives $\lim_{k\rightarrow+\infty}\|\widehat{u}-\widehat{v}_k\|_{H^n}=0$.

For the OWAP, using Lemma \ref{33} and the boundedness of $\{\|\overline{v}_k\|_{H^n}\}_{k=1}^{+\infty}$, we have
\begin{align}\label{42}
&\lim_{k\rightarrow+\infty}\|\widehat{V}-\widehat{W}_k\|_{H^n}
= \lim_{k\rightarrow+\infty}\|P\widehat{u}+Q-P_k\widehat{v}_k-Q_k\|_{H^n}\\   
   \leq&\lim_{k\rightarrow+\infty}(\|P\|_{\mathcal{L}(H^n)}\|\widehat{u}-\widehat{v}_k\|_{H^n}+\|P-P_k\|_{\mathcal{L}(H^n)}\|\widehat{v}_k\|_{H^n}
   +\|Q-Q_k\|_{H^n})=0.\nonumber
\end{align}
It is clearly that
$$
|\langle \widehat{u},\widehat{V}\rangle_H-\langle \widehat{v}_k,\widehat{W}_k\rangle_H|
\leq\|\widehat{u}-\widehat{v}_k\|_{H^n}\|\widehat{V}\|_{H^n}+\|\widehat{v}_k\|_{H^n}\|\widehat{V}-\widehat{W}_k\|_{H^n}.
$$
Taking limit on both sides and by Theorem \ref{47}, $\lim_{k\rightarrow+\infty}|\langle \widehat{u},\widehat{V}\rangle_H-\langle \widehat{v}_k,\widehat{W}_k\rangle_H|\leq 0,$
where we have used the  boundedness of $\{\|\overline{v}_k\|_{H^n}\}_{k=1}^{+\infty}$. Thus the result follows.
\end{proof}
\begin{remark}\label{90}
Similarly, we can define the stability of the value function, i.e., $\lim_{k\rightarrow+\infty}\|\widehat{V}-\widehat{W}_k\|_H=0$ holds under Assumption \ref{16}. From \eqref{42}, the stability of the value function is provided.
\end{remark}
To illustrate  the results obtained in this section, we provide the following example.
\begin{example}\label{67}
Recalling the operators $F$ and $W_1$ defined in Example \ref{75}, one can check that the operators $W_1\in\mathcal{L}(H)$, $W_1F\in \mathcal{L}(H)$ are self-adjoint. We in addition suppose that $s=0$, $a>0$, $\varsigma_5>0$, $\varsigma_0,\varsigma_6\in H$, $\varsigma_7\in C([0,T],H)$ and that
$\varsigma_0(x)>\varsigma_2b$, $\varsigma_6(x)>-\varsigma_4b$  and $\xi(x)>0$ for a.e. $x\in[a,b]$.
From  the facts that $Fu=\int_a^bxu(x)dx\in[a,b]$ and that $u(x)\geq0$ a.e. $x\in[a,b]$, we can deduce that the demand and the price are positive, i.e.,
$X(t,x)>0$ and $p(t,x)>0$ for any  $t\in[0,T]$ and a.e.  $x\in[a,b]$. Then the payoffs can be rewritten as
$V=\int_0^T[\varsigma_3W_1X(t)+\varsigma_4W_1Fu+\varsigma_5W_1u+W_1\varsigma_6(t)-W_1\varsigma_7(t)]dt.$
The firms aim to find $\widehat{u} \in U$  such  that for a.e. $x\in[a,b]$, $\widehat{u}(x)>0\Rightarrow\widehat{V}(x)\geq\langle\widehat{V},\widehat{u}\rangle_{H}$.

We would like to discuss this problem in  two cases: (I) $\varsigma_1\neq0$ and $\frac{b^3-a^3}{3}\theta_1+\varsigma_5aT>0$ with $\theta_1=(\frac{\varsigma_2\varsigma_3}{\varsigma_1}+\varsigma_4)T-\frac{\varsigma_2\varsigma_3}{\varsigma^2_1}e^{\varsigma_1T}$;
(II) $\varsigma_1=0$ and $\frac{b^3-a^3}{3}\theta_2+\varsigma_5aT>0$ with where $\theta_2=\varsigma_4T-\frac{\varsigma_2\varsigma_3}{2}T^2$.

In  case (I), one can check that
$Y(t)=\frac{\varsigma_3}{\varsigma_1}(e^{\varsigma_1(T-t)}-1)W_1$, $P=\theta_1 W_1F+\varsigma_5TW_1$, and
$Q=\int_0^TW_1(\varsigma_6(t)-\varsigma_7(t))dt+\frac{\varsigma_3}{\varsigma_1}(e^{\varsigma_1T}-1)W_1\xi+(\frac{\varsigma_3}{\varsigma^2_1}e^{\varsigma_1T}-\frac{\varsigma_3T}{\varsigma_1})W_1\varsigma_0.$
If $\theta_1\geq0$, we have $\langle Pg,g\rangle_H=\theta_1\|Fg\|_H^2+\varsigma_5T\int_a^bxg^2(x)dx\geq \varsigma_5aT\|g\|_H^2,\; \forall g\in H.$
If $\theta_1<0$, we have from Cauchy-Schwarz's inequality that
$
\langle Pg,g\rangle_H\geq \theta_1\|g\|_H^2\int_a^bx^2dx+\varsigma_5aT\|g\|_H^2=(\frac{b^3-a^3}{3}\theta_1+\varsigma_5aT)\|g\|_H^2,\; \forall g\in H.
$
Thus, Assumption \ref{16} is satisfied. According to Remark \ref{55}, it is easy to show that the operator $P=\theta_1 W_1F+\varsigma_5TW_1$ is self-adjoint and thus $V=Pu+Q$ is a full potential game. By Theorems \ref{29}, \ref{18} and \ref{49}, the NNE $\widehat{u}$ can be obtained by the iteration \eqref{26}. And by Theorem \ref{47}, both the NNE  and the OWAP  are stable.

In case (II), one can also check that $Y(t)=\varsigma_3(T-t)W_1$, $ P=\theta_2 W_1F+\varsigma_5TW_1$, and
$Q=\int_0^TW_1(\varsigma_6(t)-\varsigma_7(t))dt+\varsigma_3TW_1\xi+\frac{\varsigma_3}{2}T^2W_1\varsigma_0$.
Similarly,  it can be shown that Assumption \ref{16} is satisfied and that  $V=Pu+Q$ is a full potential game. By Theorems \ref{29}, \ref{18} and \ref{49}, the NNE $\widehat{u}$ can be obtained by the iteration \eqref{26}. And by Theorem \ref{47},  all of the NNE, the value function,  and the OWAP inherit the property of stability.

Set the parameters as $[a,b]=[1,2]$, $s=\varsigma_1=\varsigma_4=0$, $\varsigma_2=0.5$, $\varsigma_3=\varsigma_5=2$,  and $\varsigma_6=\varsigma_7=T=1$. Then by the iteration \eqref{26}, we can obtain Figures \ref{Fig.1}-\ref{Fig.4},  which demonstrate the stability of the  NNE,  the value function,    and the OWAP   with respect to variations in $(\varsigma_0,\xi)$  and the convergence of $\lambda$. These numerical results align with the theoretical predictions provided by Theorem \ref{47}\, further validating the robustness of the NNE, the value function, and OWAP under the specified conditions in the framework of ID-NDG, and indicating that an increase in demand may lead to an increase in payoffs.
\begin{figure}
  \centering
  \includegraphics[height=4.5cm, width=12cm]{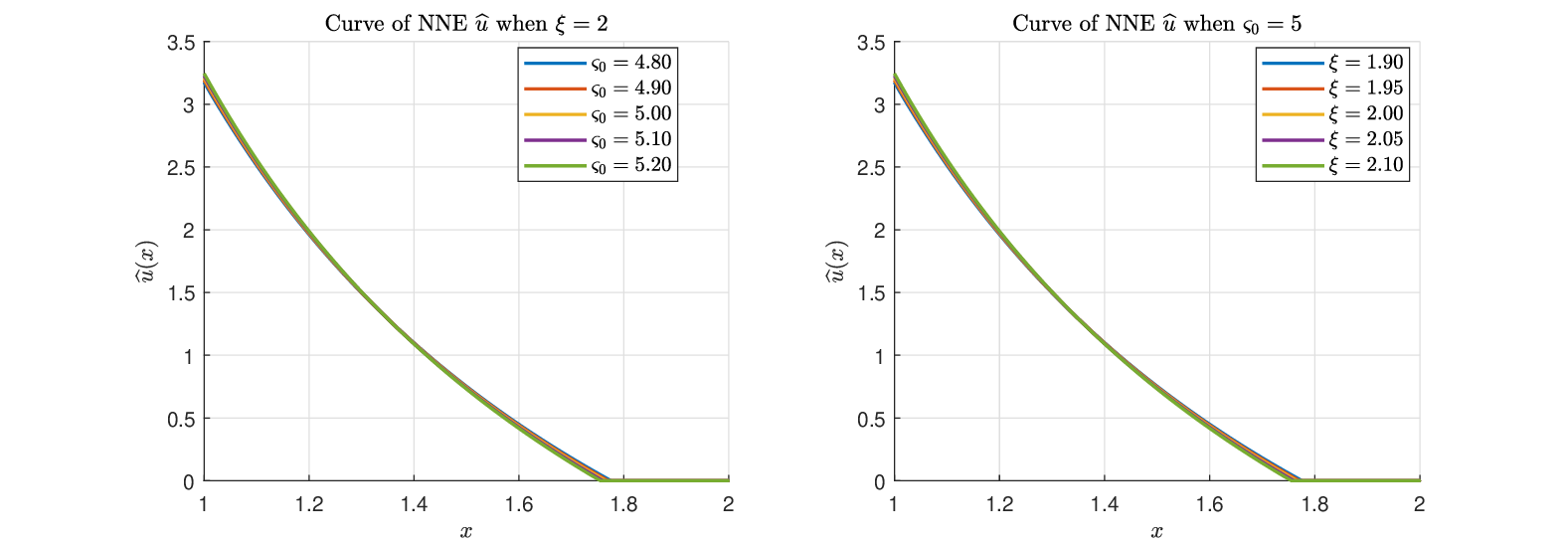}
  \caption{Stability of the NNE}\label{Fig.1}
\end{figure}

\begin{figure}
  \centering
  \includegraphics[height=4.5cm, width=12cm]{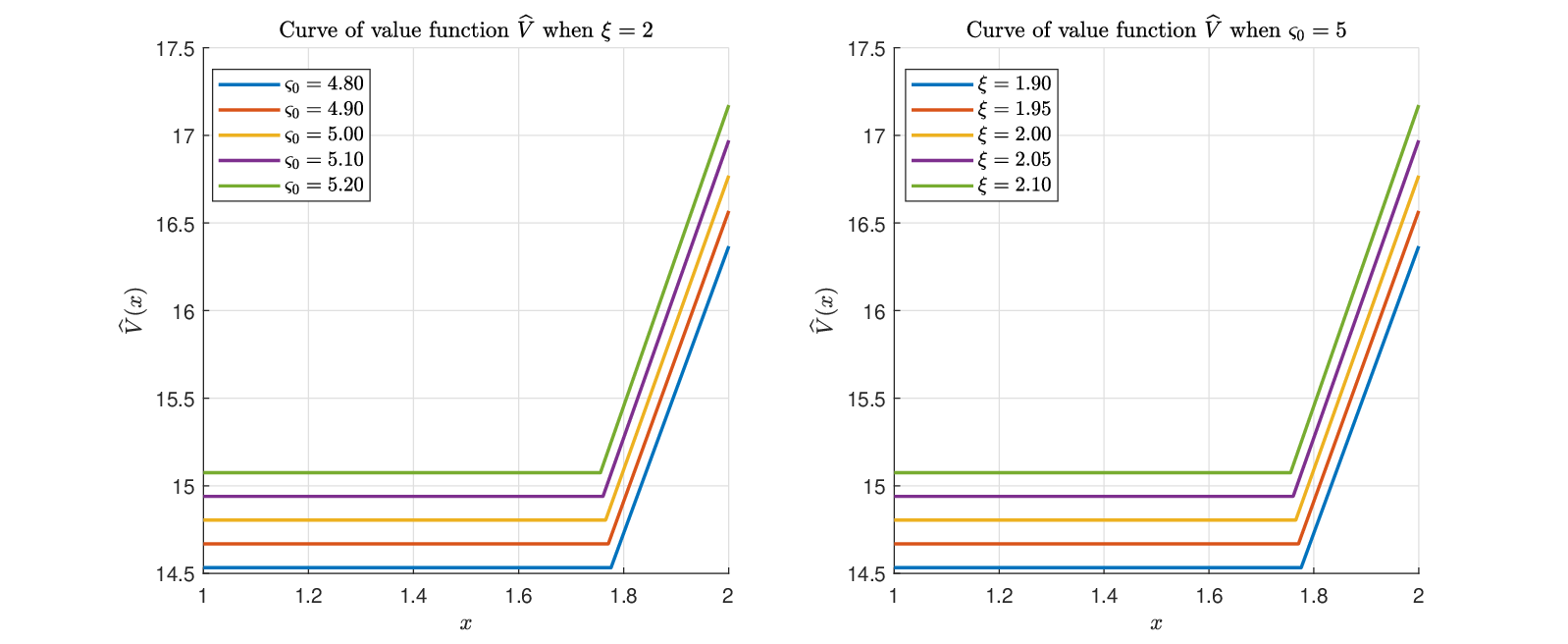}
  \caption{Stability of  the value function }\label{Fig.2}
\end{figure}

\begin{figure}
  \centering
  \includegraphics[height=4.5cm, width=12cm]{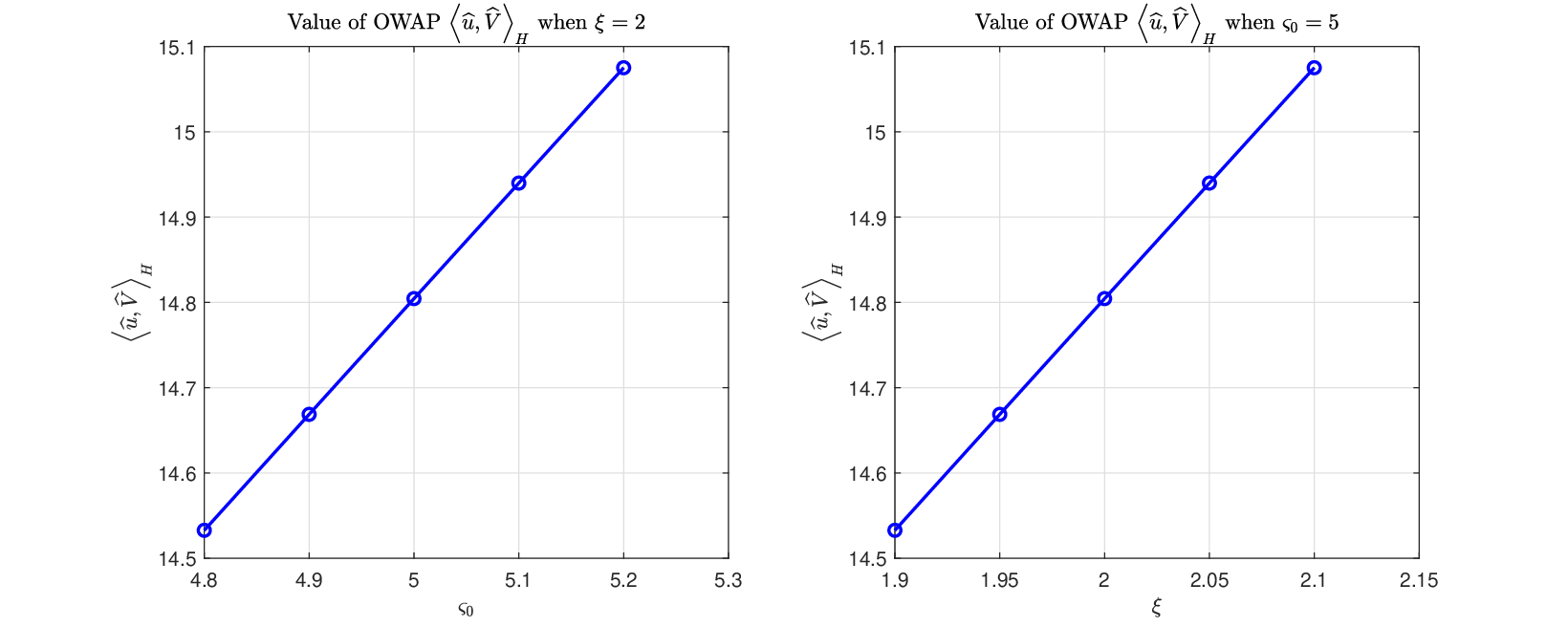}
  \caption{Stability of  the OWAP}\label{Fig.3}
\end{figure}

\begin{figure}
  \centering
  \includegraphics[height=5cm, width=14cm]{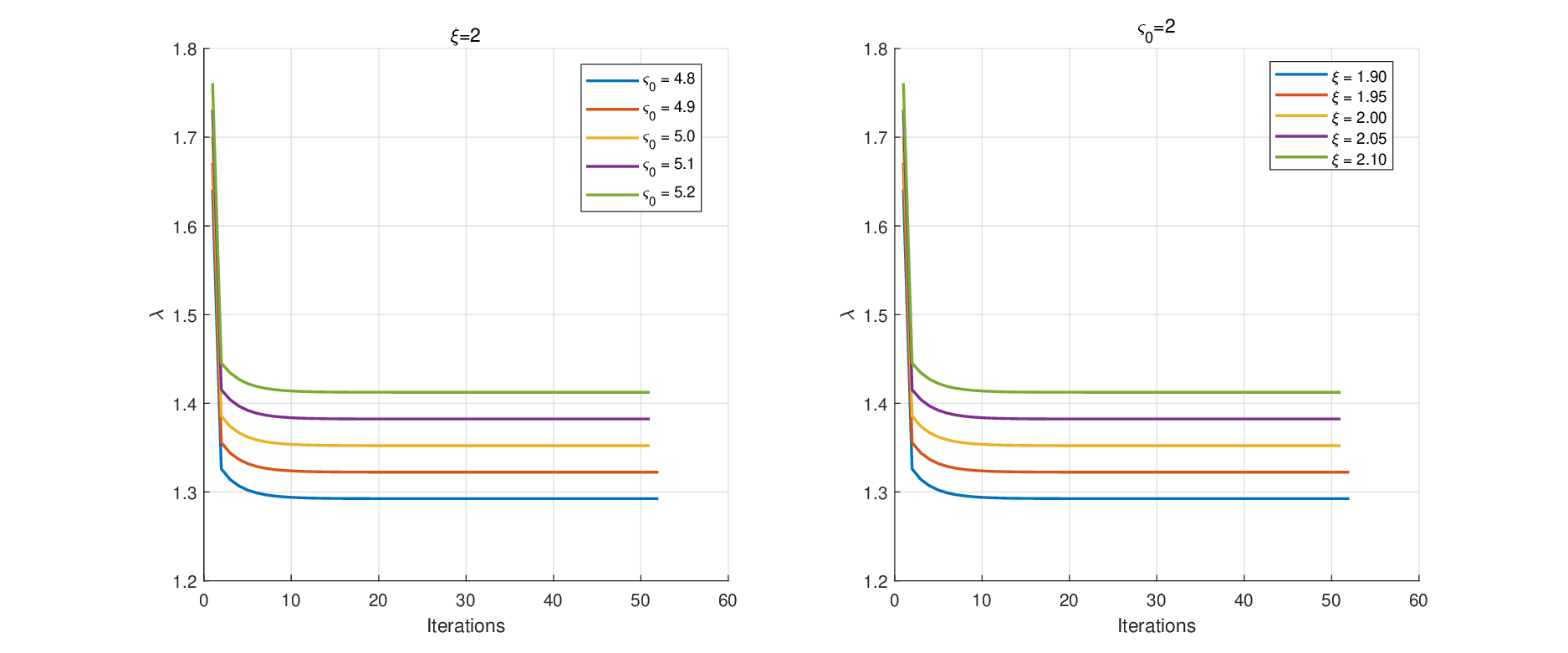}
  \caption{Convergence of $\lambda$}\label{Fig.4}
\end{figure}
\end{example}

\section{Existence, Uniqueness and Stability of  MNEs}
In this section, we discuss  existence, uniqueness and stability of MNEs to Problem \ref{62}. According to \eqref{60} and $V_i=J_i(u_i)$, we have
\begin{align*}
\langle \widehat{u}_i,J_i(\widehat{u}_i)\rangle_H=& \max_{u_i\in U}[\langle u_i,\alpha_i+Y_i(s)\xi_i+\int_s^TY_i(t)f_i(t)dt\rangle_H+ \langle u_i,\int_s^TF_i(t)u_i dt\rangle_H\\
  & +\langle u_i,\int_s^TY_i(t)(\sum_{j\neq i,j\in N}B_{ij}(t)\widehat{u}_j+B_{ii}(t)u_i)dt\rangle_H].
\end{align*}
It is well known that $\widehat{u}_i$ ($\forall i\in N$) is the solution to the following ID-VI:
\begin{equation}\label{68}
\langle \overline{Y}_{i}+\int_s^TY_i(t)\sum_{j\in N}B_{ij}(t)\widehat{u}_jdt
+\int_s^T\mathfrak{P}_{i}(t)\widehat{u}_i dt,Z-\widehat{u}_i\rangle_H\leq0,\quad \forall Z\in U,
\end{equation}
where $\overline{Y}_{i}=\alpha_i+Y_{i}(s)\xi_{i}+\int_s^TY_{i}(t)f_{i}(t)dt$ and $\mathfrak{P}_{i}(t)=Y_i(t)B_{ii}(t)+(B_{ii}(t)Y_i(t))^*+F_i(t)+F^*_i(t)$. Here $Y_i$ is the solution to \eqref{7}. Replace the expression of $P(t)$ in \eqref{15} by
$$
P(t)=\begin{bmatrix}
\mathfrak{P}_1(t) & Y_1(t)B_{12}(t)&\dots & Y_1(t)B_{1n}(t)\\
  Y_2(t)B_{21}(t)& \mathfrak{P}_2(t)&\dots & Y_2(t)B_{2n}(t)\\
\vdots &\vdots & \ddots&\vdots\\
Y_n(t)B_{n1}(t)&Y_n(t)B_{n2}(t)&\dots & \mathfrak{P}_n(t)
\end{bmatrix}_{n \times n}
$$
instead. Then by similar arguments as  in Theorem \ref{59}, one can show that solving ID-VI \eqref{68}   is equivalent to solving the following problem.
\begin{prob}\label{69}
Find the MNE $\widehat{u}=(\widehat{u}_1,\cdots,\widehat{u}_n)\in U^n$ such that
$\langle P\widehat{u}+Q,Z-\widehat{u}\rangle_{H^n}\leq 0$ for all $Z\in U^{n}$.
\end{prob}

On the other hand, consider the following problem.
\begin{prob}\label{70}
Find $\widehat{v}_k\in U^n_{\varepsilon}$ such  that
$
\langle P_k\widehat{v}_k+Q_k,Z-\widehat{v}_k\rangle_{H^n}\leq 0$  for all $Z\in U^n_{\varepsilon}$, where the set $U^n_{\varepsilon}$ is given by \eqref{79}, the coefficients  $P_k$ and $Q_k$ are given by
\begin{equation*}
\begin{cases}
P_k\widehat{u}=\int_s^TP_k(t)\widehat{u}dt,\quad Q_k=(\overline{Y}_{1k},\cdots,\overline{Y}_{nk})^{\top},\\
\mathcal{P}_{ik}(t)=Y_{ik}(t)B_{iik}(t)+B^*_{iik}(t)Y^*_{ik}(t)+F_{ik}(t)+F^*_{ik}(t),\\
P_k(t)=\begin{bmatrix}
\mathcal{P}_{1k}(t) & Y_{1k}(t)B_{12k}(t)&\dots & Y_{1k}(t)B_{1nk}(t)\\
  Y_{2k}(t)B_{21k}(t)& \mathcal{P}_{2k}(t)&\dots & Y_2(t)B_{2nk}(t)\\
\vdots &\vdots & \ddots&\vdots\\
Y_{nk}(t)B_{n1k}(t)&Y_{nk}(t)B_{n2k}(t)&\dots & \mathcal{P}_{nk}(t)
\end{bmatrix}_{n \times n},
\end{cases}
\end{equation*}
and the coefficients $Y_{ik}(t)$, $\overline{Y}_{ik}$, $B_{ik}(t)$ and $F_{ik}(t)$ are introduced in Section 4.
\end{prob}
We call the  MNE (or the OWAP)  stable if \eqref{73} holds true under Assumption \ref{21} (see Remark \ref{90} for the stability of the value function). By observing the inherent symmetry between Problems \ref{12} and \ref{69} (as well as Problems \ref{32} and \ref{70}), and building upon the analytical framework developed in Sections 3 and 4, we rigorously formalize the following result through analogous proof techniques.
\begin{thm}\label{65}
Suppose that $P$ is negative definite, i.e., $\langle PZ,Z\rangle_{H^n}\leq -\epsilon'_P\|Z\|^2_{H^n}$  ($\forall Z\in H^{n}$)
for some constant $\epsilon'_P>0$. Then there exists a unique solution $\widehat{u}=(\widehat{u}_1,\cdots,\widehat{u}_n)\in U^n$ to Problem \ref{62}, which can be solved by the iteration below:
  \begin{equation}\label{53}
   \begin{cases}
u^{(m+1)}(x)=\mathcal{P}_{\mathbb{R}^n_+}([(\mathbb{I}_{H^n}+\epsilon_0P){u^{(m)}}](x)+\epsilon_0Q(x)-\lambda^{(m)})\; \mbox{a.e.} \;x\in[a,b],\\
\mbox{s.t.}\quad \langle u^{(m+1)},e_i\rangle_{H^n}=1,\quad\forall i\in N,
   \end{cases}
 \end{equation}
 where  $\epsilon_0\in(0,2\epsilon'_P\|P\|_{\mathcal{L}(H^n)}^{-2})$, and the limit $\lambda=\lim_{m\rightarrow+\infty}\lambda^{(m)}$  exists.  Moreover, both   MNE  and   OWAP   are stable.
\end{thm}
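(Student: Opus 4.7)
The plan is to reduce Problem \ref{69} to one of the form of Problem \ref{12} by the sign reversal $\widetilde{P}:=-P$ and $\widetilde{Q}:=-Q$ and then import the machinery of Section 3 essentially verbatim. Under this substitution, the inequality $\langle P\widehat{u}+Q,Z-\widehat{u}\rangle_{H^n}\leq 0$ becomes $\langle \widetilde{P}\widehat{u}+\widetilde{Q},Z-\widehat{u}\rangle_{H^n}\geq 0$ for all $Z\in U^n$, and the negative definiteness of $P$ is precisely the positive definiteness of $\widetilde{P}$ required by Assumption \ref{16} (with $\epsilon'_P$ playing the role of $\epsilon_P$); since $\|\widetilde{P}\|_{\mathcal{L}(H^n)}=\|P\|_{\mathcal{L}(H^n)}$, the admissible range of $\epsilon_0$ is unchanged.

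First I would apply Theorem \ref{29} to $(\widetilde{P},\widetilde{Q})$ to obtain a unique fixed point $\widehat{u}\in U^n$ of
\[
\widehat{u}=\mathcal{P}_{U^n}\bigl[(\mathbb{I}_{H^n}-\epsilon_0\widetilde{P})\widehat{u}-\epsilon_0\widetilde{Q}\bigr]=\mathcal{P}_{U^n}\bigl[(\mathbb{I}_{H^n}+\epsilon_0 P)\widehat{u}+\epsilon_0 Q\bigr],
\]
and by the standard fixed-point--variational-inequality equivalence $\widehat{u}$ is the unique MNE of Problem \ref{62}. Next, the Lagrange-multiplier decomposition of Theorem \ref{18} (separation of the integral and pointwise nonnegativity constraints, reduction to a pointwise projection onto $\mathbb{R}^n_+$, and uniqueness of each $\lambda_i^{(m)}$ via Lemma \ref{24}) depends only on the structure of $U^n$, not on the sign of the operator, so it applies verbatim to $(\widetilde{P},\widetilde{Q})$ and produces exactly the iteration \eqref{53}. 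Passing to the limit as in Theorem \ref{49}, continuity of $\mathcal{P}_{\mathbb{R}^n_+}$ together with uniqueness from Lemma \ref{24} yields existence of $\lambda=\lim_{m\to\infty}\lambda^{(m)}$ and its characterization as the unique multiplier for the limiting system.

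For the stability assertion I would set $\widetilde{P}_k:=-P_k$ and $\widetilde{Q}_k:=-Q_k$, so that Problem \ref{70} becomes the analogue of Problem \ref{32} and the sign-reversed pair satisfies the hypotheses of Corollary \ref{37} for all $k$ sufficiently large. The main obstacle is re-establishing Lemma \ref{33} in the current setting: the symmetrized diagonal blocks $\mathfrak{P}_i(t)=Y_i(t)B_{ii}(t)+(B_{ii}(t)Y_i(t))^*+F_i(t)+F_i^*(t)$ introduce the adjoints $(B_{iik}Y_{ik})^*$ and $F_{ik}^*$ into $P_k$, which were absent in the NNE case. This is handled by noting that $G\mapsto G^*$ is an isometry on $\mathcal{L}(H)$, so each operator-norm convergence of Assumption \ref{21} together with Lemma \ref{14} transfers automatically to the adjoint, giving $\|F_i^*-F_{ik}^*\|_{C([s,T],\mathcal{L}(H))}\to 0$ and likewise for $(B_{ii}Y_i)^*-(B_{iik}Y_{ik})^*$; multiplicativity of the operator norm then delivers $\|P-P_k\|_{\mathcal{L}(H^n)}\to 0$ as in the original Lemma \ref{33}. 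With this in place, the proof of Theorem \ref{47} transports line-by-line to give $\lim_{k\to\infty}\|\widehat{u}-\widehat{v}_k\|_{H^n}=0$, and the estimate \eqref{42} (applied to the sign-reversed operators) yields stability of the OWAP and, via Remark \ref{90}, of the value function as well.
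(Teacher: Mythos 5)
Your proposal is correct and follows exactly the route the paper intends: the paper gives no written proof of Theorem \ref{65}, merely invoking the ``inherent symmetry'' between Problems \ref{12} and \ref{69} (and \ref{32} and \ref{70}), which is precisely your sign reversal $\widetilde{P}=-P$, $\widetilde{Q}=-Q$ converting negative definiteness into Assumption \ref{16} and importing Theorems \ref{29}, \ref{18}, \ref{49} and \ref{47} verbatim. You in fact supply the one detail the paper glosses over --- that the adjoint terms $(B_{iik}Y_{ik})^*$ and $F_{ik}^*$ in the symmetrized diagonal blocks still satisfy the convergence needed for Lemma \ref{33}, via isometry of $G\mapsto G^*$ on $\mathcal{L}(H)$ --- which is a worthwhile addition rather than a deviation.
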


To illustrate the results obtained in this section, we provide the following example.
\begin{example}\label{66}
Recalling the operators $F$ and $W_1$ defined in Example \ref{75}, one can check that the operators can check that the operators $W_1\in\mathcal{L}(H)$, $W_1F\in \mathcal{L}(H)$ are self-adjoint.  We in addition suppose that $s=0$,  $a>0$, $\varsigma_5<0$, $\varsigma_0,\varsigma_6\in H$, $\varsigma_7\in C([0,T],H)$ and that
$\varsigma_0(x)>\varsigma_2b$ and $\xi(x)>0$ for a.e.$x\in[a,b].$
From  the fact that $Fu=\int_a^bxu(x)dx\in[a,b]$ and $u(x)\geq0$ a.e. $x\in[a,b]$, we can deduce that the demand is positive, i.e.,
$X(t,x)>0$ for any $t\in[0,T]$ and a.e. $x\in[a,b]$. Then the payoffs can be rewritten as
$V=\int_0^T[\varsigma_3W_1X(t)+\varsigma_4W_1Fu+\varsigma_5W_1u+W_1\varsigma_6(t)-W_1\varsigma_7(t)]dt.$
The firms aim to find $\widehat{u} \in U$  such  that
$\langle \widehat{u},J(\widehat{u})\rangle_H=\max_{u\in U}\langle u,J(u)\rangle_H.$

We would like to discuss this problem in  two cases:
(I)  $\varsigma_1\neq0$ and $\frac{b^3-a^3}{3}\theta_1+\varsigma_5aT<0$, where $\theta_1=(\frac{\varsigma_2\varsigma_3}{\varsigma_1}+\varsigma_4)T-\frac{\varsigma_2\varsigma_3}{\varsigma^2_1}e^{\varsigma_1T}$; (II)  $\varsigma_1=0$ and $\frac{b^3-a^3}{3}\theta_2+\varsigma_5aT<0$, where $\theta_2=\varsigma_4T-\frac{\varsigma_2\varsigma_3}{2}T^2$.

In  case (I), one can check that
$Y(t)=\frac{\varsigma_3}{\varsigma_1}(e^{\varsigma_1(T-t)}-1)W_1$, $P=2\theta_1 W_1F+2\varsigma_5TW_1$, and  
$Q=\int_0^TW_1(\varsigma_6(t)-\varsigma_7(t))dt+\frac{\varsigma_3}{\varsigma_1}(e^{\varsigma_1T}-1)W_1\xi+(\frac{\varsigma_3}{\varsigma^2_1}e^{\varsigma_1T}-\frac{\varsigma_3T}{\varsigma_1})W_1\varsigma_0$.
By similar arguments as in Example \ref{66}, one can show that the negative definite condition on $P$ is satisfied.  By Theorem \ref{65}, the MNE $\widehat{u}$ can be obtained by the iteration \eqref{53}, and both the MNE and the OWAP are stable.

In case (II), one can also check that
$Y(t)=\varsigma_3(T-t)W_1$, $ P=2\theta_2 W_1F+2\varsigma_5TW_1$, and 
$Q=\int_0^TW_1(\varsigma_6(t)-\varsigma_7(t))dt+\varsigma_3TW_1\xi+\frac{\varsigma_3}{2}T^2W_1\varsigma_0$,
and that the negative definite condition on $P$ is satisfied. By Theorem \ref{65}, the MNE $\widehat{u}$ can be obtained by the iteration formula \eqref{53}, and all of the NNE, the value function and the OWAP   inherit the property of stability.

Set the parameters as $[a,b]=[1,2]$, $s=\varsigma_1=0$, $\varsigma_2=0.5$, $\varsigma_3=2$, $\varsigma_4=\varsigma_5=-\frac{1}{2}$,  and $\varsigma_6=\varsigma_7=T=1$. By the iteration \eqref{53}, we can obtain   Figures \ref{Fig.5}-\ref{Fig.8}, which demonstrate the stability of the  NNE, the value function,    and the OWAP  with respect to variations in $(\varsigma_0,\xi)$ and the convergence of $\lambda$. These numerical results align  with the theoretical predictions provided by Theorem \ref{65}, further validating the robustness of the NNE,  the value function,   and the  OWAP under the specified conditions in the framework of ID-MDG, and indicating that an increase in demand may lead to an increase in payoffs.
\begin{figure}
  \centering
  \includegraphics[height=4.5cm, width=12cm]{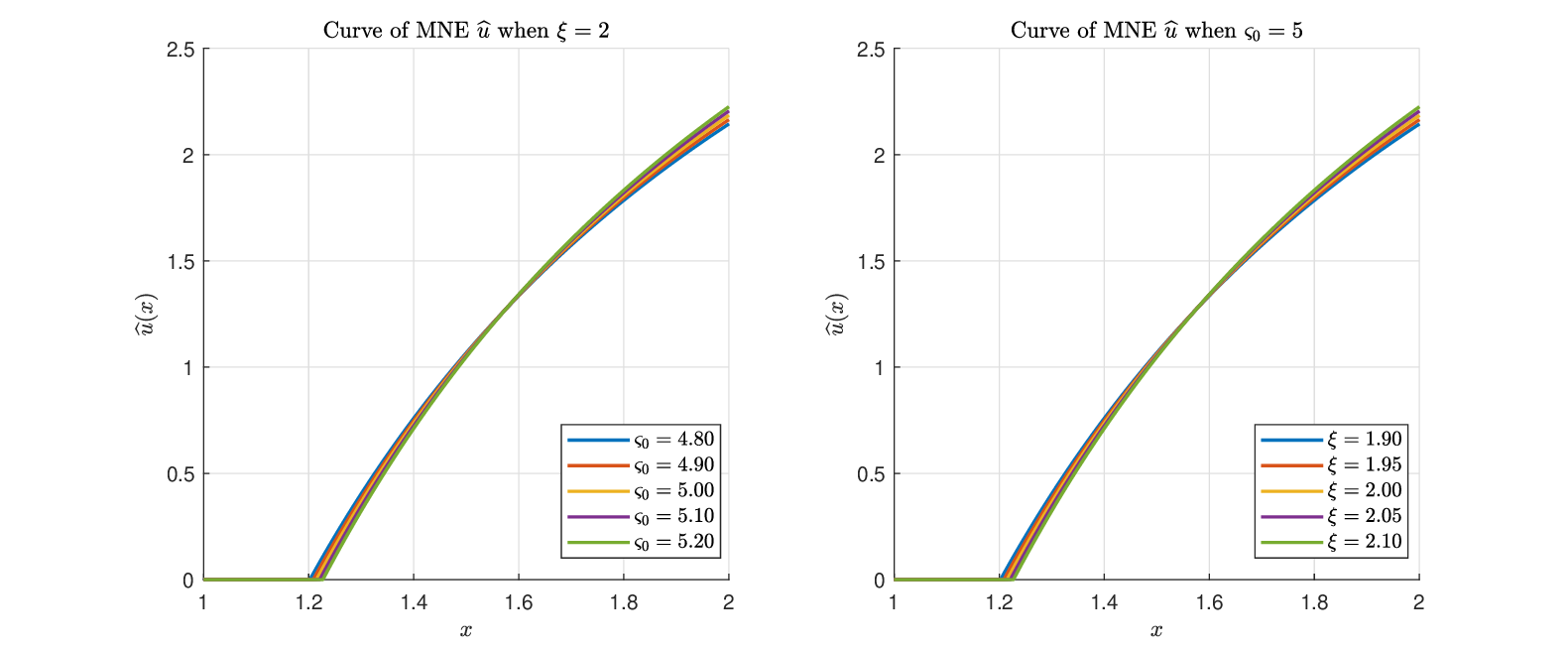}
  \caption{Stability of the NNE}\label{Fig.5}
\end{figure}

\begin{figure}
  \centering
  \includegraphics[height=4.5cm, width=12cm]{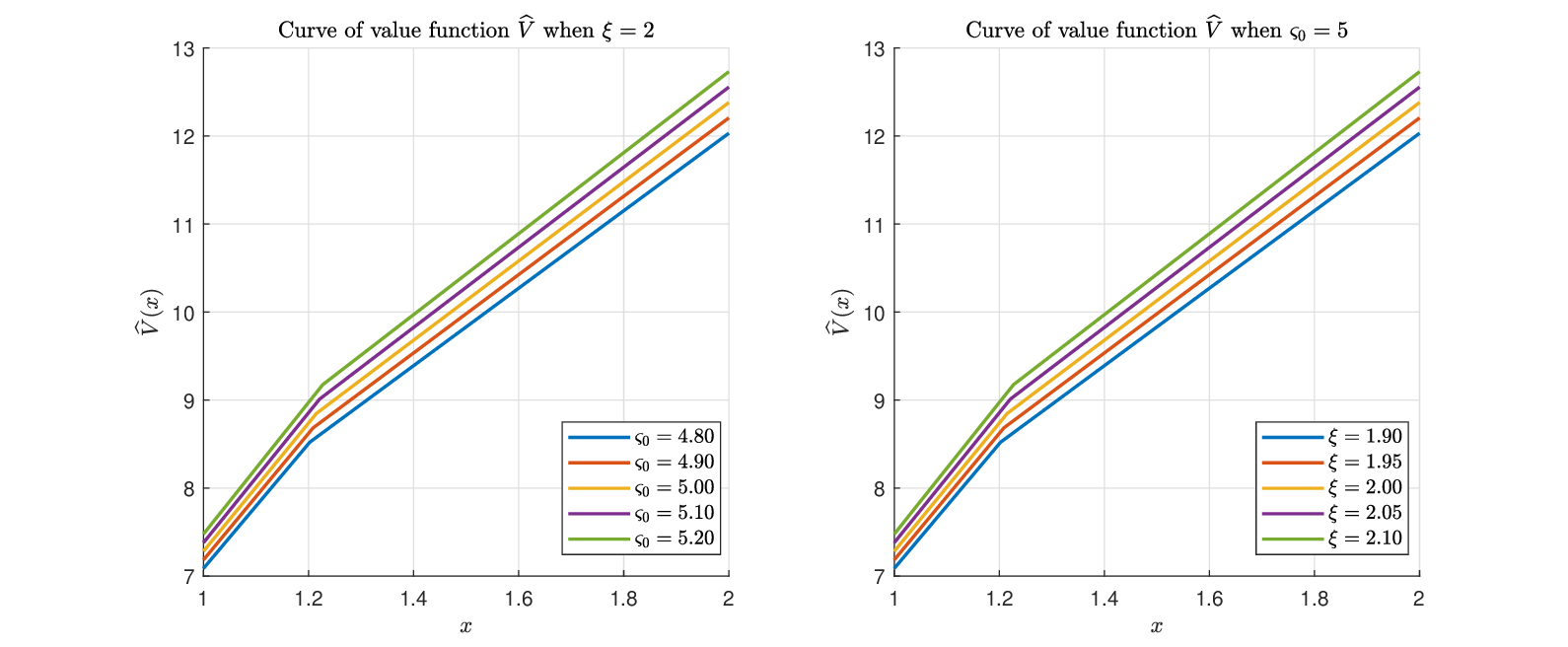}
  \caption{Stability of  the value function }\label{Fig.6}
\end{figure}

\begin{figure}
  \centering
  \includegraphics[height=4.5cm, width=12cm]{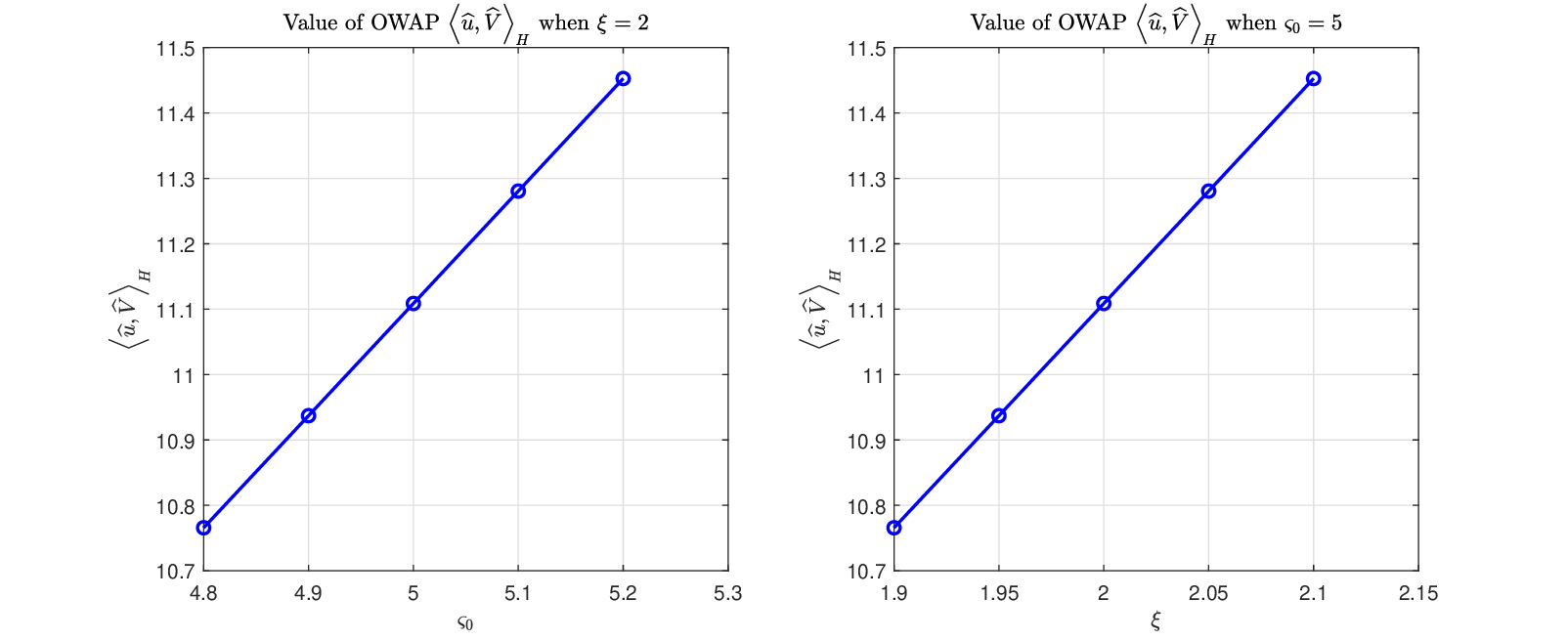}
  \caption{Stability of  the OWAP}\label{Fig.7}
\end{figure}
  
  \begin{figure}
  \centering
  \includegraphics[height=5cm, width=14cm]{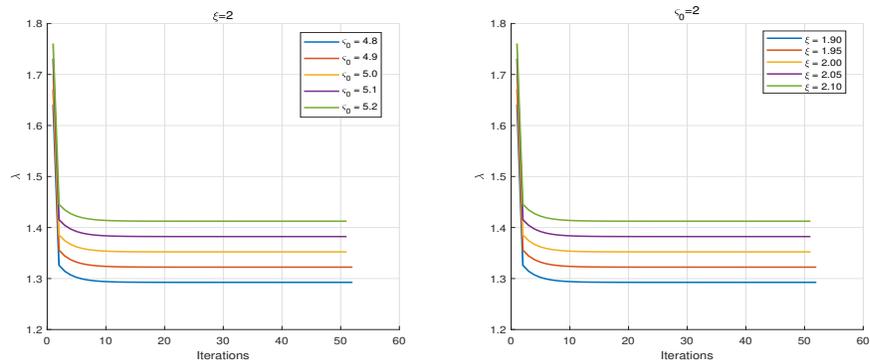}
  \caption{Convergence of $\lambda$}\label{Fig.8}
\end{figure}

\end{example}

\begin{remark}\label{77}
Examples \ref{67} and \ref{66} provide symmetry-dependent conditions involving the parameter $\varsigma_5$ and the term $\frac{b^3-a^3}{3}\theta_i+\varsigma_5aT$ $(i=1,2)$  that govern firms' strategies between competition and cooperation. Figure \ref{Fig.9} illustrates the following three distinct scenarios: (i) NNE: For $\varsigma_5=4$ and $(\varsigma_2,\varsigma_3,\varsigma_4)\in D_1\cup D_3$, firms adopt competitive strategies, resulting in a unique NNE within the ID-NDG framework; (ii) MNE: For $\varsigma_5=-4$ and $(\varsigma_2,\varsigma_3,\varsigma_4)\in D_2\cup D_3$,  cooperation emerges as the optimal strategy, with a unique MNE guaranteed under the ID-MDG framework; (iii) When  $\varsigma_5=-4$ and $(\varsigma_2,\varsigma_3,\varsigma_4)\in D_1$ (respectively,  $\varsigma_5=4$ and $(\varsigma_2,\varsigma_3,\varsigma_4)\in D_2$),   the NNE of ID-NDG (respectively, the MNE of ID-MDG) cannot be  certain to exist, as equilibrium existence fails in these parameter configurations.

Examples \ref{67} and \ref{66} also reveal critical limitations in the  specific parameter configurations of the price function: (i) When $\varsigma_5<0$ the price function may lose positivity due to the unbounded nature of  $u$; (ii) When $\varsigma_5=0$,  the operator $P=\theta_1 W_1F$ reduces to a non-negative definite form  in Example \ref{67} (respectively, non-positive definite form in Example \ref{66}), satisfying $\langle PZ,Z\rangle_{H^n}\geq0\mbox{ (respectively, $\leq0$)}$ for all $Z\in H^{n}$. This weaker definiteness invalidates the strict positive/negative definiteness assumptions required for equilibrium analysis. Thus the case $\varsigma_5=0$ raises two unresolved questions:  Do there exist  NNEs and MNEs to Examples \ref{67} and \ref{66}?  And what methodologies can reliably compute such equilibria when standard definiteness conditions fail? These two questions remain open and warrant dedicated theoretical investigation.

\begin{figure}
  \centering
  \includegraphics[height=5cm, width=12cm]{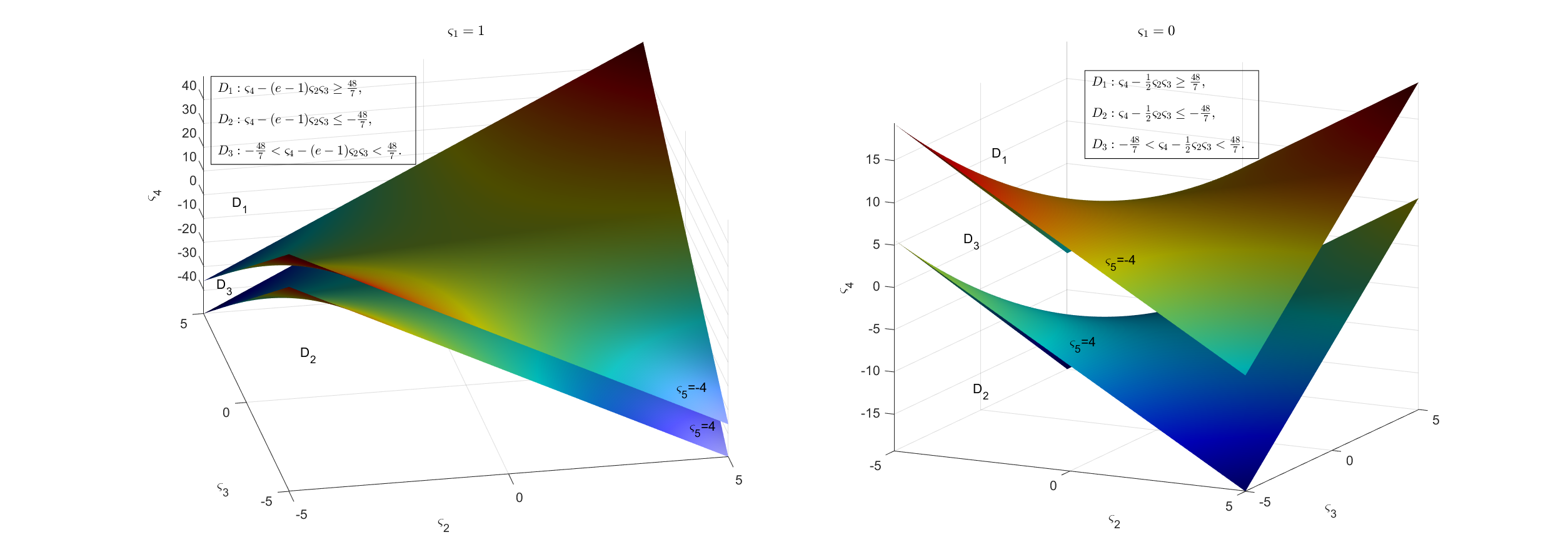}
  \caption{$[a,b]=[\frac{1}{2},1]$, $T=1$}\label{Fig.9}
\end{figure}
\end{remark}

\section{Piecewise Constant Density}
In this section, we consider the piecewise constant density with form $\widetilde{u}(\cdot)=\widetilde{u}_1\mathbb{I}_{[T_0,T_1]}(\cdot)+\sum_{m=2}^{l}\widetilde{u}_m\mathbb{I}_{(T_{m-1},T_{m}]}(\cdot),$ where $\widetilde{u}_m=(\widetilde{u}_{1m},\cdots,\widetilde{u}_{nm})^{\top}\in U^n$ ($m\in L$). For simplicity, denote $\widetilde{u}(\cdot)=\sum_{m\in L}\widetilde{u}_m\mathbb{I}_{(T_{m-1},T_{m}]}(\cdot)$. For coefficients in \eqref{1} and \eqref{9}, we assume that $\Pi\in\{A,f,\xi,\alpha,E,F,G\}$, $B_{ij}(\cdot)=\sum_{m=1}^{l}\widetilde{B}_{ijm}\mathbb{I}_{(T_{m-1},T_{m}]}(\cdot)$, and $\Pi_{i}(\cdot)=\sum_{m=1}^{l}\widetilde{\Pi}_{im}\mathbb{I}_{(T_{m-1},T_{m}]}(\cdot)$  for $i,j\in N$.  Here $\{\widetilde{A}_{im},\widetilde{B}_{ijm},\widetilde{E}_{im},\widetilde{F}_{im}\}_{m\in L}\subseteq C([T_{m-1},T_{m}],\mathcal{L}(H))$, $\{\widetilde{\xi}_{im},\widetilde{f}_{im},\widetilde{\alpha}_{im}\}_{m\in L}\subseteq H$ and $\{\widetilde{G}_{im}\}_{m\in L}\subseteq \mathcal{L}(H)$. Set $\widehat{\widetilde{V}}_m=(\widehat{\widetilde{V}}_{1m},\cdots,\widehat{\widetilde{V}}_{nm})^{\top}$ and
$$
\widehat{\widetilde{V}}_{im}=\widetilde{\alpha}_{im}+\int_{T_{m-1}}^{T_m}(\widetilde{E}_{im}(t)\widehat{\widetilde{X}}_{im}(t)+\widetilde{F}_{im}(t)\widehat{\widetilde{u}}_{im})dt+\widetilde{G}_{im}\widehat{\widetilde{X}}_{im}(T_i) \quad (i\in N,\;m\in L),
$$
where $(\widehat{\widetilde{X}}_{im},\widehat{\widetilde{u}}_{im})$ is the optimal pair satisfying \eqref{57} on time interval $(T_{m-1},T_{m}]$. Clearly, $\widehat{\widetilde{V}}_m\in H^n$ for any given $\widehat{\widetilde{u}}_m\in U^n$, and so  we can write $\widehat{\widetilde{V}}_{im}=\widetilde{J}_{im}(\widehat{\widetilde{u}}_{im})$ with $\widetilde{J}_{im}:H\rightarrow H$, i.e., the strategy $\widehat{\widetilde{u}}_{im}$ in the $i$-th market should be chosen based on the strategies $\widehat{\widetilde{u}}_{jm}$ from other $n-1$ markets.. And the WAP of the $i$-th market on the whole time interval $[s,T]$ can be given by $\sum_{m\in L}\langle\widetilde{u}_{im}, \widetilde{V}_{im}\rangle_{H}$. Set $\widehat{\widetilde{V}}_{im}=\widetilde{J}_{im}(\widehat{\widetilde{u}}_{im})$ for given $\widehat{\widetilde{u}}_{im}\in U^n$. Then, ID-NDG  and ID-MDG  can be specified as follows, respectively.

\begin{prob}\label{82}
Find the NNE $\widehat{\widetilde{u}}(\cdot)=\sum_{m\in L}\widehat{\widetilde{u}}_m\mathbb{I}_{(T_{m-1},T_{m}]}(\cdot)$  such  that
\begin{enumerate}[($\romannumeral1$)]
  \item For any $m\in L$, one has $\widehat{\widetilde{u}}_m=(\widehat{\widetilde{u}}_{1m},\cdots,\widehat{\widetilde{u}}_{nm})^{\top}\in U^n$;
  \item For any $i\in N$, any $m\in L$ and a.e. $x\in[a,b]$, one has
$\widehat{\widetilde{u}}_{im}(x)>0\Rightarrow\widehat{\widetilde{V}}_{im}(x)
\geq
\langle\widehat{\widetilde{V}}_{im},\widehat{\widetilde{u}}_{im}
\rangle_{H}.$
\end{enumerate}
\end{prob}

\begin{prob}\label{83}
Find the MNE   $\widehat{\widetilde{u}}(\cdot)=\sum_{m\in L}\widehat{\widetilde{u}}_m\mathbb{I}_{(T_{m-1},T_{m}]}(\cdot)$  such  that
\begin{enumerate}[($\romannumeral1$)]
  \item For any $m\in L$, one has $\widehat{\widetilde{u}}_m=(\widehat{\widetilde{u}}_{1m},\cdots,\widehat{\widetilde{u}}_{nm})^{\top}\in U^n$;
\item  For any $i\in N$, one has
$\sum_{m\in L}\langle \widehat{\widetilde{u}}_{im},\widehat{\widetilde{V}}_{im}\rangle_H=\max_{\widetilde{u}(\cdot)}[\sum_{m\in L}\langle\widetilde{u}_{im}, \widetilde{V}_{im}\rangle_{H}]$,
where $\widetilde{V}_{im}=\widetilde{J}_{im}(\widetilde{u}_{im})$.
\end{enumerate}
\end{prob}

Under mild conditions, the existence, uniqueness and stability of both the NNE of Problem \ref{82} and the MNE of Problem \ref{83} can be guaranteed because of the following reasons:
(i) Problem \ref{82}: for  time interval $[T_{m-1},T_{m}]$ with any given $m\in L$, the existence, uniqueness and stability of the density $\widehat{\widetilde{u}}_m$ can be obtained by similar arguments as in Section 3; (ii) Problem \ref{83}: for any given $i\in N$,   finding $\widehat{\widetilde{u}}(\cdot)$ to maximize the WAP  is equivalent to finding $\widehat{\widetilde{u}}_m$ to maximize $\langle\widetilde{u}_{im}, \widetilde{V}_{im}\rangle_{H}$ on time interval $[T_{m-1},T_{m}]$ for each $m\in L$, while each $\widehat{\widetilde{u}}_m$ can be solved by the results obtained in Section 4.

As a consequence, the results obtained in Sections 3-4 can be generalized to the case of piecewise  constant density.

\section{Conclusions}
In this paper, we propose the novel models of the ID-NDG and the ID-MDG. Concerning on constant density, with the help of variational analysis, we obtain the unique NNE of the ID-NDG and establish the stability of the NNE under positive definite condition. Complementarily,  we  obtain the unique MNE of the ID-MDG and establish the stability of the MNE under negative definite condition. The two  conditions with certain symmetry are crucial for strategic decision making regarding competitive versus cooperative market behaviors. Finally, the aforementioned results are generalized to the case of piecewise  constant density.

It is worth emphasizing that several promising extensions of our problem formulation emerge as significant directions for future research: (i) As highlighted in Remark \ref{77}, the case when $\varsigma_5=0$ requires special consideration in equilibrium analysis;  (ii) As pointed out in Remark \ref{80}, it would be interesting to consider the piecewise constant density in future works; (iii) Practical applications often require incorporating demand constraints, motivating the study of generalized affine constraints:
$$
\int_s^T\langle X_i(t),\varsigma_8(t)\rangle_Hdt+\langle u_i,\varsigma_9\rangle_H+\langle X_i(T),\varsigma_{10}\rangle_H\leq\varsigma_{11}, \quad\forall i\in N,
$$
where $\varsigma_8\in C([s,T],H)$, $\varsigma_9,\varsigma_{10}\in H$, $\varsigma_{11}\in \mathbb{R}$ are all given.  This extension would significantly expand the model's applicability to real-world resource allocation problems with constraints on  the variables. For instance,   total demand $\int_s^T\langle X_i(t),1\rangle_Hdt$,   total production quantities $(T-s)\int_a^b xu_i(x)dx$, and  terminal demand $\langle X_i(T),1\rangle_H$ should be confined within specific bounds to ensure operational feasibility and economic viability.

These research directions present theoretically rich and practically relevant extensions of our current framework. We intend to systematically address these challenges through continued investigation, developing corresponding analytical tools and computational methods to handle the increased mathematical complexity inherent in each extension.

\bibliographystyle{apalike}
\bibliography{References}

%\end{CJK}
\end{document}